\newcommand{\assign}{:=}
\newcommand{\divides}{\mathrel{|}}
\newcommand{\nequiv}{\mathrel{\not\equiv}}
\newcommand{\nospace}{}
\newcommand{\tmem}[1]{{\em #1\/}}
\newcommand{\tmname}[1]{\textsc{#1}}
\newcommand{\tmop}[1]{\ensuremath{\operatorname{#1}}}
\newcommand{\tmstrong}[1]{\textbf{#1}}
\newcommand{\tmtextbf}[1]{{\bfseries{#1}}}
\newcommand{\tmtextit}[1]{{\itshape{#1}}}
\newenvironment{enumeratealpha}{\begin{enumerate}[a{\textup{)}}] }{\end{enumerate}}
\newenvironment{enumerateroman}{\begin{enumerate}[{\textup{(}}i{\textup{)}}] }{\end{enumerate}}
\newtheorem{theorem}{Theorem}
\newtheorem{lemma}{Lemma}
\newtheorem{proposition}{Proposition}
\newtheorem{corollary}{Corollary}
\title[The {\L}oj.\ exponent of non-degenerate surface singularities]{The {\L}ojasiewicz exponent of non-degenerate surface singularities}
\author[S.\ Brzostowski, T.\ Krasi\'nski, and G.\ Oleksik]{
  Szymon Brzostowski,
  Tadeusz Krasi{\'n}ski,
  \and
  Grzegorz Oleksik
}
\date{April 8, 2020}
\begin{document}

\maketitle

\begin{abstract}
 {Let $f$ be an isolated singularity at the origin of
$\mathbb{C}^n$. One of many invariants that can be associated with $f$ is its
{\L}ojasiewicz exponent $\mathcal{L}_0 (f)$, which measures, to some extent,
the topology of $f$. We give, for generic surface singularities $f$, an
effective formula for $\mathcal{L}_0 (f)$ in terms of the Newton polyhedron of
$f$. This is a realization of one
of Arnold's postulates.}
\end{abstract}



\section{Introduction}

Let $f (z) = f (z_1, \ldots, z_n) \in \mathbb{C} \{z_1, \ldots, z_n \} = :
\mathcal{O}_n$ be a convergent power series defining an {\tmem{isolated
singularity }}at the origin $0 \in \mathbb{C}^n$, i.e. $f (0) = 0$ and the
gradient of $f$,
\[ \nabla f \assign \left( \frac{\partial f}{\partial z_1}, \ldots, \frac{\partial f}{\partial
z_n} \right) : (\mathbb{C}^n, 0) \rightarrow (\mathbb{C}^n, 0), \]
has an isolated zero at $0 \in \mathbb{C}^n$. The {\tmem{{\L}ojasiewicz
exponent }}$\mathcal{L}_0 (f)$ of $f$ is the smallest $\theta > 0$ such that
there exists a neighbourhood $U$ of $0 \in \mathbb{C}^n$ and a constant $C >
0$ such that
\[ | \nabla f (z) | \geqslant C | z |^{\theta} \text{ for } z \in U. \]
\begin{remark*}
  One can similarly define the {\L}ojasiewicz exponent{\tmem{ }}$\mathcal{L}_0
  (F)$ of any holomorphic mapping $F : (\mathbb{C}^n, 0) \rightarrow
  (\mathbb{C}^p, 0)$ having an isolated zero at $0 \in \mathbb{C}^n$.
\end{remark*}
It is known $\mathcal{L}_0 (f)$ is a rational positive number, it is an
analytic invariant of $f$, it depends only on the ideal $(\frac{\partial
f}{\partial z_1}, \ldots, \frac{\partial f}{\partial z_n})$ in
$\mathcal{O}_n$, $[\mathcal{L}_0 (f)] + 1$ is the degree of $C^0$-sufficiency
of $f$ and it can be calculated by means of analytic paths, i.e.
\begin{equation}
  \mathcal{L}_0 (f) = \sup_{\Phi}  \frac{\tmop{ord} (\nabla f \circ
  \Phi)}{\tmop{ord} \Phi}, \label{wyk_par}
\end{equation}
where $0 \neq \Phi = (\varphi_1, \ldots, \varphi_n) \in \mathbb{C} \{t\}^n$,
$\Phi (0) = 0$, and $\tmop{ord} \Phi \assign \min_i \tmop{ord} \varphi_i$. It
is an open and difficult problem if the {\L}ojasiewicz exponent is a
topological invariant. There are many
explicit formulas and estimations for $\mathcal{L}_0 (f)$ in various terms and
in special classes of singularities (see {\cite{KL77}}, {\cite{LT08}},
{\cite{Tei77a}}, {\cite{Brz15}}, {\cite{KOP09}}).

In the paper, we investigate the problem of determining the {\L}ojasiewicz
exponent for non-degenerate (in the Kushnirenko sense) singularities in terms
of their Newton polyhedron. This is an answer to V. Arnold's problem who
postulated, ``every interesting discrete invariant of a generic singularity
with Newton polyhedron $\Gamma$ is an interesting function of the polyhedron''
(Problem 1975-1, see also 1968-2, 1975-21 in {\cite{Arn04}}). The most famous
example of such an invariant is the Milnor number, which can be calculated
using the Kushnirenko formula in the case of non-degenerate singularities (see
{\cite{Kou76}}). Here, we completely solve the above Arnold's problem for the
{\L}ojasiewicz exponent in the class of non-degenerate surface singularities,
i.e.\ for $n = 3$. The case $n = 2$ was solved by Lenarcik in {\cite{Len98}}. Other
attempts to read off the {\L}ojasiewicz exponent from the Newton polyhedron of
a singularity were made by many authors (e.g.\ Lichtin {\cite{Lic81}}, Fukui
{\cite{Fuk91}}, {Bivi{\`a}-Ausina {\cite{Biv03}}},
Abderrahmane {\cite{Abd05}}, P.\ Mondal (private communication)). In
particular, see the recent paper by Oka {\cite{Oka18}}, who, in the
$n$-dimensional case, obtained estimations from above under
additional assumptions.

Our result is a generalization of the Lenarcik's one. Among segments of the
Newton polygon of $f$, he distinguished {\tmem{exceptional }}ones. The
{\L}ojasiewicz exponent, then, turns out to be the greatest coordinate of
intersections of the prolongations of the non-exceptional segments with the
coordinate axes. Our approach is similar: based on the definition, given by
the third-named author in {\cite{Ole13}}, we also distinguish exceptional
faces in the Newton polyhedron of a surface singularity. The greatest
coordinate of intersections of the prolongations of the non-exceptional
$2$-dimensional faces with the coordinate axes is exactly the {\L}ojasiewicz
exponent of the singularity.
The proof is based on the existence of polar curves of $f$ associated with
some faces of the Newton polyhedron of $f$.

\section{The main theorem}

Let $0 \neq f : (\mathbb{C}^n, 0) \rightarrow (\mathbb{C}, 0)$ be a
holomorphic function defined by a convergent power series $\sum_{\nu \in
\mathbb{N}^n} a_{\nu} z^{\nu}$, $z = (z_1, \ldots, z_n)$. Let $\mathbb{R}_+^n
\assign \{(x_1, \ldots, x_n) \in \mathbb{R}^n : x_i \geqslant 0, i = 1, \ldots,
n\}$. We define $\tmop{supp} f \assign \{\nu \in \mathbb{N}^n : a_{\nu} \neq
0\} \subset \mathbb{R}_+^n$. In the sequel, we will identify $\nu = (\nu_1,
\ldots, \nu_n) \in \tmop{supp} f$ with their associated monomials $z^{\nu} =
z_1^{\nu_1} \cdots z_n^{\nu_n}$. In particular, we will apply the geometric
language of the space $\mathbb{R}_+^n$ to polynomials, e.g. we may say $z
{}^{\nu} \in \tmop{supp} f$ or that two binomials are parallel (the latter
means segments in $\mathbb{R}_+^n$ associated to these binomials are
parallel). We define the {\tmem{Newton polyhedron}} $\Gamma_+ (f) \subset
\mathbb{R}_+^n$ {\tmem{of}} $f$ as the convex hull of $\{\nu +\mathbb{R}_+^n :
\nu \in \tmop{supp} f\}$. We say $f$ is {\tmem{convenient}} if $\Gamma_+ (f)$
has a non-empty intersection with each coordinate axis $0 x_i$ $(i = 1,
\ldots, n)$, and {\tmem{nearly convenient}} if, for each $i = 1, \ldots, n$,
in $\tmop{supp} f$ there is a monomial of the form $z_i^m$ or $z_i^m z_j$ with
$i \neq j$.
Let $\Gamma (f)$,
the {\tmem{Newton boundary of $f$}}, be the set of compact boundary faces of
$\Gamma_+ (f)$, of any dimension. Denote by $\Gamma^k (f)$ the set of all
$k$-dimensional faces of $\Gamma (f)$ $(k = 0, \ldots, n - 1)$. Then $\Gamma
(f) = \bigcup_k \hspace{0.17em} \Gamma^k (f)$. For each (compact) face $S \in
\Gamma (f)$ we define the quasihomogeneous polynomial $f_S \assign \sum_{\nu
\in S} a_{\nu} z^{\nu}$. We say $f$ is {\tmem{$\mathcal{K}\!\!$-non-degenerate on
$S$}} (i.e.\ {\tmem{non-degenerate in the Kushnirenko
sense on $S$}}) if the system of polynomial equations $\frac{\partial f_S}{\partial z_i}
= 0$ $(i = 1, \ldots, n)$ has no solutions in $(\mathbb{C}^{\ast})^n$; $f$ is
{\tmem{$\mathcal{K}\!\!$-non-degenerate}}
if $f$ is $\mathcal{K}\!\!$-non-degenerate on each face $S \in \Gamma
(f)$.

For each $(n - 1)$-dimensional face $S \in \Gamma^{n - 1} (f)$ there exists a
vector $\mathbf{v}_S = (v_1, \ldots, v_n) \in \mathbb{Q}_{> 0}^n$ with
positive rational coordinates, perpendicular to $S$. It is called a {\tmem{normal vector of}}
$S$ and it is unique up to positive scaling. Then $S = \{x \in \Gamma_+ (f)
: \langle \mathbf{v}_S, x \rangle = l_S \}$, where $l_S \assign \inf \{\langle
\mathbf{v}_S, x \rangle : x \in \Gamma_+ (f)\}$. The unique hyperplane $\Pi_S$
containing $S$ has the equation $\langle \mathbf{v}_S, x \rangle = l_S$ and it
is a supporting hyperplane of $\Gamma_+ (f)$. Since $\mathbf{v}_S$ has
positive coordinates, $\Pi_S$ intersects each coordinate axis $0 x_i$ $(i = 1,
\ldots, n)$ in a point whose distance from $0$ is equal to $m (S)_{x_i} \assign
\frac{l_S}{v_i} > 0$. We define
\[ m (S) \assign \max_i  \{ m (S)_{x_i} \} = \max_i  \left\{ \frac{l_S}{v_i}
   \right\} . \]
   On the other hand, if $\mathbf{v} \in \mathbb{Q}_{> 0}^n$, we define 
   $l_\mathbf{v} \assign \inf \{\langle
\mathbf{v}, x \rangle : x \in \Gamma_+ (f)\}$
and
$S_\mathbf{v} \assign \{x \in \Gamma_+ (f)
: \langle \mathbf{v}, x \rangle = l_\mathbf{v} \}$.
$S_\mathbf{v}$ is a face of $\Gamma(f)$
and $\Pi_\mathbf{v}\assign\{x \in \mathbb{R}^n
: \langle \mathbf{v}, x \rangle = l_\mathbf{v} \}$ is a \tmem{supporting hyperplane of\/ $\Gamma_+ (f)$ along $S_\mathbf{v}$}
(for short, $\mathbf{v}$ is called a \tmem{supporting vector along $S_\mathbf{v}$}).

We say $S \in \Gamma^{n - 1} (f)$ is {\tmem{exceptional with respect to the
axis}} $0 x_i$ if one of the partial derivatives $\frac{\partial f_S}{\partial
z_j}$, $j \neq i$, is a pure power of $z_i$. {Geometrically, this means $S$ is an
$(n - 1)$-dimensional pyramid with the
base lying in the $(n - 1)$-dimensional coordinate hyperplane $\{ x_j = 0 \}$, where $j \neq i$,
and with its apex lying at distance 1 from the axis $0 \nospace x_i$ in the
direction of $0x_j$;} see Figure \ref{fig_excep}. A face {\noindent}$S \in \Gamma^{n - 1}
(f)$ is {\tmem{exceptional }}if $S$ is exceptional with respect to some axis.
Let $\omega\in\{x,y,z\}$.
We denote the set of exceptional faces of $f$ with respect to $0\omega$ by $E_{\omega}(f)$. We denote the set of all exceptional faces of $f$ by $E (f)$.
 \filbreak

\begin{figure}[h!]
  \resizebox{!}{140pt}{\includegraphics{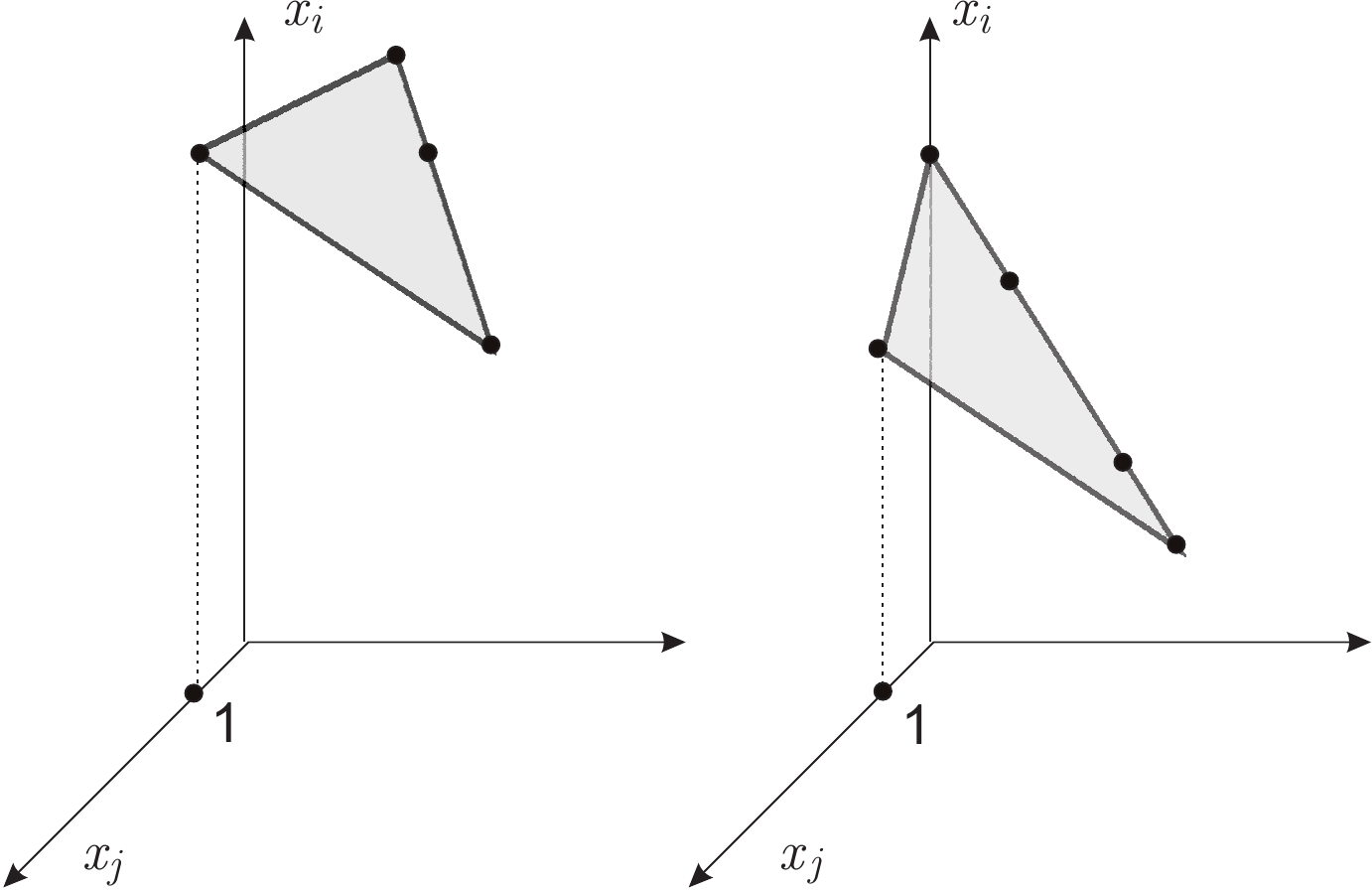}}
  \caption{\label{fig_excep}Exceptional faces with respect to the axis $0\nospace x_i$.}
	 
\end{figure}
\nopagebreak The main theorem is \nopagebreak[4]

 \nopagebreak\begin{theorem}
  \label{tw_main}If $f : (\mathbb{C}^3, 0) \rightarrow (\mathbb{C}, 0)$ is a
  $\mathcal{K}\!\!$-non-degenerate isolated surface singularity possessing non-exceptional
  faces, i.e.\/ $\Gamma^2 (f) \setminus E (f) \neq \varnothing$, then
  \begin{equation}
    \mathcal{L}_0 (f) = \max_{S \in \Gamma^2 (f) \setminus E (f)} m (S) - 1. \label{formula2}
  \end{equation}
\end{theorem}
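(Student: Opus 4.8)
The plan is to establish the two inequalities in \eqref{formula2} separately, using the analytic-path description \eqref{wyk_par} of $\mathcal{L}_0(f)$. To a path $0\neq\Phi=(\varphi_1,\varphi_2,\varphi_3)\in\mathbb{C}\{t\}^3$ with $\Phi(0)=0$ and all $\varphi_i\not\equiv 0$ I attach its initial-exponent vector $\mathbf{w}\assign(\tmop{ord}\varphi_1,\tmop{ord}\varphi_2,\tmop{ord}\varphi_3)\in\mathbb{Q}_{>0}^3$ and the vector $c\in(\mathbb{C}^{\ast})^3$ of leading coefficients of the $\varphi_i$. Then $\tmop{ord}\Phi=\min_i w_i$, $\mathbf{w}$ is a supporting vector along the face $S_{\mathbf{w}}$, and for every $i$ one has the elementary estimate
\[
  \tmop{ord}\Bigl(\tfrac{\partial f}{\partial z_i}\circ\Phi\Bigr)\geqslant l_{\mathbf{w}}-w_i,
\]
with equality as soon as $\bigl(\tfrac{\partial f_{S_{\mathbf{w}}}}{\partial z_i}\bigr)(c)\neq 0$. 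Two kinds of degenerate paths are dealt with by a preliminary reduction: paths lying in a coordinate hyperplane $\{z_i=0\}$ are treated through the restrictions $f|_{\{z_i=0\}}$ together with Lenarcik's two-variable formula \cite{Len98} (here one uses that $f$, being an isolated $\mathcal{K}\!\!$-non-degenerate singularity, is nearly convenient), and paths with $\dim S_{\mathbf{w}}\leqslant 1$ reduce to the case $\dim S_{\mathbf{w}}=2$. Hence it suffices to consider paths with $c\in(\mathbb{C}^{\ast})^3$ and $S\assign S_{\mathbf{w}}\in\Gamma^2(f)$; writing $\mathbf{w}=\lambda\mathbf{v}_S$ we then have $l_{\mathbf{w}}=\lambda l_S$ and $\min_i w_i=\lambda\min_i v_i$.

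For the upper bound, suppose first $S\notin E(f)$. Since $f$ is $\mathcal{K}\!\!$-non-degenerate on $S$, $\nabla f_S(c)\neq 0$, so $\bigl(\tfrac{\partial f_S}{\partial z_j}\bigr)(c)\neq 0$ for some $j$, whence
\[
  \frac{\tmop{ord}(\nabla f\circ\Phi)}{\tmop{ord}\Phi}\;\leqslant\;\frac{l_{\mathbf{w}}-w_j}{\min_i w_i}\;\leqslant\;\frac{l_{\mathbf{w}}-\min_i w_i}{\min_i w_i}\;=\;\frac{l_S}{\min_i v_i}-1\;=\;m(S)-1.
\]
If instead $S\in E(f)$, say exceptional with respect to $0x_i$, then some $\tfrac{\partial f_S}{\partial z_j}$ with $j\neq i$ is a non-zero pure power of $z_i$, so $\bigl(\tfrac{\partial f_S}{\partial z_j}\bigr)(c)\neq 0$ automatically and $\tmop{ord}(\tfrac{\partial f}{\partial z_j}\circ\Phi)=l_{\mathbf{w}}-w_j$; a combinatorial analysis of the pyramidal shape of $S$ (apex at distance $1$ from $0x_i$), combined with near-convenience, then shows that this forces $\tmop{ord}(\nabla f\circ\Phi)/\tmop{ord}\Phi\leqslant m(S')-1$ for some $S'\in\Gamma^2(f)\setminus E(f)$. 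Taking the supremum over $\Phi$ gives $\mathcal{L}_0(f)\leqslant\max_{S\in\Gamma^2(f)\setminus E(f)}m(S)-1$.

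For the lower bound one must exhibit, for each fixed $S\in\Gamma^2(f)\setminus E(f)$, a single path realising the value $m(S)-1$. Choose $i_0$ with $v_{i_0}=\min_i v_i$, so $m(S)=l_S/v_{i_0}$, and consider a branch $\Phi$ of the polar curve $\{\tfrac{\partial f}{\partial z_j}=0:\ j\neq i_0\}$ whose initial-exponent vector is proportional to $\mathbf{v}_S$; along such a branch $\nabla f\circ\Phi=\bigl(\tfrac{\partial f}{\partial z_{i_0}}\circ\Phi,0,0\bigr)$, and provided the leading coefficient vector of $\Phi$ avoids the zero set of $\tfrac{\partial f_S}{\partial z_{i_0}}$ one gets $\tmop{ord}(\nabla f\circ\Phi)=\lambda(l_S-v_{i_0})$ and $\tmop{ord}\Phi=\lambda v_{i_0}$, i.e.\ the ratio is exactly $m(S)-1$. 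The crux --- and the step I expect to be the main obstacle --- is proving that such a polar branch of $f$ exists for every non-exceptional $2$-face $S$. Picking a leading coefficient vector $c\in(\mathbb{C}^{\ast})^3$ with $\bigl(\tfrac{\partial f_S}{\partial z_j}\bigr)(c)=0$ for all $j$ with $v_j>v_{i_0}$ would suffice, but the variety of $f_S$ cut out by these equations may be contained in a coordinate hyperplane; it is precisely the non-exceptionality of $S$ that allows one to build the branch from the whole power series $f$ --- using monomials of $f$ on faces adjacent to $S$ and tracking orders carefully --- rather than from $f_S$ alone. This existence statement, packaged as a proposition on polar curves of $f$ associated to faces, combines with the two order computations above to yield \eqref{formula2}.
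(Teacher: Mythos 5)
Your skeleton is the right one (two inequalities, lower bound via polar branches, order computation $\tmop{ord}(\partial f/\partial z_i \circ \Phi) = l_{\mathbf{w}} - w_i$ under a nonvanishing condition), but both halves contain genuine gaps that the paper fills with machinery you have not supplied.

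For the upper bound $\mathcal{L}_0(f) \leqslant m(f) - 1$, you split into $S_{\mathbf{w}} \notin E(f)$ and $S_{\mathbf{w}} \in E(f)$ and in the second case appeal to ``a combinatorial analysis of the pyramidal shape of $S$ \dots combined with near-convenience''. That step is not routine: an exceptional face touches an axis at $m(S)_{x_i}$, which can be strictly larger than any $m(S')$ with $S' \notin E(f)$, so you must show that the path $\Phi$ with $S_{\mathbf{w}}$ exceptional actually achieves a \emph{smaller} ratio than the exceptional $m(S)-1$, and trace where that surplus comes from. Likewise the reductions you sketch at the start (paths lying in a coordinate hyperplane handled via Lenarcik's two-variable theorem; $\dim S_{\mathbf{w}} \leqslant 1$ reduced to $\dim S_{\mathbf{w}} = 2$) each need an argument. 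The paper does not re-prove this inequality at all; it cites the earlier result of \cite{Ole13}.

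For the lower bound, you aim to produce, for \emph{every} non-exceptional $2$-face $S$, a branch of the polar curve $\{\partial f / \partial z_j = 0 : j \neq i_0\}$ whose initial-exponent vector is \emph{exactly proportional to} $\mathbf{v}_S$, and you correctly flag this existence statement as the main obstacle --- but your sketch of how to overcome it (``build the branch from the whole power series \dots using monomials on faces adjacent to $S$'') does not match what actually works, and the stronger claim as stated is false. The paper's Example in Section~\ref{Wide-kreska} exhibits a $\mathcal{K}$-non-degenerate isolated singularity with a non-exceptional (indeed $0x$-proximate) face $S$ for which \emph{no} branch of $\{\partial f / \partial y = \partial f/\partial z = 0\}$ has initial-exponent vector proportional to $\mathbf{v}_S$: the associated Bernstein system is degenerate. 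The paper circumvents this with three ingredients you omit: first, a reduction via Theorem~\ref{Brzost} to the case $\tmop{supp} f = \Gamma^0(f)$ with generic coefficients (so $\mathcal{L}_0(f)$ only depends on $\Gamma(f)$); second, Lemma~\ref{le_geo} which shows the maximum of $m(S)$ over $\Gamma^2(f) \setminus E(f)$ is achieved on \emph{proximity faces}, whose geometry is controlled; and third, Theorem~\ref{key}, which does not give a branch supporting the whole face $S$ but only a branch whose initial-exponent vector supports $S$ \emph{or an edge of $S$ in $0xy$ or $0xz$ or the vertex of $S$ on $0x$}. The weaker conclusion still yields $m(S)_x - 1$ because the affine hull of that subface meets $0x$ at the same point as the plane of $S$. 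Without restricting to proximity faces and without relaxing ``supports $S$'' to ``supports a subface of $S$ meeting the axis'', the existence statement you need is simply not available.
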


\begin{comments*}
\begin{enumerate}
  \item \label{com_2}For $n = 3$, the case $\Gamma^2 (f) \setminus E (f) =
  \varnothing$ was established by the third-named author in {\cite[Theorem
  1.8]{Ole13}}. Namely, in this case, if we denote the variables in
  $\mathbb{C}^3$ by $x, y, z$ (and permute them if necessary), there is
  exactly one segment $S \in \Gamma^1 (f)$ joining the monomial $xy$ with some
  monomial $z^k$, where $k \geqslant 2$, and then $\mathcal{L}_0 (f) = k - 1$.
  \item The maximum in formula \eqref{formula2} may be calculated over the set of certain distinguished
non-exceptional faces (see Concluding remarks).
  \item Lenarcik in {\cite{Len98}} proved an alike formula for $n = 2$.
  Precisely,
  \[ \mathcal{L}_0 (f) = \left\{ \begin{array}{ll}
       \max_{S \in \Gamma^1 (f) \setminus E (f)} m (S) - 1, & \text{if }
       \Gamma^1 (f) \setminus E (f) \neq \varnothing\\
       1, & \text{if } \Gamma^1 (f) \setminus E (f) = \varnothing
     \end{array} \right. \!\!\!. \]
  \item Directly from the definitions, exceptional faces are always $\mathcal{K}\!\!$-non-degenerate.
\end{enumerate}
\end{comments*}

As an immediate consequence of Theorem \ref{tw_main}, we get the following strengthening of
the main result of \cite{Brz19} (see Theorem \ref{Brzost} in the next section).
\begin{corollary}
  \label{le_remove} Let $f, g : (\mathbb{C}^3, 0) \rightarrow (\mathbb{C}, 0)$ be two
  $\mathcal{K}\!\!$-non-degenerate isolated surface singularities at $0\in\mathbb{C}^3$. Assume
  that $\varnothing\neq\Gamma^2 (f)\setminus E(f)=\Gamma^2 (g) \setminus E(g)$.
  Then
  \[ \mathcal{L}_0 (f) =\mathcal{L}_0 (g) . \]
\end{corollary}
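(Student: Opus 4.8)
The plan is to deduce everything from Theorem \ref{tw_main}, splitting into two cases according to whether the common set $\Gamma^2(f)\setminus E(f)=\Gamma^2(g)\setminus E(g)$ is empty or not. Since by hypothesis this set is nonempty, we are squarely in the situation covered by the formula \eqref{formula2}. Thus I would first note that both $f$ and $g$ satisfy the hypotheses of Theorem \ref{tw_main}: they are $\mathcal{K}$-non-degenerate isolated surface singularities, and each possesses non-exceptional faces because $\Gamma^2(f)\setminus E(f)\neq\varnothing$ and likewise for $g$.

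The core of the argument is then that the right-hand side of \eqref{formula2} depends only on the set $\Gamma^2(\cdot)\setminus E(\cdot)$, since the quantity $m(S)$ attached to a $2$-dimensional face $S$ is a purely combinatorial invariant: it is determined by the supporting hyperplane $\Pi_S$, equivalently by the normal vector $\mathbf{v}_S$ and the value $l_S$, and hence by the affine plane spanned by $S$ (through its intersections with the coordinate axes at distances $m(S)_{x_i}=l_S/v_i$). In particular $m(S)$ is intrinsic to the face $S$ as a subset of $\mathbb{R}_+^3$ and does not reference $f$ at all beyond the fact that $S\in\Gamma^2(f)$. Therefore
\[
\max_{S\in\Gamma^2(f)\setminus E(f)} m(S)-1
=\max_{S\in\Gamma^2(g)\setminus E(g)} m(S)-1,
\]
the two maxima being taken over literally the same finite set of faces and with the same function $m$. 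Applying Theorem \ref{tw_main} to $f$ and to $g$ gives $\mathcal{L}_0(f)=\max_{S\in\Gamma^2(f)\setminus E(f)} m(S)-1$ and $\mathcal{L}_0(g)=\max_{S\in\Gamma^2(g)\setminus E(g)} m(S)-1$, and comparing these two equalities yields $\mathcal{L}_0(f)=\mathcal{L}_0(g)$, as desired.

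There is essentially no obstacle here, which is why the statement is labelled a corollary; the only point that deserves a line of care is the observation that $m(S)$ is well defined for a face $S$ independently of which singularity it is viewed as a face of — this follows immediately from the definitions of $\mathbf{v}_S$, $l_S$, and $m(S)$ recalled before the statement of the theorem, since $\mathbf{v}_S$ is unique up to positive scaling and $m(S)_{x_i}=l_S/v_i$ is scale-invariant. One should also remark that the displayed hypothesis forces both $f$ and $g$ to fall outside the exceptional regime of Comment \ref{com_2}, so no separate treatment of that degenerate case is needed. This completes the proof.
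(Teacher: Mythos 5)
Your proposal is correct and is exactly the argument the paper has in mind (the paper simply labels this an ``immediate consequence'' of Theorem \ref{tw_main} without spelling it out): both $f$ and $g$ fall under the non-exceptional case of \eqref{formula2}, and the right-hand side of that formula depends only on the set $\Gamma^2(\cdot)\setminus E(\cdot)$ together with the function $m(\cdot)$, which is intrinsic to a face $S$ as a subset of $\mathbb{R}^3_+$ (being determined by the affine plane it spans). Your side remark about $m(S)$ being well defined independently of the ambient singularity is the right point to flag, and indeed follows since $l_S=\langle\mathbf{v}_S,x_0\rangle$ for any $x_0\in S$, so $m(S)_{x_i}=l_S/v_i$ does not reference $\Gamma_+(f)$ beyond $S$ itself.
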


\section{Preliminaries}

A crucial r{\^o}le in the proof of the main theorem is played by the following
result proved by the first-named author, which can be intuitively rephrased by
saying that, in the non-degenerate case, $\mathcal{L}_0 (f)$ depends only on
the Newton polyhedron of $f$. This observation considerably simplifies the
family of singularities we need to consider.

\begin{theorem}[({\cite{Brz19}})]
  \label{Brzost}If $f, g : (\mathbb{C}^n, 0) \rightarrow (\mathbb{C}, 0)$ are
  $\mathcal{K}\!\!$-non-degenerate isolated singularities and $\Gamma (f) = \Gamma
  (g)$, then $\mathcal{L}_0 (f) =\mathcal{L}_0 (g)$.
\end{theorem}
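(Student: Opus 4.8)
The natural route is via the two facts recalled in the introduction: $\mathcal{L}_0(f)$ depends only on the Jacobian ideal $J(f) \assign (\partial f/\partial z_1, \ldots, \partial f/\partial z_n)$ --- in fact only on its integral closure $\overline{J(f)}$ --- and it is computed along analytic arcs by \eqref{wyk_par}. The plan is to show that, for a $\mathcal{K}$-non-degenerate isolated singularity $f$, the number $\sup_\Phi \tmop{ord}(\nabla f\circ\Phi)/\tmop{ord}\Phi$ depends on $\Gamma(f)$ only; applying this to $f$ and $g$ then gives $\mathcal{L}_0(f) = \mathcal{L}_0(g)$.

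It is convenient to reduce first to the case where $f$ is convenient. Adding to $f$ (and to $g$) the same monomial $z_i^N$ with $N$ sufficiently large preserves the hypotheses and the equality $\Gamma(f) = \Gamma(g)$, and it does not change the value of $\mathcal{L}_0$ --- a standard point, which one may justify through the fact that $z_i^{N-1}\in\overline{J(f)}$ as soon as $N-1\geqslant\mathcal{L}_0(f)$, so that $\overline{J(f+z_i^N)} = \overline{J(f)}$. After this reduction $\Gamma_+(f)$ is recovered from $\Gamma(f)$.

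The heart of the argument is the analysis of the extremal arcs in \eqref{wyk_par}. Since the supremum there is attained, there is an arc $\Phi$ realizing it, and such a $\Phi$ may be taken on a relative polar curve of $f$. Writing $\Phi$ in Newton--Puiseux form, the crucial point is that $\mathcal{K}$-non-degeneracy forces the possible leading exponents of the branches of this polar curve, and with them the orders $\tmop{ord}(\nabla f\circ\Phi)$ and $\tmop{ord}\Phi$ on each branch, to be determined by $\Gamma(f)$: non-degeneracy on the faces of $\Gamma(f)$ confines a branch to the cone dual to some face $S$, and there the leading terms of the partial derivatives are controlled by $f_S$ (the exceptional faces being precisely those on which this control degenerates in a way that can be discarded). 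Granting this, $\tmop{ord}(\nabla f\circ\Phi)/\tmop{ord}\Phi$, and hence its supremum $\mathcal{L}_0(f)$, is a function of $\Gamma(f)$ alone, and the theorem follows.

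The real obstacle is exactly this last step --- controlling the polar curve (equivalently, the extremal arcs of \eqref{wyk_par}) of a non-degenerate singularity in terms of its Newton polyhedron --- and it is the substance of \cite{Brz19}. The cruder guess, that $\overline{J(f)}$ equals the integral closure of the monomial ideal generated by the supports of the $\partial f/\partial z_i$, is false: for the $\mathcal{K}$-non-degenerate germ $f = x^2 + xy^3 + y^6$ that monomial ideal is $(x, y^3)$, yet $x\notin\overline{J(f)}$, and even $\mathcal{L}_0(f) = 5\neq 3 = \mathcal{L}_0\bigl((x, y^3)\bigr)$; so the non-degeneracy hypothesis must be used through the analytic structure of the polar locus rather than through a monomial surrogate for $J(f)$. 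For $n = 2$ this analysis is classical --- it is Teissier's description of $\mathcal{L}_0$ via the polar curve combined with Newton--Puiseux expansions --- and the content of \cite{Brz19} is to carry it through in arbitrary dimension.
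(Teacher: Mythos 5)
First, a factual point: the paper does not prove Theorem~\ref{Brzost} at all --- it cites it from \cite{Brz19} and, immediately after the statement, attributes it to ``general facts proved independently by various authors,'' namely Damon--Gaffney \cite{DG83}, Yoshinaga \cite{Yos89}, and Oka \cite[Theorem 5.3]{Oka97}. So there is no in-paper proof to compare your sketch against; the comparison has to be with the cited literature.

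As for the sketch itself, you flag the gap honestly, but it remains a gap. Your ``crucial point'' --- that $\mathcal{K}$-non-degeneracy confines the branches of the polar curve to the dual cones of the faces of $\Gamma(f)$ and forces the orders $\tmop{ord}(\nabla f\circ\Phi)$, $\tmop{ord}\Phi$ along those branches to be determined by $\Gamma(f)$ alone --- is, once one knows $\mathcal{L}_0$ is computed on the polar curve, just a rephrasing of the statement to be proved; ``granting this'' is granting the theorem. The mechanism that actually establishes it in the cited references is of a different kind: a triviality result for families with constant Newton boundary. One joins $f$ to $g$ by a path $f_t$ of $\mathcal{K}$-non-degenerate germs with $\Gamma(f_t)$ constant and shows (Damon--Gaffney/Yoshinaga/Oka) that the relevant structure --- in effect the integral closure of the gradient ideal $\overline{(\partial f_t/\partial z_1,\dots,\partial f_t/\partial z_n)}$ --- does not vary along the path, so neither does $\mathcal{L}_0(f_t)$. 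Your polar-curve/Newton--Puiseux route is a legitimate alternative strategy (it is close in spirit to what the present paper actually carries out for $n=3$ in Sections 4--6, with considerable extra machinery), but as written it is a programme rather than a proof. The preliminary steps you do carry out --- the reduction to convenient $f$ by adding $z_i^N$, the dependence of $\mathcal{L}_0(f)$ only on the integral closure of the gradient ideal, and the observation, via $x^2+xy^3+y^6$, that a naive monomial surrogate for that integral closure fails --- are all correct, and the last is a useful sanity check; it just does not supply the missing argument.
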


This theorem, probably known to experts, follows from general facts proved
independently by various authors, to mention Damon and Gaffney {\cite{DG83}},
Yoshinaga {\cite{Yos89}}, or Oka {\cite[{Theorem 5.3}]{Oka97}}.

In the proof of the main theorem, we will apply the celebrated Bernstein
theorem on the number of solutions to systems of polynomial equations under
some non-degeneracy conditions (so-called Bernstein non-degeneracy). To
formulate this theorem, we need to introduce some notions. Let $f \neq 0$ be a
polynomial in $n$ variables $z_1, \ldots, z_n$. For each vector $0 \neq
\mathbf{v} \in \mathbb{Z}^n$ we denote by $f_{\mathbf{v}}$ the {\tmem{initial part of
$f$}} (the part of the least degree) with respect to the weights of variables
given by the coordinates of $\mathbf{v}$. The {\tmem{Newton polytope}}
$\mathcal{N} (f)$ {\tmem{of}} $f$ is defined to be the convex hull of
$\tmop{supp} f$. Then, the well-known formula holds: $\mathcal{N} (fg)
=\mathcal{N} (f) +\mathcal{N} (g)$, where the $+$ denotes the Minkowski sum of
subsets of $\mathbb{R}^n$. The {\tmem{mixed volume of a system $(f_1, \ldots,
f_n)$ of non-zero polynomials}} (or {\tmem{of the Newton polytopes
$\mathcal{N} (f_1), \ldots, \mathcal{N} (f_n)$}}) is defined by the formula
\[ \tmop{MV} (f_1, \ldots, f_n) = \sum_{k = 1}^n (- 1)^{n - k} \sum_{I \subset
   \{1, \ldots, n\}, |I| = k} \tmop{vol}_n (\sum_{i \in I} \mathcal{N}(f_i)) .
\]
It is always a non-negative number. In the case $n = 2$, considered in this
paper, the above formula takes the simple form
\[ \tmop{MV} (f_1, f_2) = \tmop{vol}_2 (\mathcal{N}(f_1) +\mathcal{N}(f_2)) -
   \tmop{vol}_2 (\mathcal{N}(f_1)) - \tmop{vol}_2 (\mathcal{N}(f_2)) . \]
Using this formula, it is easy to characterize the pairs $(f_1, f_2)$ for
which $\tmop{MV} (f_1, f_2) = 0$ (a general such result one can find, e.g., in
{\cite[Theorem 5.1.7]{Sch93}}).

\begin{lemma}
  \label{lem}$\tmop{MV} (f_1, f_2) = 0$ if and only if either $\mathcal{N}
  (f_1)$ and $\mathcal{N} (f_2)$ are parallel segments or at least one of
  $\mathcal{N} (f_1)$, $\mathcal{N} (f_2)$ reduces to a point.
\end{lemma}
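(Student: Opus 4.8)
The statement is a special case of the known characterization of mixed-volume zero (cited to Schneider), so the plan is simply to give a short self-contained argument in the plane. I would use the two-dimensional formula
$\tmop{MV}(f_1,f_2)=\tmop{vol}_2(\mathcal{N}(f_1)+\mathcal{N}(f_2))-\tmop{vol}_2(\mathcal{N}(f_1))-\tmop{vol}_2(\mathcal{N}(f_2))$
together with the fact that $\mathcal{N}(f_1),\mathcal{N}(f_2)$ are polytopes (lattice polygons) in $\mathbb{R}^2$: each is either a point, a segment, or a genuine $2$-dimensional polygon.

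First I would dispose of the easy implication. If one of the polytopes, say $\mathcal{N}(f_2)$, is a point $p$, then $\mathcal{N}(f_1)+\mathcal{N}(f_2)$ is just the translate $\mathcal{N}(f_1)+p$, so all three area terms collapse and $\tmop{MV}(f_1,f_2)=\tmop{vol}_2(\mathcal{N}(f_1))-\tmop{vol}_2(\mathcal{N}(f_1))-0=0$. If both are parallel segments, their Minkowski sum is again a segment (contained in the common line direction), so every term is $0$ and the mixed volume vanishes. This proves the ``if'' direction.

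For the converse I would argue by contraposition: assume neither alternative holds and show $\tmop{MV}(f_1,f_2)>0$. Neither polytope is a point, so each is a segment or a $2$-dimensional polygon. If at least one, say $\mathcal{N}(f_1)$, is $2$-dimensional, then $\mathcal{N}(f_1)+\mathcal{N}(f_2)\supseteq \mathcal{N}(f_1)+q$ for some vertex $q$ of $\mathcal{N}(f_2)$ while also containing a translate of $\mathcal{N}(f_1)$ by another vertex of $\mathcal{N}(f_2)$; since $\mathcal{N}(f_2)$ is not a single point, these two translates are distinct and the Minkowski sum strictly contains a translate of $\mathcal{N}(f_1)$, forcing $\tmop{vol}_2(\mathcal{N}(f_1)+\mathcal{N}(f_2))>\tmop{vol}_2(\mathcal{N}(f_1))=\tmop{vol}_2(\mathcal{N}(f_1))+\tmop{vol}_2(\mathcal{N}(f_2))$ (the last equality because $\mathcal{N}(f_2)$, being a segment or polygon of smaller or equal dimension — here I must also treat the sub-case where $\mathcal{N}(f_2)$ is itself $2$-dimensional, where the strict inequality is even clearer). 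The remaining case is that both polytopes are segments that are not parallel; then they span $\mathbb{R}^2$ and their Minkowski sum is a (possibly degenerate) parallelogram of positive area $\tmop{vol}_2(\mathcal{N}(f_1)+\mathcal{N}(f_2))>0=\tmop{vol}_2(\mathcal{N}(f_1))+\tmop{vol}_2(\mathcal{N}(f_2))$, so again $\tmop{MV}(f_1,f_2)>0$.

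The only mildly delicate point is making the strict inequality $\tmop{vol}_2(A+B)>\tmop{vol}_2(A)$ rigorous when $A$ is a convex body and $B$ contains at least two points; this follows because $A+B$ contains two distinct translates $A+b_1$ and $A+b_2$ of $A$ whose union, by convexity of $A+B$, has area strictly larger than $\tmop{vol}_2(A)$ (for instance, $A+B$ contains the convex hull of $(A+b_1)\cup(A+b_2)$, which properly contains $A+b_1$). Since everything lives in the plane and the polytopes have at most three ``types,'' the whole argument is a short finite case check; alternatively, one may simply invoke \cite[Theorem 5.1.7]{Sch93} and omit the details. I would present the planar computation, as it keeps the paper self-contained.
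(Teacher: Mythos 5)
The paper offers no proof of this lemma at all: it simply remarks that the characterization ``is easy'' and refers the reader to \cite[Theorem~5.1.7]{Sch93}. Your self-contained planar case check is therefore a genuinely different (and more explicit) route, and the overall structure --- classify each Newton polytope in $\mathbb{R}^2$ as a point, a segment, or a $2$-dimensional polygon and perform a finite case check using $\tmop{MV}(f_1,f_2)=\tmop{vol}_2(\mathcal{N}(f_1)+\mathcal{N}(f_2))-\tmop{vol}_2(\mathcal{N}(f_1))-\tmop{vol}_2(\mathcal{N}(f_2))$ --- is exactly the right way to make the lemma elementary. The ``if'' direction and the case of two non-parallel segments are both handled cleanly.

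The one gap is in the sub-case where both $\mathcal{N}(f_1)$ and $\mathcal{N}(f_2)$ are $2$-dimensional. You flag this, but the argument you actually give --- that $A+B$ properly contains a translate of $A$ because it contains two distinct translates $A+b_1$, $A+b_2$ and is convex --- only yields $\tmop{vol}_2(A+B)>\tmop{vol}_2(A)$, which is not enough once $\tmop{vol}_2(B)>0$; the remark that ``the strict inequality is even clearer'' papers over a real missing step. The cleanest patch within your framework: when both polygons are $2$-dimensional, each contains edges in at least two non-parallel directions, so one can choose a segment $S_1\subseteq\mathcal{N}(f_1)$ and a segment $S_2\subseteq\mathcal{N}(f_2)$ that are not parallel; by monotonicity of the mixed volume under inclusion, $\tmop{MV}(f_1,f_2)\geq\tmop{MV}(S_1,S_2)>0$, where the last inequality is your already-settled non-parallel-segments case. (Alternatively, the Brunn--Minkowski inequality gives $\tmop{vol}_2(A+B)\geq\bigl(\tmop{vol}_2(A)^{1/2}+\tmop{vol}_2(B)^{1/2}\bigr)^2>\tmop{vol}_2(A)+\tmop{vol}_2(B)$ as soon as both areas are positive.) With either patch your proof is complete and could replace the paper's citation to Schneider.
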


The system $(f_1, \ldots, f_n)$ of polynomials is
{\tmem{$\mathcal{B}$-non-degenerate}} (i.e.\ is {\tmem{non-degenerate in the
Bernstein sense}}) if for each vector $0 \neq \mathbf{v} \in \mathbb{Z}^n$ the
system of equations $\{(f_1)_{\mathbf{v}} = 0, \ldots, (f_n)_{\mathbf{v}} =
0\}$ has no solution in $(\mathbb{C}^{\ast})^n$.

\begin{theorem}[({\cite{Ber75}})]
  \label{ber}Let $f_1, \ldots, f_n \in \mathbb{C} [z_1, \ldots, z_n]$ be
  polynomials. If the mixed volume $\tmop{MV} (f_1, \ldots, f_n)$ is positive
  and the system $(f_1, \ldots, f_n)$ is $\mathcal{B}$-non-degenerate, then
  the system of equations $\{f_1 = 0, \ldots, f_n = 0\}$ has
  only isolated solutions in $(\mathbb{C}^{\ast})^n$ and their number, counted
  with multiplicities, is equal to $\tmop{MV} (f_1, \ldots, f_n)$.

\end{theorem}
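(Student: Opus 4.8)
The plan is to deduce this Bernstein--Kushnirenko count from toric intersection theory: compactify $(\mathbb{C}^{\ast})^n$ to a smooth complete toric variety adapted to the Newton polytopes $\mathcal{N}(f_i)$, realize $\tmop{MV}(f_1,\ldots,f_n)$ as an intersection number of nef divisors there, and use $\mathcal{B}$-non-degeneracy to confine the whole common zero set to the open torus. Concretely, I would first fix a smooth complete fan $\Sigma$ in $\mathbb{R}^n$ refining the normal fan of the Minkowski sum $\mathcal{N}(f_1)+\cdots+\mathcal{N}(f_n)$ (such a $\Sigma$ exists by toric resolution of singularities), let $X=X_\Sigma$ be the associated smooth complete toric variety with dense torus $T=(\mathbb{C}^{\ast})^n$ and boundary stratified into orbits $O_\tau$ indexed by the cones $\tau\in\Sigma$, and for each $i$ take the globally generated (hence nef) $T$-invariant divisor $D_i$ whose polytope is $\mathcal{N}(f_i)$; after multiplication by a suitable monomial, $f_i$ becomes a global section $s_i\in H^0(X,\mathcal{O}_X(D_i))$ with $\overline{V(f_i)}\cap T=\{f_i=0\}$.

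Two facts then feed into the argument. The first is the toric dictionary: the restriction of $s_i$ to an orbit $O_\tau$ equals, up to a nowhere-vanishing monomial, the initial form $(f_i)_{\mathbf{v}}$ for any $\mathbf{v}$ in the relative interior of $\tau$, read as a Laurent polynomial on $O_\tau\cong(\mathbb{C}^{\ast})^{n-\dim\tau}$ --- this is well defined because $(f_i)_{\mathbf{v}}$ is invariant under the subtorus spanned by $\tau$. The second is numerical: Kushnirenko's volume formula gives $D^n=n!\,\tmop{vol}_n(P_D)$ for a nef toric divisor $D$ with polytope $P_D$, and polarizing the degree-$n$ form $\mathbf{a}\mapsto(\sum a_iD_i)^n=n!\,\tmop{vol}_n(\sum a_i\mathcal{N}(f_i))$ and comparing with the inclusion--exclusion formula defining $\tmop{MV}$ yields
\[ D_1\cdots D_n=\tmop{MV}(f_1,\ldots,f_n). \]

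Now I would run the geometry. By the dictionary, $\mathcal{B}$-non-degeneracy is precisely the statement that $\bigcap_i\overline{V(f_i)}$ meets no orbit $O_\tau$ with $\tau\neq\{0\}$; equivalently, the common zero locus in $X$ lies inside the affine open $T$. A closed subscheme of the complete variety $X$ that is contained in an affine open must be finite (a complete affine variety is $0$-dimensional), so the common zeros are isolated, and the full intersection number $D_1\cdots D_n$ on the smooth complete $X$ is exactly the sum of their local multiplicities --- all at points of $T$. Combined with the identity above, this gives
\[ \#\bigl(\{f_1=0,\ldots,f_n=0\}\cap(\mathbb{C}^{\ast})^n,\ \text{with multiplicity}\bigr)=D_1\cdots D_n=\tmop{MV}(f_1,\ldots,f_n), \]
and the hypothesis $\tmop{MV}>0$ makes this count positive, so solutions actually exist. (Without non-degeneracy the same set-up, moving the $s_i$ to generic sections of $|D_i|$ and using semicontinuity of intersection multiplicities, still gives the torus count as an upper bound $\le\tmop{MV}$.)

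The step I expect to be the main obstacle is the interface between non-degeneracy and the toric boundary: one must set up the dictionary carefully enough to see that $\mathcal{B}$-non-degeneracy is exactly the condition barring common zeros from drifting onto --- or diverging along --- the invariant divisors, and one must prove and correctly polarize Kushnirenko's volume formula; everything after that is standard intersection theory on a smooth complete variety. A toric-free alternative is Bernstein's original argument: deform the system along a generic one-parameter family $t\mapsto(f_1^{(t)},\ldots,f_n^{(t)})$, expand the solution branches as Puiseux series in $t$, use non-degeneracy to show that no branch leaves $(\mathbb{C}^{\ast})^n$ as $t\to0$ (so the multiplicity-weighted count is constant along the family), and evaluate that count by induction on $n$ from the facet subsystems --- the bookkeeping there being precisely the recursive characterization of mixed volume. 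Either way, the heart of the matter is the same: non-degeneracy excludes ``solutions at infinity on the toric boundary'', which is what upgrades the a priori inequality to the asserted equality.
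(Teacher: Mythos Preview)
Your sketch is a correct outline of the standard toric proof of Bernstein's theorem, and the Puiseux-deformation alternative you mention is essentially Bernstein's original argument in \cite{Ber75}. However, the paper does not prove this theorem at all: it is quoted as a known result from the literature (with attribution to Bernstein) and used as a black box in the proof of Theorem~\ref{tw_par}. So there is no ``paper's own proof'' to compare against --- your proposal goes well beyond what the paper supplies, and either of the two routes you describe would serve as an adequate justification of the cited statement.
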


\begin{remark*}
  In the original statement of the Bernstein theorem, $f_i$ may even be
  Laurent polynomials, i.e.\ elements of the ring $\mathbb{C} [z_1, z_1^{- 1},
  \ldots, z_n, z_n^{- 1}]$.
\end{remark*}

To prove the existence of appropriate paths in $(\mathbb{C}^3, 0)$ realizing
the {\L}ojasiewicz exponent, we will apply the following Maurer theorem, which
is a generalization of the classical Puiseux theorem from the $2$-dimensional
case to the $n$-dimensional one.

\begin{theorem}[({\cite{Mau80}})]
  \label{mau}Let $f_1, \ldots, f_{n - 1} \in \mathfrak{m}\mathcal{O}_n$ be
  holomorphic function germs defining an analytic curve in $(\mathbb{C}^n, 0)$,
  i.e.\ $\dim \{f_1 = \cdots = f_{n - 1} = 0\} = 1$. If\/ $\mathbf{v} \in
  \mathbb{N}_+^n$ is a vector with positive integer coordinates for which the
  initial part $g_{\mathbf{v}}$ of every $g \in (f_1, \ldots, f_{n - 1})
  \mathcal{O}_n$ is not a monomial,
  then there exists a parametrization $0 \neq \Phi = (\varphi_1, \ldots,
  \varphi_n) \in \mathbb{C} \{t\}^n$, $\Phi (0) = 0$, of a branch of\/ $\{f_1 =
  \cdots = f_{n - 1} = 0\}$ such that\/ $(\tmop{ord} \varphi_1, \ldots,
  \tmop{ord} \varphi_n)$ is an integer multiple of $\mathbf{v}$.
\end{theorem}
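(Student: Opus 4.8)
The statement above is the Maurer theorem, which I would prove as follows. Write $I \assign (f_1, \ldots, f_{n - 1}) \mathcal{O}_n$ and $C \assign \{f_1 = \cdots = f_{n - 1} = 0\}$, a curve germ at $0$. Because a substitution $t \mapsto t^m$ multiplies every $\tmop{ord} \varphi_i$ by $m$, and because $g_{\mathbf{v}} = g_{\lambda \mathbf{v}}$ for all $\lambda > 0$, I first reduce to $\gcd (v_1, \ldots, v_n) = 1$; it then suffices to produce a branch of $C$ with a parametrization whose order vector is parallel to $\mathbf{v}$, since such a vector is automatically a positive integer multiple of $\mathbf{v}$ (as $\gcd (v_i) = 1$ and $\Phi (0) = 0$ force all $\tmop{ord} \varphi_i \geqslant 1$).

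The next step is to rephrase the hypothesis via the initial ideal. Grading $\mathbb{C} [z_1, \ldots, z_n]$ by the positive weights $\mathbf{v}$, the ideal $\tmop{in}_{\mathbf{v}} (I) \assign (g_{\mathbf{v}} : g \in I)$ is homogeneous, and it contains a monomial if and only if $g_{\mathbf{v}}$ is a monomial for some $g \in I$: indeed, if $m = \sum_i h_i (g_i)_{\mathbf{v}}$ with the $h_i$ homogeneous and all summands of one weighted degree, then $\sum_i h_i g_i \in I$ has initial part $m$. A $\mathbf{v}$-homogeneous polynomial is a monomial precisely when it has no zero in $(\mathbb{C}^{\ast})^n$, so by the Nullstellensatz over the affine variety $(\mathbb{C}^{\ast})^n$ the assumption ``no $g_{\mathbf{v}}$ is a monomial'' becomes $V (\tmop{in}_{\mathbf{v}} (I)) \cap (\mathbb{C}^{\ast})^n \neq \varnothing$; fix a point $c$ in this set.

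The geometric heart is a toric chart adapted to $\mathbf{v}$. I would take a regular subdivision of $\mathbb{R}_+^n$ having $\mathbf{v}$ as the primitive generator of a ray, pick a maximal cone of it containing $\mathbf{v}$ among its generators, and pass to the corresponding affine chart $\pi : \mathbb{C}^n \rightarrow \mathbb{C}^n$ of the associated toric modification, with coordinates $t, u_2, \ldots, u_n$ upstairs and $z_i = t^{v_i} \prod_{j \geqslant 2} u_j^{a_{i j}}$. Its fibre over $0$ contains the divisor $\{t = 0\}$, inside which $T \assign \{t = 0, \ u_2 \cdots u_n \neq 0\}$ is a torus mapping onto the open orbit of the (weighted) exceptional divisor; one has $\pi^{\ast} f_j = t^{d_j} \hat{f}_j$, where $d_j$ is the $\mathbf{v}$-order of $f_j$ and $\hat{f}_j |_{\{t = 0\}}$ is the initial part $(f_j)_{\mathbf{v}}$ written in the new coordinates. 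The strict transform $\tilde{C}$ of $C$ lies in $\{\hat{f}_1 = \cdots = \hat{f}_{n - 1} = 0\}$ and, being the closure of $\pi^{- 1} (C \setminus 0)$, is not contained in $\{t = 0\}$; moreover $\pi |_{\tilde{C}} : \tilde{C} \rightarrow C$ is a proper birational morphism, hence a bijection on local branches, and a branch of $C$ has orders parallel to $\mathbf{v}$ exactly when its strict transform meets $T$.

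The crux --- and the step I expect to be the main obstacle --- is to show that $\tilde{C}$ actually meets $T$; equivalently, that $c$ (modulo the $\mathbb{C}^{\ast}$-action $z_i \mapsto s^{v_i} z_i$) is the limit of a genuine branch of $C$ and not an artefact of the degeneration. This is where one-dimensionality of $I$ is essential: the toric degeneration of $I$ to $\tmop{in}_{\mathbf{v}} (I)$ is flat, so $V (\tmop{in}_{\mathbf{v}} (I))$ is again one-dimensional and $V (\tmop{in}_{\mathbf{v}} (I)) \cap (\mathbb{C}^{\ast})^n$ is a non-empty finite union of one-dimensional $\mathbf{v}$-homogeneous orbits; a Newton--Puiseux / valuation analysis of how the branches of $C$ behave under $z_i \mapsto s^{- v_i} z_i$ as $s \rightarrow 0$ --- which for $n = 2$ is exactly the classical fact that the slopes of the Newton polygon of $f$ are the Puiseux slopes of its branches --- matches each such orbit with the limit of a branch whose parametrization orders are parallel to $\mathbf{v}$, and hence with a branch of $\tilde{C}$ through $T$; this step may instead be carried out by induction on $n$, with the Puiseux theorem as the base case. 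Once a point $\tilde{p} \in \tilde{C} \cap T$ is at hand, parametrize a branch of $\tilde{C}$ at $\tilde{p}$ by $\tau \mapsto (t (\tau), u_2 (\tau), \ldots, u_n (\tau))$: since $t \not\equiv 0$ on it, $\tmop{ord}_{\tau} t (\tau) = k$ for some $k \geqslant 1$, while each $u_j (\tau)$ is a unit (its value $u_j (0)$ is nonzero because $\tilde{p} \in T$), so $\varphi_i (\tau) \assign t (\tau)^{v_i} \prod_{j \geqslant 2} u_j (\tau)^{a_{i j}}$ satisfies $\tmop{ord}_{\tau} \varphi_i = k v_i$. Thus $(\tmop{ord} \varphi_1, \ldots, \tmop{ord} \varphi_n) = k \mathbf{v}$, and $\Phi = (\varphi_1, \ldots, \varphi_n)$, being the image of a branch of $\tilde{C}$, parametrizes a branch of $C$; undoing the initial reduction (substitute $\tau \mapsto \tau^{\gcd (v_i)}$) gives the claim for an arbitrary weight vector.
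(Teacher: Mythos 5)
The paper gives no proof of this statement: it is cited directly from Maurer's paper \cite{Mau80}, so there is no in-text argument to compare against. Your reconstruction follows what is essentially the standard toric route (and, as far as I can tell, the route Maurer himself takes), and most of it is sound: the reduction to primitive $\mathbf{v}$, the translation of the hypothesis into the non-emptiness of $V(\tmop{in}_{\mathbf{v}}(I)) \cap (\mathbb{C}^{\ast})^n$, the construction of a smooth toric chart having $\mathbf{v}$ as a ray, and the final parametrization step once a point $\tilde{p} \in \tilde{C} \cap T$ is in hand are all correct.

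However, there is a genuine gap at the step you yourself flag as the crux: showing that the strict transform $\tilde{C}$ actually meets the open torus orbit $T$ associated to $\mathbf{v}$, rather than hitting the exceptional locus only along lower-dimensional strata or along divisors corresponding to other rays of the subdivision. You offer two possible ways to fill it --- a flat-degeneration argument combined with ``a Newton--Puiseux / valuation analysis,'' or an induction on $n$ with Puiseux as the base case --- but you carry out neither. The flatness remark does correctly yield that $V(\tmop{in}_{\mathbf{v}}(I))$ is one-dimensional and hence that $V(\tmop{in}_{\mathbf{v}}(I)) \cap (\mathbb{C}^{\ast})^n$ is a finite union of one-dimensional $\mathbf{v}$-homogeneous orbits; but the assertion that each such orbit ``matches the limit of a branch whose parametrization orders are parallel to $\mathbf{v}$'' is precisely the content of the theorem and cannot be justified by appealing to the $n=2$ case by analogy. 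What is missing is an argument that a nonempty torus part of the set-theoretic flat limit forces a branch of $C$ with order vector proportional to $\mathbf{v}$ --- for instance, by showing that a point $c \in V(\tmop{in}_{\mathbf{v}}(I)) \cap (\mathbb{C}^{\ast})^n$ is approached by points $s^{-\mathbf{v}} \cdot \Phi_B(\tau(s))$ from a single branch $B$ as $s \to 0$, and deducing from the boundedness of every coordinate away from $0$ and $\infty$ that $\tmop{ord}\Phi_B$ is proportional to $\mathbf{v}$. Until that is done, what you have is an accurate outline rather than a proof.
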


Finally, the proof of the main theorem makes use of the following criterion
for a singularity to be isolated, proved by the first and third-named authors.

\begin{proposition}[({\cite[Thm.\ 3.1 and Prop.\ 2.6]{BO16}})]
  {}\label{BrOl} Assume that $f : (\mathbb{C}^3, 0) \rightarrow
  (\mathbb{C}, 0)$ is holomorphic, $\mathcal{K}\!\!$-non-degenerate, and\/ $\nabla f (0) = 0$.
  Then $f$ has an isolated singularity at\/ $0$ if, and only if, the following
  conditions hold:
  \begin{enumerate}
    \item \label{isol_3}$f$ is nearly convenient,
    \item \label{isol_4}$\tmop{supp} f$ has points in each of the coordinate
    planes $0 xy$, $0 xz$, and $0 yz$.
  \end{enumerate}

\end{proposition}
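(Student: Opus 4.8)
The plan is to prove the two implications separately; necessity of (1)--(2) is elementary and uses only $\nabla f(0)=0$, while sufficiency is where $\mathcal K\!$-non-degeneracy does the work. Throughout I would denote the variables by $x,y,z$, as in the statement, and abbreviate $\partial_x=\partial/\partial x$, etc.

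\emph{Necessity.} Suppose $f$ has an isolated singularity at $0$ but, say, $\tmop{supp} f$ avoids the plane $0xy=\{z=0\}$. Then $z\mid f$, so $f=zg$ and on $\{z=0\}$ one has $\nabla f=(0,0,g(x,y,0))$; since $\nabla f(0)=0$ forces $g(0)=0$, the germ $g(x,y,0)\in\mathbb C\{x,y\}$ is not a unit, so its zero set inside $\{z=0\}$ has dimension $\ge1$ and lies in $V(\nabla f)$, contradicting isolatedness. This gives the relevant case of (2); the other two planes are handled the same way. For (1), if $f$ fails near-convenience with respect to, say, $x$, then $\tmop{supp} f$ contains no monomial $x^m$, $x^my$ or $x^mz$, which is exactly to say that every monomial of $f$ is divisible by one of $y^2,yz,z^2$, i.e.\ $f\in(y,z)^2$. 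Then $\partial_xf\in(y,z)^2$ and $\partial_yf,\partial_zf\in(y,z)$, so the whole $x$-axis $\{y=z=0\}$ lies in $V(\nabla f)$ --- again a contradiction. Hence (1) holds.

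\emph{Sufficiency.} Assume $f$ is $\mathcal K\!$-non-degenerate, nearly convenient, and $\tmop{supp} f$ meets each coordinate plane. It is classical that if $0$ were not an isolated zero of $\nabla f$, then $V(\nabla f)$ would contain an analytic branch through $0$, parametrized by some $0\neq\Phi=(\varphi_1,\varphi_2,\varphi_3)\in\mathbb C\{t\}^3$, $\Phi(0)=0$, with $\partial_if\circ\Phi\equiv0$ for $i=1,2,3$. The mechanism I would use repeatedly is that, for a weight vector $\mathbf v$ built from the orders of the nonzero components of $\Phi$, the relation $\partial_if\circ\Phi\equiv0$ forces its lowest-order coefficient --- which equals $(\partial_if)_{\mathbf v}$ evaluated at the vector $\xi$ of leading coefficients of $\Phi$ --- to vanish, together with the fact that $(\partial_if)_{\mathbf v}=\partial_i f_{S_{\mathbf v}}$ whenever the latter is not identically $0$. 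I would then argue by cases on how many $\varphi_i$ vanish identically, permuting coordinates freely since all hypotheses are symmetric. If all three are nonzero, put $\mathbf v=(\tmop{ord}\varphi_1,\tmop{ord}\varphi_2,\tmop{ord}\varphi_3)\in\mathbb N_+^3$, so $\xi\in(\mathbb C^\ast)^3$, and for the compact face $S=S_{\mathbf v}$ one gets $\partial_i f_S(\xi)=0$ for every $i$ (trivially when that derivative is $\equiv0$), contradicting $\mathcal K\!$-non-degeneracy on $S$. If exactly two vanish, say $\varphi_2\equiv\varphi_3\equiv0$, then $\partial_if(\varphi_1(t),0,0)\equiv0$ for all $i$; but $\partial_xf(x,0,0)$, $\partial_yf(x,0,0)$, $\partial_zf(x,0,0)$ are the generating series of the coefficients in $f$ of $x^m$, of $x^my$, and of $x^mz$ respectively, and near-convenience with respect to $x$ says at least one of them is nonzero, hence does not vanish along $\varphi_1\not\equiv0$ --- a contradiction.

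The remaining, and principal, case is that exactly one component vanishes, say $\varphi_3\equiv0$, so the branch lies in $\{z=0\}$. Here I would pass to $\bar f(x,y):=f(x,y,0)$, which is nonzero by (2) and has vanishing $1$-jet, so $\Gamma(\bar f)$ is a genuine Newton boundary. Along $\psi:=(\varphi_1,\varphi_2)$, of order $\mathbf w\in\mathbb N_+^2$ and with leading coefficient $\eta\in(\mathbb C^\ast)^2$, the relations $\partial_x\bar f(\psi)=\partial_y\bar f(\psi)\equiv0$ give, by the same mechanism applied in two variables, $\partial_x\bar f_{S'}(\eta)=\partial_y\bar f_{S'}(\eta)=0$, where $S'$ is the $\mathbf w$-face of $\Gamma_+(\bar f)$. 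The key point is to recognize $S'$ as coming from a genuine compact face of $\Gamma(f)$: extending $\mathbf w$ to $\mathbf v=(w_1,w_2,N)$ with $N$ large, one checks that $\Gamma_+(f)\cap\{x_3=0\}=\Gamma_+(\bar f)\times\{0\}$ and that $S_{\mathbf v}=S'\times\{0\}$ with $f_{S_{\mathbf v}}=\bar f_{S'}$ (involving no $z$). Then $(\eta_1,\eta_2,1)\in(\mathbb C^\ast)^3$ solves $\partial_x f_{S_{\mathbf v}}=\partial_y f_{S_{\mathbf v}}=\partial_z f_{S_{\mathbf v}}=0$, contradicting $\mathcal K\!$-non-degeneracy on $S_{\mathbf v}$. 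Since every case yields a contradiction, $\nabla f$ has an isolated zero at $0$.

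\emph{Main obstacle.} Necessity and the first two sufficiency cases are routine bookkeeping; the delicate step is the last case, where one must convert the $2$-variable degeneracy of the restriction $f|_{z=0}$ into a $3$-variable degeneracy of $f$ along an honest compact face of $\Gamma(f)$, achieved by sending the weight of $z$ to infinity. This is precisely where condition (2) enters (it guarantees $f|_{z=0}\neq0$, so that the ambient face $S_{\mathbf v}$ is nonempty and lies in $\{z=0\}$), just as near-convenience is exactly what rules out a branch running along a coordinate axis.
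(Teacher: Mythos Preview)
The paper does not give its own proof of this proposition: it is quoted from \cite{BO16} (Theorem~3.1 and Proposition~2.6 there) and used as a black box, so there is nothing in the present paper to compare your argument against.

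That said, your argument is correct and is essentially the expected one. Necessity is straightforward, and for sufficiency the curve-selection plus case analysis on which $\varphi_i$ vanish is the standard route. The only step requiring care is your last case, and you handle it properly: condition~(2) ensures $\bar f=f(x,y,0)\neq 0$, the weight vector $(w_1,w_2,N)$ with $N\gg 0$ picks out exactly the face $S'\times\{0\}\subset\{x_3=0\}$ of $\Gamma(f)$, and since $f_{S_{\mathbf v}}$ is independent of $z$ you get $\partial_z f_{S_{\mathbf v}}\equiv 0$ for free, completing the contradiction with $\mathcal K$-non-degeneracy. One tiny remark: in the ``two components vanish'' case you implicitly use that a nonzero one-variable series cannot vanish along a nonzero arc $\varphi_1$, which is of course immediate.
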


\section{Existence of parametrizations for a face}\label{Wide-kreska}
Here and below, in the context of three variables, we shall denote by $(x, y,
z)$ both the coordinates in $\mathbb{C}^3$ (the domain of singularities) and
coordinates in $\mathbb{R}^3$ (the space for the Newton polyhedrons). It will always be clear from the context to which space they are referring to.
Moreover, we let
\begin{equation}\label{notacja}
\overline{g} (y, z) \assign g (1, y, z) \in
\mathbb{C} [y, z],
\end{equation}
for any polynomial $g \in \mathbb{C} [x, y, z]$. Then,
obviously, $\overline{\frac{\partial g}{\partial y}} = \frac{\partial
\overline{g}}{\partial y}$ and $\overline{\frac{\partial g}{\partial z}} =
\frac{\partial \overline{g}}{\partial z}$.\label{kreska}

The main result of this section is the following theorem, which assures, under
certain non-degeneracy conditions on 
a two-dimensional face of an isolated surface singularity,
that one can find a
parametrization of a branch of a polar curve 
whose \tmem{initial exponent vector} (i.e.\ its vector of orders) is
perpendicular to the 
aforementioned face.
\label{Dopisac ogólniejsze twierdzenie - dla podpierania}
This theorem may also be viewed as a generalization of the Newton-Puiseux theorem from $2$
to $3$ indeterminates in the case of polar curves.

\begin{theorem}
  \label{tw_par}Let $f : (\mathbb{C}^3, 0) \rightarrow (\mathbb{C}, 0)$ be an
  isolated singularity, and let $S$ be a $2$-dimensional face of\/ $\Gamma (f)$
  with a normal vector\/ $\mathbf{v} \in \mathbb{Q}^3_{> 0}$. Assume that
  $\tmop{MV} (\frac{\partial \overline{f_S}}{\partial y}, \frac{\partial
  \overline{f_S}}{\partial z}) > 0$ and the pair $(\frac{\partial
  \overline{f_S}}{\partial y}, \frac{\partial \overline{f_S}}{\partial z})$ is
  $\mathcal{B}$-non-degenerate. Assume, moreover, that $f$ is
  $\mathcal{K}\!\!$-non-degenerate on all the edges of $S$. Then there exists a
  parametrization\/ $0 \neq \Phi = (\varphi_1, \varphi_2, \varphi_3) \in \mathbb{C}
  \{t\}^3$, $\Phi (0) = 0$, of a branch of the curve $\{ \frac{\partial
  f}{\partial y} = 0, \frac{\partial f}{\partial z} = 0\}$ such that
  $(\tmop{ord} \varphi_1, \tmop{ord} \varphi_2, \tmop{ord} \varphi_3)$ is proportional to\/
  $\mathbf{v}$.
\end{theorem}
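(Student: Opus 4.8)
The plan is to reduce Theorem~\ref{tw_par} to Maurer's theorem (Theorem~\ref{mau}) applied to the pair $(\frac{\partial f}{\partial y},\frac{\partial f}{\partial z})$ in $\mathcal{O}_3$, with the candidate vector being the normal vector $\mathbf{v}$ of $S$ (rescaled to lie in $\mathbb{N}_+^3$). Two things must be checked before Maurer applies: first, that $\{\frac{\partial f}{\partial y}=\frac{\partial f}{\partial z}=0\}$ is genuinely a curve, i.e.\ has dimension $1$; and second, the nondegeneracy hypothesis of Maurer, namely that for \emph{every} $g$ in the ideal $(\frac{\partial f}{\partial y},\frac{\partial f}{\partial z})\mathcal{O}_3$, the initial part $g_{\mathbf{v}}$ (lowest $\mathbf{v}$-weight part) is not a monomial. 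The dimension-one claim is the easy half: since $f$ has an isolated singularity, $\nabla f$ has an isolated zero, so $\{\frac{\partial f}{\partial x}=\frac{\partial f}{\partial y}=\frac{\partial f}{\partial z}=0\}=\{0\}$, and dropping one of the three equations gives a set of dimension at most $1$; it is at least $1$ by Krull's principal ideal theorem (or because the polar curve is non-empty—one can also note that if it were $0$-dimensional the mixed-volume hypothesis would already force a contradiction). So the crux is the Maurer nondegeneracy condition.

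For that, I would first identify the $\mathbf{v}$-initial parts. Because $S$ is a face of $\Gamma(f)$ with normal $\mathbf{v}$, the $\mathbf{v}$-initial part of $f$ itself is exactly $f_S$ (up to the lower-order terms on sub-faces—more precisely $f_{\mathbf{v}}=f_S$ since $\mathbf{v}$ is normal to the top-dimensional face $S$). Differentiation interacts well with taking $\mathbf{v}$-initial parts: since $\partial/\partial y$ lowers $\mathbf{v}$-weight by $v_2$ uniformly, one gets $(\frac{\partial f}{\partial y})_{\mathbf{v}}=\frac{\partial f_S}{\partial y}$ \emph{provided} this derivative is non-zero, and similarly for $z$. (If, say, $\frac{\partial f_S}{\partial y}\equiv 0$ then $S$ would be a face not involving $y$, a degenerate situation one handles separately or rules out using $\mathrm{MV}>0$.) Now suppose toward a contradiction that some $g=A\frac{\partial f}{\partial y}+B\frac{\partial f}{\partial z}$ has monomial initial part $g_{\mathbf{v}}=c\,x^\alpha y^\beta z^\gamma$. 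Taking initial parts of the combination, $g_{\mathbf{v}}$ is a combination $\widetilde A\,\frac{\partial f_S}{\partial y}+\widetilde B\,\frac{\partial f_S}{\partial z}$ of the \emph{lower-weight} initial parts (being careful about cancellation: the set of possible initial forms of ideal members is the ideal generated in the associated graded ring by $\frac{\partial f_S}{\partial y},\frac{\partial f_S}{\partial z}$, plus possibly lower-weight contributions when the leading terms cancel). The point is that a monomial $x^\alpha y^\beta z^\gamma$ lying in such an ideal forces $\frac{\partial f_S}{\partial y}$ and $\frac{\partial f_S}{\partial z}$ to have a very constrained common structure.

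To make that constraint precise I would dehomogenize by setting $x=1$, which is exactly what the bar notation $\overline{f_S}(y,z)=f_S(1,y,z)$ is for: the quasihomogeneity of $f_S$ means no information is lost, and $\overline{\partial f_S/\partial y}=\partial\overline{f_S}/\partial y$, likewise for $z$. A monomial in the ideal $(\frac{\partial f_S}{\partial y},\frac{\partial f_S}{\partial z})$ of the graded ring then becomes a monomial $y^\beta z^\gamma$ in the ideal $(\partial\overline{f_S}/\partial y,\partial\overline{f_S}/\partial z)\subset\mathbb{C}[y,z]$, up to units—equivalently, the zero set of the pair $(\partial\overline{f_S}/\partial y,\partial\overline{f_S}/\partial z)$ in $(\mathbb{C}^*)^2$ is empty and moreover, by a Bernstein-type argument, this would force the mixed volume $\mathrm{MV}(\partial\overline{f_S}/\partial y,\partial\overline{f_S}/\partial z)$ to be zero. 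But that is precisely ruled out by hypothesis together with $\mathcal{B}$-nondegeneracy of the pair (via Theorem~\ref{ber}: positive mixed volume plus $\mathcal{B}$-nondegeneracy gives exactly $\mathrm{MV}$-many solutions in the torus, in particular at least one). The remaining wrinkle is the case of cancellation of leading forms in $g$, where $g_{\mathbf{v}}$ comes from strictly lower $\mathbf{v}$-weight than the generic value on the combination; there one restricts $f$ to a proper sub-face (an edge of $S$), where the hypothesis ``$f$ is $\mathcal{K}$-nondegenerate on all edges of $S$'' kicks in to again prevent a monomial initial part by the same torus-solvability argument. I expect this edge/cancellation bookkeeping to be the main obstacle: organizing the case analysis so that every possible source of a monomial initial part is traced back either to $\mathrm{MV}>0$/$\mathcal{B}$-nondegeneracy on $S$, or to $\mathcal{K}$-nondegeneracy on a lower-dimensional face, without gaps. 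Once Maurer's hypotheses are verified, the theorem gives a branch $\Phi$ of the polar curve with $(\mathrm{ord}\,\varphi_1,\mathrm{ord}\,\varphi_2,\mathrm{ord}\,\varphi_3)$ an integer multiple of $\mathbf{v}$, hence proportional to $\mathbf{v}$, as required.
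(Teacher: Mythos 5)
Your overall skeleton (reduce to Maurer's theorem, identify $f_{\mathbf{v}}=f_S$ and $(\partial f/\partial y)_{\mathbf{v}}=\partial f_S/\partial y$, use $\tmop{MV}>0$ plus $\mathcal{B}$-non-degeneracy plus Bernstein to rule out monomials in the ideal $\bigl(\tfrac{\partial f_S}{\partial y},\tfrac{\partial f_S}{\partial z}\bigr)$) is exactly the paper's route through the ``generic level'' of the argument. But the paragraph where you handle cancellation of leading forms is where the gap is, and you have in fact inverted the roles of the two remaining hypotheses.

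You propose that when the $\mathbf{v}$-leading parts of $A\,\partial f/\partial y + B\,\partial f/\partial z$ cancel, one ``restricts $f$ to a proper sub-face (an edge of $S$)'' and applies the $\mathcal{K}$-non-degeneracy on edges there by the same torus-solvability argument. That is not what happens: when the leading terms cancel, the next contribution comes from $\mathbf{v}$-homogeneous components of $\partial f/\partial y,\partial f/\partial z$ of \emph{higher} $\mathbf{v}$-weight than $f_S$, i.e.\ from terms of $f$ strictly beyond $S$ in the Newton polyhedron, not from terms on an edge of $S$. There is no natural restriction of the problem to a sub-face of $S$, and no version of the torus-solvability argument applies on such an edge. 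What is actually needed is an algebraic lifting lemma (the paper's Lemma~\ref{ret}): if $F_{\mathbf{v}}$ and $G_{\mathbf{v}}$ share no non-monomial factor \emph{and} $(F_{\mathbf{v}},G_{\mathbf{v}})$ contains no monomial, then $(F,G)$ contains no element with monomial $\mathbf{v}$-initial part. Its proof is a Koszul-type recursion: cancellation $A^{(\xi)}F^{(\xi)}+B^{(\xi)}G^{(\xi)}=0$, by coprimality, forces $A^{(\xi)}=P^{(0)}G^{(\xi)}$, $B^{(\xi)}=-P^{(0)}F^{(\xi)}$, and continuing inductively one discovers the would-be monomial already lies in $(F_{\mathbf{v}},G_{\mathbf{v}})$, a contradiction. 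The coprimality hypothesis is essential — without it the syzygy module of $(F_{\mathbf{v}},G_{\mathbf{v}})$ is larger than the Koszul syzygy and the recursion breaks down.

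And this is precisely where the $\mathcal{K}$-non-degeneracy on the edges of $S$ enters, in a role opposite to what you assign it: via the paper's Lemma~\ref{le_degeneracy}, one shows that if $\partial f_S/\partial y$ and $\partial f_S/\partial z$ had a common non-monomial factor, then $f_S$ would be $\mathcal{K}$-degenerate on some \emph{edge} of $S$ (essentially a three-variable analogue of the classical two-variable criterion, proved via Freudenburg's lemma). So the edge-non-degeneracy secures the coprimality hypothesis of the lifting lemma; it does not by itself kill a monomial initial form arising from cancellation. Without both of these ingredients named and proved — the coprimality of the initial derivatives and the lifting lemma that exploits it — the ``edge/cancellation bookkeeping'' you gesture at remains open, and the argument does not go through.
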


To prove this theorem, we need two lemmas. The first one is a generalization
of a well-known, two-dimensional criterion of the Kushnirenko non-degeneracy which
reads: {\tmem{if $F \in \mathbb{C} [x, y] \setminus \mathbb{C}$ is a
quasihomogeneous polynomial (with positive rational weights), then $F$ is
$\mathcal{K}\!\!$-degenerate if, and only if, its partial derivatives have a
common non-monomial factor (equivalently: $F$ has a multiple non-monomial
factor)}}.

\begin{lemma}
  \label{le_degeneracy}Let $F \in \mathbb{C} [x, y, z] \setminus \mathbb{C}$
  be a quasihomogeneous polynomial (with positive rational weights). If at
  least two its partial derivatives have a common non-monomial factor and\/
  $\dim \mathcal{N} (F) = 2$, then $F$ is $\mathcal{K}\!\!$-degenerate on an edge
  of $\mathcal{N} (F)$.\label{W n-wymiarze raczej sprzeczno�� z
  generyczno�ci�}
\end{lemma}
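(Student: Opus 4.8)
The plan is to extract from the hypothesis a common divisor of two of the partials of $F$ that actually occurs \emph{squared} in $F$, and then to push this square onto an edge of $\mathcal{N}(F)$, where it forces the three partials of the face polynomial to share a non-monomial factor. After relabelling the variables I may assume that $\partial F/\partial x$ and $\partial F/\partial y$ are the two partials having a common non-monomial factor. Write $\mathbf{w}\in\mathbb{Q}_{>0}^3$ for a weight vector of $F$; then $\operatorname{supp}F$ lies in a plane orthogonal to $\mathbf{w}$, and if $\partial F/\partial x$ were identically zero then $\operatorname{supp}F\subset\{x=0\}$ as well, forcing $\dim\mathcal{N}(F)\leqslant 1$ against our assumption, so $\partial F/\partial x$ and $\partial F/\partial y$ are nonzero. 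Let $P$ be an irreducible non-monomial factor of $\gcd(\partial F/\partial x,\partial F/\partial y)$. Being a divisor of the quasihomogeneous polynomial $\partial F/\partial x$, $P$ is quasihomogeneous for $\mathbf{w}$; and since $P$ is not a monomial while $\mathbf{w}$ has positive entries, $P$ must involve $x$ or $y$ (otherwise $\mathcal{N}(P)$ would be a segment parallel to the $z$-axis, which is not orthogonal to $\mathbf{w}$). I then claim $P^2\mid F$: writing $F=PH$ and using $P\mid\partial F/\partial x=(\partial P/\partial x)H+P\,\partial H/\partial x$, one gets $P\mid(\partial P/\partial x)H$; if $P$ involves $x$, then $P\nmid\partial P/\partial x$ for degree reasons and hence $P\mid H$, while if $P$ involves $y$ the same argument with $\partial F/\partial y$ applies. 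Thus $F=P^2Q$ with $Q$ a nonzero quasihomogeneous polynomial.

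Next I choose an integer weight $\mathbf{n}$ for which the initial part $P_{\mathbf{n}}$ is supported on a $1$-dimensional face $E_P$ of $\mathcal{N}(P)$. The polytope $\mathcal{N}(P)$ lies in a plane orthogonal to $\mathbf{w}$ and, $P$ being non-monomial, has dimension $1$ or $2$. If $\dim\mathcal{N}(P)=1$, I take any $\mathbf{n}$ orthogonal to the direction of the segment $\mathcal{N}(P)$ and not parallel to $\mathbf{w}$ (possible because $\mathbf{w}$ is itself orthogonal to that direction), so that $E_P=\mathcal{N}(P)$; if $\dim\mathcal{N}(P)=2$, I take $\mathbf{n}$ in the relative interior of the normal cone of an edge $E_P$ of the polygon $\mathcal{N}(P)$. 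In both cases $\mathbf{n}$ is not parallel to $\mathbf{w}$, since otherwise $\langle\mathbf{n},\cdot\rangle$ would be constant on all of $\mathcal{N}(P)$. Let $E$ be the face of $\mathcal{N}(F)$ picked out by $\mathbf{n}$, that is $E=\mathcal{N}(F_{\mathbf{n}})$. Because $\mathbf{n}\not\parallel\mathbf{w}$, the functional $\langle\mathbf{n},\cdot\rangle$ is non-constant on $\mathcal{N}(F)$, so $E$ is a proper face of the $2$-dimensional polytope $\mathcal{N}(F)$ and $\dim E\leqslant 1$. On the other hand, initial parts are multiplicative, so $F_{\mathbf{n}}=(P_{\mathbf{n}})^2Q_{\mathbf{n}}$, and therefore, using $\mathcal{N}(fg)=\mathcal{N}(f)+\mathcal{N}(g)$, $E=2E_P+\mathcal{N}(Q_{\mathbf{n}})$ contains a translate of the segment $2E_P$, whence $\dim E\geqslant 1$. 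Hence $E$ is an edge of $\mathcal{N}(F)$, and its face polynomial is $F_E=F_{\mathbf{n}}=R^2Q'$, where $R:=P_{\mathbf{n}}$ and $Q':=Q_{\mathbf{n}}$.

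Finally, the two endpoints of the edge $E_P$ are vertices of $\mathcal{N}(P)$, hence belong to $\operatorname{supp}P$, so $R$ is not a monomial. Differentiating $F_E=R^2Q'$ gives $\partial F_E/\partial w=R\bigl(2(\partial R/\partial w)\,Q'+R\,\partial Q'/\partial w\bigr)$ for $w\in\{x,y,z\}$, so the zero set $\{R=0\}$ is contained in the common zero locus of $\partial F_E/\partial x$, $\partial F_E/\partial y$, $\partial F_E/\partial z$. Since $R$ is a nonzero non-monomial polynomial, it has an irreducible non-monomial factor $R_0$, whose zero set is an irreducible hypersurface not contained in the union of the coordinate planes and therefore meets $(\mathbb{C}^{\ast})^3$; a fortiori $\{R=0\}$ meets $(\mathbb{C}^{\ast})^3$. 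Consequently the system $\partial F_E/\partial x=\partial F_E/\partial y=\partial F_E/\partial z=0$ has a solution in $(\mathbb{C}^{\ast})^3$, i.e.\ $F$ is $\mathcal{K}\!\!$-degenerate on the edge $E$ of $\mathcal{N}(F)$, which proves the lemma.

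I expect the two points requiring real care to be the divisibility claim $P^2\mid F$ and the face bookkeeping of the middle paragraph, where one must guarantee that $\mathbf{n}$ cuts out an honest edge of $\mathcal{N}(F)$ whose $P$-component is an honest edge --- not merely a vertex --- of $\mathcal{N}(P)$; it is precisely here that the hypothesis $\dim\mathcal{N}(F)=2$ is used, and the statement fails without it.
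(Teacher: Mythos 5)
The argument breaks down at the step ``writing $F=PH$'': you tacitly assume $P\mid F$, i.e.\ that a common irreducible factor of two partial derivatives of $F$ must also divide $F$ itself. This does not follow from the hypotheses and can fail. For example, take $F=(y^2-x^2)^2+z^4$: it is homogeneous, $\dim\mathcal{N}(F)=2$, and $\partial F/\partial x=-4x(y^2-x^2)$, $\partial F/\partial y=4y(y^2-x^2)$ share the irreducible non-monomial factor $P=y-x$, yet $F(x,x,z)=z^4\neq 0$, so $P\nmid F$ and your decomposition $F=P^2Q$ does not exist. Consequently the central identity $F_{\mathbf{n}}=(P_{\mathbf{n}})^2Q_{\mathbf{n}}$ fails for such $F$, and the remainder of the argument collapses.

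The correct version of your divisibility claim is the Freudenburg lemma, which is exactly what the paper invokes: after dehomogenizing by setting one variable to $1$, an irreducible $\overline{P}$ dividing both $\partial\overline{F}/\partial y$ and $\partial\overline{F}/\partial z$ gives $\overline{F}=\overline{P}^2R_0+c$ for some \emph{constant} $c$, hence $F=P^2R+cx^\alpha$ after rehomogenizing. Your proof is essentially correct in the case $c=0$ --- and there the edge-selection and the bound $\dim E=1$ are handled carefully and are sound --- but the case $c\neq 0$ is the substantive part of the lemma, not a formality. One must then find an edge of $\mathcal{N}(F)$ avoiding the vertex $x^\alpha$, so that the constant term does not enter the initial form on that edge; this is what forces the paper's case analysis on whether $\mathcal{N}(\overline{P})$ is a segment or a polygon. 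Your proof silently restricts to the easy case and so has a genuine gap.
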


In this lemma, as $F$ is quasihomogeneous, $\Gamma^2 (F)$ consists of only one $2$-dimensional face, say $S$.
Thus, $\Gamma^2 (F) = \{ S \}$, $\mathcal{N} (F) =
S$ and $F = F_S$. In the course of its proof, we find it convenient to project
the face $S$ to the plane $0 \nospace y \nospace z$. It is important to note
that, as $S$ has a normal vector with positive coordinates, this projection
$\tmop{pr}_{y, z}$, when restricted to $S$, is a bijection and respects the
edges of $S$: $T$ is an edge of S if, and only if, $\tmop{pr}_{y, z} (T)$ is
an edge of $\tmop{pr}_{y, z} (S)$.

\begin{proof*}[of Lemma~{\rm\ref{le_degeneracy}}]
  As above, we have $F = F_S$, where $S =\mathcal{N} (F)$. W may choose
  $\mathbf{u} = (u_x, u_y, u_z) \in \mathbb{N}_+^3$ perpendicular to $S$; then
  $F$ is quasihomogeneous with respect to the weights defined by $\mathbf{u}$.
  Up to renaming of the variables, we can write
  \[ \frac{\partial F}{\partial y} = PQ_1,\ \ \frac{\partial F}{\partial z} =
     PQ_2 \label{14} \]
  \text{in } $\mathbb{C} [x, y, z]$, where $P$ is not a monomial and $P, Q_1, Q_2 \in \mathbb{C} [x, y, z]$ are
  quasihomogeneous with respect to the same vector of weights $\mathbf{u}$.
  
 %
We will show that $F$ can be brought to a special form. First, we may assume
$F$ is homogeneous. In fact, if we define
 $\tilde{F} (x, y, z) \assign F
  (x^{u_x}, y^{u_y}, z^{u_z})$, then $\tilde{F}$ is a homogeneous polynomial,
  $\dim \mathcal{N} (\tilde{F}) = 2$, and the $\mathcal{K}\!\!$-non-degeneracies
  of $\tilde{F}$ on $\mathcal{N} (\tilde{F})$ and $F$ on $\mathcal{N}(F) = S$
  (and on their corresponding boundary edges) are equivalent. Moreover,
  $\frac{\partial \tilde{F}}{\partial y}$ and $\frac{\partial
  \tilde{F}}{\partial z}$ still have a non-monomial factor in common, viz., $P
  (x^{u_x}, y^{u_y}, z^{u_z})$.
  
  After this simplification, we return to the original notations. Thus, $F$ is
  homogeneous of a degree $\alpha > 0$, $\frac{\partial F}{\partial y} = P Q_1$,
  $\frac{\partial F}{\partial z} = PQ_2$ in $\mathbb{C} [x, y, z]$, $P, Q_1,
  Q_2$ are homogeneous, and $P$ is not a monomial. Putting $x = 1$, we get
  $\frac{\partial \overline{F}}{\partial y} = \overline{P} \, \overline{Q}_1$
  and $\frac{\partial \overline{F}}{\partial z} = \overline{P} \, 
  \overline{Q}_2$ in $\mathbb{C} [y, z]$. As $P$ is homogeneous and not a
  monomial, $\overline{P}$ is not a monomial, either. Exchanging
  $\overline{P}$ for its irreducible, non-monomial factor, we may additionally
  assume that $\overline{P}$ itself is irreducible in $\mathbb{C} [y, z]$.
  From the Freudenburg lemma ({\cite{Fre96}} or {\cite[Corollary
  3.2]{vdENT03}}), then, we infer that $\overline{F} = \overline{P}^2 R_0 +
  c$, where $R_0 \in \mathbb{C} [y, z]$ and $c \in \mathbb{C}$. Hence,
  \[ F (x, y, z) = x^{\alpha}  \overline{F} (\frac{y}{x}, \frac{z}{x}) = P^2
     (x, y, z) R (x, y, z) + cx^{\alpha}, \label{15} \]
  where the homogeneous $R \in \mathbb{C} [x, y, z]$ satisfies $\overline{R} =
  R_0$.
  
  Passing to the proof of the degeneracy of $F$, we may assume $c \neq 0$; for
  otherwise, the assertion is clear. Utilizing the form of $F$, we shall
  indicate an edge $T$ of $S$ for which $F_T$ has a double, non-monomial
  factor;
  then $F$ will be $\mathcal{K}\!\!$-degenerate on $T$. More
  precisely, we shall find an edge $T_0$ of $\mathcal{N} (P)$ whose supporting
  vector $\mathbf{v}$ defines an edge $T=T_\mathbf{v}$ of $S$ not passing through the point
  $x^{\alpha}$. Then $F_T = F_{\mathbf{v}} = (P^2 \cdot R)_{\mathbf{v}} =
  (P_{T_0})^2 \cdot R_{\mathbf{v}}$ and $F$ is $\mathcal{K}\!\!$-degenerate on $T
  \in \Gamma^1 (F)$. This goal can be achieved by projecting $S$ to the plane
  $0 \nospace y \nospace z$ and finding there an edge $\tilde{T}_0$ of
  $\mathcal{N} (\overline{P})$ whose supporting vector
  $\tilde{\mathbf{v}}$ defines an edge $\tilde{T}$ of $\overline{S}
  \assign \tmop{pr}_{y, z} (S)$ not passing through the point $(0, 0)$.
  Indeed, having such a $\tilde{T}$ and $\tilde{\mathbf{v}}$, it is enough
  to set $T \assign \tmop{pr}_{y, z}^{- 1} (\tilde{T}) \cap S$ and $\mathbf{v}
  \assign (0, \tilde{v}_1, \tilde{v}_2)$, and then the above calculation for
  $F_T$ applies.
  
  We have $\overline{F} \assign \overline{P}^2  \overline{R} + c$. As $S$ is a
  2-dimensional face, $\overline{S}$ is also $2$-dimensional. Similarly, as
  $P$ is not a monomial, $\mathcal{N} (\overline{P})$ is at least a segment. It is
  convenient to distinguish two possibilities on the shape of the Newton
  polytope $\mathcal{N} (\overline{P})$ of $\overline{P}$:
  
  {\underline{1st case}}: {\tmem{$\mathcal{N} (\overline{P})$ is a segment}}.
  Let $\mathbf{{\tmstrong{w}}} \in \mathbb{Z}^2 \backslash \{ 0 \}$ be a
  vector perpendicular to $\overline{P}$. If $\tmop{ord}_{\mathbf{w}} 
  (\overline{P}^2  \overline{R}) < 0$, then $\overline{F}_{\mathbf{w}} =
  (\overline{P}^2  \overline{R})_{\mathbf{w}} = \overline{P}^2 
  \overline{R}_w$. Hence, we may set $\tilde{T}_0 \assign \mathcal{N}
  (\overline{P})$ and $\tilde{\mathbf{v}} \assign \mathbf{w}$ in this
  case.  If $\tmop{ord}_{\mathbf{w}}  (\overline{P}^2  \overline{R}) \geqslant 0$,
  then we have $\tmop{ord}_{- \mathbf{w}}  (\overline{P}^2  \overline{R} + c)
  = - \deg_{\mathbf{w}} (\overline{P}^2  \overline{R} + c) \leqslant -
  \tmop{ord}_{\mathbf{w}} (\overline{P}^2  \overline{R} + c) \leqslant 0$. But then
  we must have $\tmop{ord}_{- \mathbf{w}}  (\overline{P}^2  \overline{R}) < 0$
  for, otherwise, $\overline{S}$ would be a segment. Now, as above, we may take
  $\tilde{T}_0 \assign \mathcal{N} (\overline{P})$ and $\tilde{\mathbf{v}}
  \assign - \mathbf{w}$.
  
  {\underline{2nd case}}: {\tmem{$\mathcal{N} (\overline{P})$ is a
  2-dimensional polygon}}. Let $T_1, \ldots, T_k$ be boundary edges of
  $\mathcal{N} (\overline{P})$ and $\mathbf{v}_1, \ldots, \mathbf{v}_k \in
  \mathbb{Z}^2 \backslash \{ 0 \}$, respectively, their (inward-pointing)
  normal vectors (so that $\overline{P}_{\mathbf{v}_i} = \overline{P}_{T_i}$); see Figure \ref{fig_minkowski} (a). These vectors also define the
  corresponding edges of $\mathcal{N} (\overline{P}^2  \overline{R})$, namely
  $\tilde{T}_i \assign \mathcal{N} \left( (\overline{P}^2 
  \overline{R})_{\mathbf{v}_i} \right)$ $(i = 1, \ldots, k)$. Clearly,
  $\tilde{T}_1, \ldots, \tilde{T}_k$ are parallel to $T_1, \ldots, T_k$,
  respectively. Since $\mathbf{v}_1, \ldots, \mathbf{v}_k$ is a collection of
  normal vectors of all the edges of a 2-dimensional convex polygon
  $\mathcal{N} (\overline{P})$ and the polygon $\mathcal{N} (\overline{P}^2 
  \overline{R})$ is also convex, the edges $\tilde{T}_1, \ldots, \tilde{T}_k$
  of $\mathcal{N} (\overline{P}^2  \overline{R})$ can be prolonged to form a
  convex polygon, too -- see Figure \ref{fig_minkowski} (b).
  
  \begin{figure}[h]
    \resizebox{314pt}{!}{\includegraphics{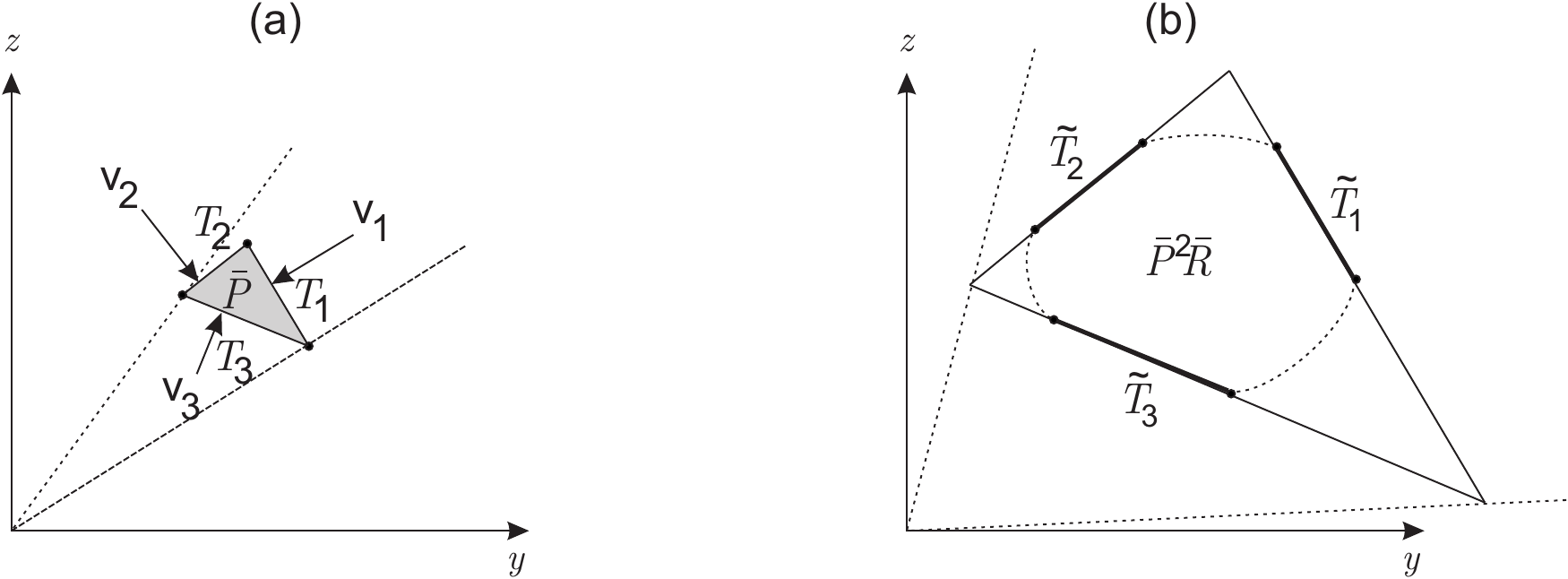}}
    \caption{\label{fig_minkowski}(a) The polygon $\mathcal{N}
    (\overline{P})$.\quad (b) The polygon $\mathcal{N} (\overline{P}^2 
    \overline{R})$ and prolongations of $\tilde{T}_i$.}
  \end{figure}
  
  But $\overline{S} =\mathcal{N} (\overline{P}^2  \overline{R} + c) =
  \tmop{conv} (\mathcal{N}(\overline{P}^2  \overline{R}) \cup \{(0, 0)\})$, so
  at least one of the edges $\tilde{T}_1, \ldots, \tilde{T}_k$ of $\mathcal{N}
  (\overline{P}^2  \overline{R})$ is also an edge of $\overline{S}$, one not
  passing through the point $(0, 0)$. Say it is $\tilde{T}_1$. Then we may set
  $\tilde{T} \assign \tilde{T}_1$ and $\tilde{\mathbf{v}} \assign
  \mathbf{v}_1$ in this case.\hspace*{\fill}\proofbox
\end{proof*}

\begin{remark*}
  It would be interesting to know if the above lemma holds, \tmem{mutatis mutandis}, in the
  $n$-dimensional case.
\end{remark*}

The second lemma is of algebraic nature.

\begin{lemma}
  \label{ret}Let $R$ be a unique factorization domain, $F, G \in R [[X_1,
  \ldots, X_n]] \setminus \{ 0 \}$ and\/ $\mathbf{v} \in \mathbb{N}_+^n$, where
  $n \geqslant 3$. Assume that $F_{\mathbf{v}}, G_{\mathbf{v}}$ have no common
  divisor in $R [X_1, \ldots, X_n]$ except monomials, and the ideal $\left(
  F_{\mathbf{v}}, G_{\mathbf{v}} \right)$ in $R [X_1, \ldots, X_n]$ does not
  contain any monomial. Then the ideal $(F, G) \subset R [[X_1, \ldots, X_n]]$
  does not contain any element $H$ such that $H_{\mathbf{v}}$ is a monomial.
\end{lemma}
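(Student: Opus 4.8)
The plan is to run a standard‑basis‑style reduction with respect to the $\mathbf v$‑adic valuation. For $0\neq P=\sum_\nu c_\nu X^\nu$ put $\operatorname{ord}_{\mathbf v}P:=\min\{\langle\mathbf v,\nu\rangle:c_\nu\neq0\}$, so that the initial form $P_{\mathbf v}$ is the sum of the terms of least $\mathbf v$‑weight; since $\mathbf v\in\mathbb N_+^n$, only finitely many monomials have a given $\mathbf v$‑weight, hence $P_{\mathbf v}\in R[X_1,\dots,X_n]$. Because $R$ (so also $R[X_1,\dots,X_n]$) is a domain, $(PQ)_{\mathbf v}=P_{\mathbf v}Q_{\mathbf v}$, and $\operatorname{ord}_{\mathbf v}(P+Q)=\min(\operatorname{ord}_{\mathbf v}P,\operatorname{ord}_{\mathbf v}Q)$ whenever the initial forms do not cancel. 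I shall also use that in the UFD $R[X_1,\dots,X_n]$ each $X_i$ is prime, so a product of two polynomials is a monomial only if both factors are; in particular if $F_{\mathbf v}$ or $G_{\mathbf v}$ were a monomial it would be a monomial in $(F_{\mathbf v},G_{\mathbf v})$, against the second hypothesis.

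Assume for contradiction that some $0\neq H\in(F,G)$ has $H_{\mathbf v}$ a monomial. Among all such $H$ and all representations $H=AF+BG$, choose one minimising the non‑negative integer $\delta:=\operatorname{ord}_{\mathbf v}H-\min(\operatorname{ord}_{\mathbf v}(AF),\operatorname{ord}_{\mathbf v}(BG))$ and, subject to that, minimising $\operatorname{ord}_{\mathbf v}H$; set $d:=\min(\operatorname{ord}_{\mathbf v}(AF),\operatorname{ord}_{\mathbf v}(BG))$. If $\delta=0$, the degree‑$d$ component of $AF+BG$ is $H_{\mathbf v}\neq0$, and it equals $A_{\mathbf v}F_{\mathbf v}$, or $B_{\mathbf v}G_{\mathbf v}$, or $A_{\mathbf v}F_{\mathbf v}+B_{\mathbf v}G_{\mathbf v}$ according to which of $\operatorname{ord}_{\mathbf v}(AF),\operatorname{ord}_{\mathbf v}(BG)$ is strictly smaller; in each case $H_{\mathbf v}\in(F_{\mathbf v},G_{\mathbf v})$ is a monomial, a contradiction. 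Hence $\delta>0$, which forces $\operatorname{ord}_{\mathbf v}(AF)=\operatorname{ord}_{\mathbf v}(BG)=d$ and $A_{\mathbf v}F_{\mathbf v}+B_{\mathbf v}G_{\mathbf v}=0$.

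Now the first hypothesis enters: $g:=\gcd(F_{\mathbf v},G_{\mathbf v})$ is a monomial, so $F_{\mathbf v}=gF'$, $G_{\mathbf v}=gG'$ with $\gcd(F',G')$ a unit, and $A_{\mathbf v}F'=-B_{\mathbf v}G'$ together with unique factorisation gives $A_{\mathbf v}=sG'$, $B_{\mathbf v}=-sF'$ for some $\mathbf v$‑homogeneous $s$. If $g\mid s$, say $s=g\sigma$, then $(A,B)\mapsto(A-\sigma G,\ B+\sigma F)$ leaves $H$ unchanged but annihilates the degree‑$d$ initial forms of both $AF$ and $BG$ (since $\sigma G_{\mathbf v}F_{\mathbf v}=A_{\mathbf v}F_{\mathbf v}$ and $\sigma F_{\mathbf v}G_{\mathbf v}=-B_{\mathbf v}G_{\mathbf v}$), thereby decreasing $\delta$ — impossible. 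So $g\nmid s$. Put $W:=G'F-F'G\in(F,G)$; since $G'F_{\mathbf v}=gF'G'=F'G_{\mathbf v}$ the leading terms of $G'F$ and $F'G$ cancel, whence $W=G'F_{>}-F'G_{>}$ with $F_{>}:=F-F_{\mathbf v}$, $G_{>}:=G-G_{\mathbf v}$, and — $F'$, $G'$ being $\mathbf v$‑homogeneous — every $\mathbf v$‑homogeneous component of $W$, in particular $W_{\mathbf v}$, lies in $(F',G')$. Also $sW=A_{\mathbf v}F+B_{\mathbf v}G$, so with $A_{>}:=A-A_{\mathbf v}$, $B_{>}:=B-B_{\mathbf v}$, $H_1:=A_{>}F+B_{>}G$ one gets $H=sW+H_1$, where $\operatorname{ord}_{\mathbf v}H_1>d$ and $\operatorname{ord}_{\mathbf v}(sW)>d$. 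If $\operatorname{ord}_{\mathbf v}(sW)<\operatorname{ord}_{\mathbf v}H_1$ then $H_{\mathbf v}=sW_{\mathbf v}$ is a monomial, so $W_{\mathbf v}$ is a monomial in $(F',G')$ and $gW_{\mathbf v}$ is a monomial in $(F_{\mathbf v},G_{\mathbf v})$, contradiction. If $\operatorname{ord}_{\mathbf v}(sW)>\operatorname{ord}_{\mathbf v}H_1$ then $(H_1)_{\mathbf v}=H_{\mathbf v}$ is a monomial with $\operatorname{ord}_{\mathbf v}H_1=\operatorname{ord}_{\mathbf v}H$, while $(A_{>},B_{>})$ represents $H_1$ with minimal order $>d$, so the $\delta$‑value of $H_1$ is $<\delta$, contradicting the minimal choice. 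The equal‑order case $\operatorname{ord}_{\mathbf v}(sW)=\operatorname{ord}_{\mathbf v}H_1$ is handled in the same spirit: one iterates the decomposition on $H_1$ (always with the same $W$, which depends only on $F_{\mathbf v},G_{\mathbf v}$) and uses completeness of $R[[X_1,\dots,X_n]]$ to write $H$ as (a series)$\cdot W$ plus an element of strictly larger order, or to reach a stage with $\delta=0$; in each branch a monomial is produced inside $(F',G')$, hence inside $(F_{\mathbf v},G_{\mathbf v})$.

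The genuine obstacle is precisely this leading‑cancellation case: when $g$ is a non‑constant monomial, the syzygy $(G',-F')$ of the initial forms need not lift to a syzygy of $(F,G)$, so one cannot simply massage the representation down to the situation $\delta=0$. Replacing the missing syzygy by the actual element $W=G'F-F'G$ — whose $\mathbf v$‑homogeneous parts all lie in $(F',G')$ — is what turns a hypothetical monomial initial form of $H$ into a forbidden monomial in $(F_{\mathbf v},G_{\mathbf v})$; choosing the well‑founded quantity $(\delta,\operatorname{ord}_{\mathbf v}H)$ and carrying out the bookkeeping of the equal‑order sub‑case is where the real care lies. (Note that for $n=2$ the two hypotheses can coexist only in degenerate configurations, so the restriction $n\geq3$ is just the range in which the statement has content.)
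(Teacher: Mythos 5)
Your overall strategy --- a syzygy‑elimination argument built on the element $W=G'F-F'G$, whose $\mathbf v$‑homogeneous components lie in $(F',G')$, together with a well‑founded induction on $(\delta,\operatorname{ord}_{\mathbf v}H)$ --- is morally the same as the paper's, and the cases $\delta=0$, $g\mid s$, and the two strict‑inequality sub‑cases of the dichotomy $\operatorname{ord}_{\mathbf v}(sW)\lessgtr\operatorname{ord}_{\mathbf v}H_1$ are sound. The paper's own proof is precisely a cleaned‑up version of this: after two harmless normalisations (multiplying $F,G$ by monomials to equalise $\operatorname{ord}_{\mathbf v}$, and a monomial substitution $X_i\mapsto\varrho^{v_i}X_i$ that makes $F_{\mathbf v},G_{\mathbf v}$ coprime, thus eliminating your $g\mid s$ fork), it constructs the iterated peel‑off explicitly as a \emph{finite} sequence $P^{(0)},\dots,P^{(t-2\xi-1)}$ of quasihomogeneous polynomials with $A^{(\xi+n)}=\sum_j P^{(j)}G^{(\xi+n-j)}$, $B^{(\xi+n)}=-\sum_j P^{(j)}F^{(\xi+n-j)}$, and at the terminal degree $t$ reads off $\omega\in(F_{\mathbf v},G_{\mathbf v})$ directly from a telescoping identity. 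No completeness or limiting argument is invoked.

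The genuine gap is exactly where you flag it, in the equal‑order case $\operatorname{ord}_{\mathbf v}(sW)=\operatorname{ord}_{\mathbf v}H_1=e$, and in particular in the sub‑case where the degree‑$e$ components of $sW$ and $H_1$ cancel, so that $h:=\operatorname{ord}_{\mathbf v}H>e$. Your plan ``iterate the decomposition and invoke completeness'' does not visibly close. The iteration does not alter $H$, so the well‑founded quantity $(\delta,\operatorname{ord}_{\mathbf v}H)$ you minimised cannot be re‑used to force termination. If the peeling stops at some step $k$ because the degree‑$d_k$ component of $H_k=A_kF+B_kG$ is nonzero, you know that $(H_k)_{\mathbf v}\in(F_{\mathbf v},G_{\mathbf v})$, but what you actually need is the degree‑$h$ component of $H_k$ (with $h>d_k$ possible), and that component is a sum of products $(A_k)^{(i)}F^{(h-i)}+(B_k)^{(i)}G^{(h-i)}$ involving the \emph{higher} quasihomogeneous pieces $F^{(\ell)},G^{(\ell)}$ with $\ell>\xi$; these are not in $(F',G')$, so ``a monomial is produced inside $(F',G')$'' does not follow. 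If the peeling never stops, you do get $H=\sigma W$ and hence $W_{\mathbf v}$ a monomial in $(F',G')$ --- but one must first show the never‑stops alternative is what actually occurs, which again requires the degree‑by‑degree bookkeeping you are eliding. This bookkeeping is the entire content of the paper's $P^{(j)}$‑recursion, and it is where the proof genuinely lives. Relatedly, your bracketing remark that for $n=2$ the hypotheses ``can coexist only in degenerate configurations'' is not correct; the argument, once completed, works for all $n\geqslant 2$, and the restriction $n\geqslant3$ in the statement is simply the range in which the lemma is later applied.
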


\begin{remark*}
  Here, just as everywhere else in the paper, we {\tmem{do not}} demand a
  monomial to be monic. However, the lemma is still valid, with the same
  proof, if we do impose this restriction in both the assumptions and in the
  assertion of the lemma (simultaneously).
\end{remark*}

\begin{proof*}[of Lemma~{\rm\ref{ret}}]
  First, let us introduce a piece of notation: let $\mathfrak{S} \assign R
  [[X_1, \ldots, X_n]]$ and, for an ideal $\mathcal{I} \subset \mathfrak{S}$,
  let $\tmop{in}_{\mathbf{v}} \mathcal{I} \assign \left\{ g_{\mathbf{v}} : g
  \in \mathcal{I} \right\} \mathfrak{S}$ be the $\mathbf{v}$-initial ideal of
  $\mathcal{I}$.
  
  To the contrary, say there exists a monomial $\omega \in
  \tmop{in}_{\mathbf{v}}  (F, G)$. Then
  \begin{equation}
    AF + BG = \omega + \text{h.o.t.}, \label{wz3}
  \end{equation}
  for some $A$, $B \in \mathfrak{S}$. Here, $A \neq 0 \neq B$ by the second
  assumption.
  
  As a first reduction, we may think $\tmop{ord}_{\mathbf{v}} F =
  \tmop{ord}_{\mathbf{v}} G$. For this, it is enough to consider $X^{\alpha}
  F$ and $X^{\beta} G$ instead of $F$ and $G$ (respectively), where
  $X^{\alpha}$ is a monomial appearing in $G_{\mathbf{v}}$ and $X^{\beta}$ is a
  monomial appearing in $F_{\mathbf{v}}$. Then, a relation of type {\eqref{wz3}} still holds
  with $\omega$ modified to $X^{\alpha + \beta} \omega$.
  
  Secondly, we may assume $F_{\mathbf{v}}$ and $G_{\mathbf{v}}$ to be
  co-prime. Indeed, write $F = F_{\mathbf{v}} + F'$, $G = G_{\mathbf{v}} + G'$
  and let $\varrho \assign \gcd \left( F_{\mathbf{v}}, G_{\mathbf{v}} \right)$
  be the common monomial divisor. Substituting  $\varrho^{v_i} X_i$ for $X_i$ ($i =
  1, \ldots, n$), where $\mathbf{v} = (v_1, \ldots, v_n)$, from {\eqref{wz3}}
  we get $\tilde{A}  \left( F_{\mathbf{v}} + \rho \tilde{F}' \right) +
  \tilde{B}  \left( G_{\mathbf{v}} + \rho \tilde{G}' \right) = \varrho^t 
  \left( \omega + \text{h.o.t.} \right)$, where $t = \deg_{\mathbf{v}} \omega$
  and $\tilde{A}, \tilde{B}, \tilde{F}', \tilde{G}' \in \mathfrak{S}$. By
  assumption, there are no monomials in the ideal $\left( F_{\mathbf{v}},
  G_{\mathbf{v}} \right)$; thus, $t > 0$. Hence, $\tilde{A}  \left(
  \frac{F_{\mathbf{v}}}{\varrho} + \tilde{F}' \right) + \tilde{B}  \left(
  \frac{G_{\mathbf{v}}}{\varrho} + \tilde{G}' \right) = \varrho^{t - 1} \omega
  + \text{h.o.t.}$. Replacing $F$ by $\frac{F_{\mathbf{v}}}{\varrho} +
  \tilde{F}'$, $G$ by $\frac{G_{\mathbf{v}}}{\varrho} + \tilde{G}'$, and
  $\omega$ by $\varrho^{t - 1} \omega$ we easily see that, still, all the
  assumptions of the lemma hold for the changed $F$, $G$ but now $\gcd \left(
  F_{\mathbf{v}}, G_{\mathbf{v}} \right) = 1$; moreover, we still have
  $\tmop{ord}_{\mathbf{v}} F = \tmop{ord}_{\mathbf{v}} G$ and relation of type
  {\eqref{wz3}}.
  
  Let $\xi \assign \tmop{ord}_{\mathbf{v}} F = \tmop{ord}_{\mathbf{v}} G$, $t
  \assign \tmop{ord}_{\mathbf{v}} \omega$, and let $F = \sum_{i \geqslant \xi}
  F^{(i)}$, $G = \sum_{i \geqslant \xi} G^{(i)}$, $A = \sum_{i \geqslant \delta}
  A^{(i)}$, $B = \sum_{i \geqslant \eta} B^{(i)}$ be the decompositions of the
  involved series into their quasihomogeneous components for the gradation
  defined by the weight vector $\mathbf{v}$. Thus, $F^{(\xi)} =
  F_{\mathbf{v}}$ and $G^{(\xi)} = G_{\mathbf{v}}$. Notice that
  $\tmop{ord}_{\mathbf{v}} A \geqslant \xi$ and $\tmop{ord}_{\mathbf{v}} B \geqslant
  \xi$, too. Indeed, the (non-zero) components of lowest degree of $AF$ and
  $BG$ are, respectively, equal to $A^{(\delta)} F^{(\xi)}$ and $B^{(\eta)}
  G^{(\xi)}$; clearly, $\min (\delta + \xi, \eta + \xi) \leqslant t$. This
  inequality must be strict because of our second assumption. But then, as
  $A^{(\delta)} \neq 0 \neq B^{(\eta)}$, we get $\delta = \eta$ and
  $A^{(\delta)} F^{(\xi)} + B^{(\eta)} G^{(\xi)} = 0$. Since $R [X_1, \ldots,
  X_n]$ is a {\tmname{UFD}} and $\gcd (F^{(\xi)}, G^{(\xi)}) = 1$, we infer
  that $F^{(\xi)} \divides B^{(\eta)}$ and $G^{(\xi)} \divides A^{(\delta)}$;
  hence, $\xi \leqslant \eta$, $\xi \leqslant \delta$, and $2 \xi < t$.
  
  Set $C^{(2 \xi + n)} \assign \sum_{0 \leqslant i \leqslant n} (A^{(\xi + i)} F^{(\xi +
  n - i)} + B^{(\xi + i)} G^{(\xi + n - i)})$, for $n \geqslant 0$. By the above,
  $C^{(2 \xi + n)}$ is the component of degree $(2 \xi + n)$ of the left-hand
  side of {\eqref{wz3}}. We shall construct a sequence $(P^{(j)})_{0 \leqslant j
  \leqslant t - 2 \xi - 1}$ of quasihomogeneous polynomials $P^{(j)} \in R [X_1,
  \ldots, X_n]$ of $\mathbf{v}$-degree $j$ such that
  
  \begin{equation}
    \left\{\begin{array}{lll}
      A^{(\xi + n)} & = & \sum_{0 \leqslant j \leqslant n} P^{(j)} G^{(\xi + n - j)}\\
      B^{(\xi + n)} & = & - \sum_{0 \leqslant j \leqslant n} P^{(j)} F^{(\xi + n - j)}
    \end{array}\right., \text{ for } 0 \leqslant n \leqslant t - 2 \xi - 1.
    \label{rekurencja}
  \end{equation}
  
  We have $2 \xi < t$ so, as above, we get $0 = C^{(2 \xi)} = A^{(\xi)}
  F^{(\xi)} + B^{(\xi)} G^{(\xi)}$. Thus, we may set $P^{(0)} \assign
  A^{(\xi)} / G^{(\xi)} = - B^{(\xi)} / F^{(\xi)} \in R [X_1, \ldots, X_n]$.
  Then {\eqref{rekurencja}} holds with $n = 0$.
  
  Say, we have already defined $(P^{(j)})_{0 \leqslant j \leqslant N - 1}$ for some $1
  \leqslant N \leqslant t - 2 \xi$. We have
  
  \begin{align*}
    C^{(2 \xi + N)} &= \sum_{0 \leqslant i \leqslant N} (A^{(\xi + i)} F^{(\xi + N - i)} + B^{(\xi +
    i)} G^{(\xi + N - i)}) = A^{(\xi + N)} F^{(\xi)} + B^{(\xi + N)} G^{(\xi)}+\\
    &+ \sum_{0 \leqslant i \leqslant N - 1}\,\, \sum_{0 \leqslant j \leqslant i} (P^{(j)} G^{(\xi + i
    - j)} F^{(\xi + N - i)} - P^{(j)} F^{(\xi + i - j)} G^{(\xi + N - i)}) =\\
    &= A^{(\xi + N)} F^{(\xi)} + B^{(\xi + N)} G^{(\xi)}+\\
	&+ \sum_{0 \leqslant j
    \leqslant N - 1} P^{(j)}  \sum_{j \leqslant i \leqslant N - 1} (G^{(\xi + i - j)} F^{(\xi
    + N - i)} - F^{(\xi + i - j)} G^{(\xi + N - i)}) .
  \end{align*}
  After interchanging the order of the summands, the inner sum becomes
  \[ \sum_{j \leqslant i \leqslant N - 1} G^{(\xi + i - j)} F^{(\xi + N - i)} - \sum_{j
     + 1 \leqslant i \leqslant N} F^{(\xi + N - i)} G^{(\xi + i - j)} = G^{(\xi)}
     F^{(\xi + N - j)} - F^{(\xi)} G^{(\xi + N - j)} . \]
  Hence,
  \begin{align*}
		C^{(2 \xi + N)} & = A^{(\xi + N)} F^{(\xi)} + B^{(\xi + N)} G^{(\xi)} +
    \sum_{0 \leqslant j \leqslant N - 1} P^{(j)}  (G^{(\xi)} F^{(\xi + N - j)} -
    F^{(\xi)} G^{(\xi + N - j)})=\\
    & = F^{(\xi)}  \left( A^{(\xi + N)} - \sum_{0 \leqslant j \leqslant N - 1}
    P^{(j)} G^{(\xi + N - j)} \right) + G^{(\xi)}  \left( B^{(\xi + N)} +
    \sum_{0 \leqslant j \leqslant N - 1} P^{(j)} F^{(\xi + N - j)} \right) .
  \end{align*}

  If $N \leqslant t - 2 \xi - 1$, then $C^{(2 \xi + N)} = 0$ and, as above, using
  the fact that $R [X_1, \ldots, X_n]$ is a {\tmname{UFD}} and $\gcd
  (F^{(\xi)}, G^{(\xi)}) = 1$, we define $P^{(N)}$ by $A^{(\xi + N)} - \sum_{0
  \leqslant j \leqslant N - 1} P^{(j)} G^{(\xi + N - j)} = P^{(N)} G^{(\xi)}$. But then,
  from the above equality we get $0 = P^{(N)} F^{(\xi)} + B^{(\xi + N)} +
  \sum_{0 \leqslant j \leqslant N - 1} P^{(j)} F^{(\xi + N - j)}$. Hence,
  {\eqref{rekurencja}} holds with $n = N$. Consequently, the sequence
  $(P^{(j)})_{0 \leqslant j \leqslant t - 2 \xi - 1}$ is defined by induction.
  
  If $N = t - 2 \xi$, then $C^{(2 \xi + N)} = \omega$. But then, we get
  $\omega \in (F^{(\xi)}, G^{(\xi)}) R [X_1, \ldots, X_n]$; contradiction.
						  
  \hspace*{\fill}\proofbox
\end{proof*}

Now we can give{\nopagebreak}

\begin{proof*}[of Theorem~{\rm\ref{tw_par}}]
  Clearly, we may assume that $\mathbf{v} = (v_x, v_y, v_z) \in
  \mathbb{N}_+^3$. We have $f_S = f_{\mathbf{v}}$. Since $S$ is a
  2-dimensional face and $\mathbf{v}$ has positive coordinates, $f_S$ depends
  in an essential way on $y$ and $z$, i.e.\ $\frac{\partial f_S}{\partial y}
  \neq 0$ and $\frac{\partial f_S}{\partial z} \neq 0$. Hence, $\frac{\partial
  f_S}{\partial y} = \frac{\partial f_{\mathbf{v}}}{\partial y} =
  (\frac{\partial f}{\partial y})_{\mathbf{v}}$ and $\frac{\partial
  f_S}{\partial z} = \frac{\partial f_{\mathbf{v}}}{\partial z} =
  (\frac{\partial f}{\partial z})_{\mathbf{v}}$. In view of the assumptions
  and the Bernstein theorem (Theorem \ref{ber}), the pair $\{ \frac{\partial
  \overline{f_S}}{\partial y} = 0, \frac{\partial \overline{f_S}}{\partial z}
  = 0\}$ possesses $\tmop{MV} (\frac{\partial \overline{f_S}}{\partial y},
  \frac{\partial \overline{f_S}}{\partial z}) > 0$ solutions in
  $(\mathbb{C}^{\ast})^2$. Hence, $\{ \frac{\partial f_S}{\partial y} = 0,
  \frac{\partial f_S}{\partial z} = 0\}$ possesses solutions in
  $(\mathbb{C}^{\ast})^3$. This means that in the ideal $((\frac{\partial
  f}{\partial y})_{\mathbf{v}}, (\frac{\partial f}{\partial z})_{\mathbf{v}})
  \mathcal{O}_3 = (\frac{\partial f_S}{\partial y}, \frac{\partial
  f_S}{\partial z}) \mathcal{O}_3$ there are no monomials.
  
  Moreover, applying Lemma \ref{le_degeneracy} to $F = f_S$, we infer that
  $\frac{\partial f_S}{\partial y}, \frac{\partial f_S}{\partial z}$ have no
  common factor in $\mathbb{C} [x, y, z]$ except a monomial.
  
  Summing up, we have checked that the pair $\frac{\partial f}{\partial y},
  \frac{\partial f}{\partial z}$ in $\mathcal{O}_3 =\mathbb{C} \{x, y, z\}$,
  and the weight vector $\mathbf{v}$ fulfil the assumptions of Lemma
  \ref{ret}. Therefore, $\frac{\partial f}{\partial y}$ and $\frac{\partial
  f}{\partial z}$ do not generate any element in $\mathcal{O}_3$ with initial
  $\mathbf{v}$-part being a monomial. This, in turn, means we are in the
  position to apply the Maurer theorem (Theorem \ref{mau}) to the curve $\{
  \frac{\partial f}{\partial y} = 0, \frac{\partial f}{\partial z} = 0\}$ and
  the vector $\mathbf{v}$. Thus, we are delivered the required parametrization
  $\Phi$.\hspace*{\fill}\proofbox
\end{proof*}

Although generic, the Bernstein non-degeneracy is a necessary assumption in
Theorem \ref{tw_par} (it does not follow from Kushnirenko non-degeneracy).
This is illustrated by the following{\nopagebreak}

\begin{example*}
  Let $f \assign (x^4 + y^3) z + x^4 y + \frac{1}{4} y^4 + x^6 + z^5$. Then
  $\Gamma^2 (f)$ is built of three faces, two of which are
  non-exceptional. Consider the face $S \assign \mathcal{N} \left( (x^4
  + y^3) z + x^4 y + \frac{1}{4} y^4 \right)$, a parallelogram in space.
  As its normal vector, we may take $\mathbf{v} \assign
  (3, 4, 4)$. It is easy to check that $f$ is $\mathcal{K}\!\!$-non-degenerate. We
  have $\left( \tfrac{\partial \overline{f_S}}{\partial y}, \frac{\partial
  \overline{f_S}}{\partial z} \right) = (1 + y^3 + 3 y^2 z, 1 + y^3)$. Hence,
  $\tmop{MV} \left( \tfrac{\partial \overline{f_S}}{\partial y},
  \frac{\partial \overline{f_S}}{\partial z} \right) = 3 > 0$. However,
  $\left\{ \tfrac{\partial \overline{f_S}}{\partial y} = \frac{\partial
  \overline{f_S}}{\partial z} = 0 \right\} \subset \{ z = 0 \}$ so the system
  has no solutions in $(\mathbb{C}^{\ast})^2$. A fortiori, there are no
  parametrizations of $\left\{ \tfrac{\partial f}{\partial y} = \frac{\partial
  f}{\partial z} = 0 \right\}$ with initial exponent vector proportional to
  $\mathbf{v}$, contrary to the assertion of Theorem \ref{tw_par}. The reason
  behind this is that the system $\left\{ \tfrac{\partial
  \overline{f_S}}{\partial y} = \frac{\partial \overline{f_S}}{\partial z} = 0
  \right\}$ is $\mathcal{B}$-degenerate: for the vector $\mathbf{w} \assign
  (3, 4) \in \mathbb{N}^2$ we have $\left( \tfrac{\partial
  \overline{f_S}}{\partial y} \right)_{\mathbf{w}} = \left( \frac{\partial
  \overline{f_S}}{\partial z} \right)_{\mathbf{w}} = x^4 + y^3$, proving the
  degeneracy. Note also that $S$ is a proximate face for the
  axis $0 \nospace x$ (see Definition \ref{de_prox} below). 
\end{example*}

It turns out that under some further restrictions, including ones on the geometry of the face, 
we may avoid the misbehaviour from the above example. Namely, we have.

\begin{theorem}\label{prop}Let $f : (\mathbb{C}^3, 0) \rightarrow (\mathbb{C}, 0)$ be an
  isolated singularity, and let $S \in \Gamma^2 (f)$ satisfy $\tmop{MV}
  (\frac{\partial \overline{f_S}}{\partial y}, \frac{\partial
  \overline{f_S}}{\partial z}) > 0$. Assume that no edges of $S$ have
  prolongations intersecting the axis $0 \nospace x$,
  {except, perhaps, those having a vertex in $0 \nospace x \nospace y \cup 0 \nospace x \nospace z$.}
  Assume, moreover, that $f_S$ has generic coefficients and $\tmop{supp} f_S =
  \Gamma^0 (f_S)$. Then there exists a parametrization $0 \neq \Phi =
  (\varphi_1, \varphi_2, \varphi_3) \in \mathbb{C} \{t\}^3$ of a branch of the
  curve $\{ \frac{\partial f}{\partial y} = 0, \frac{\partial f}{\partial z} =
  0\}$ at $0$, whose initial exponent vector is perpendicular to $S$.
\end{theorem}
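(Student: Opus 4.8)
The plan is to derive Theorem~\ref{prop} from Theorem~\ref{tw_par}. Write $\mathbf{v} \in \mathbb{Q}^3_{> 0}$ for a normal vector of $S$ and set $g_1 \assign \frac{\partial \overline{f_S}}{\partial y}$, $g_2 \assign \frac{\partial \overline{f_S}}{\partial z}$. Since $f$ is assumed isolated, $S \in \Gamma^2 (f)$, and $\tmop{MV} (g_1, g_2) > 0$ is assumed, the hypotheses of Theorem~\ref{tw_par} come down to two assertions: (i) $f$ is $\mathcal{K}\!\!$-non-degenerate on every edge of $S$; (ii) the pair $(g_1, g_2)$ is $\mathcal{B}$-non-degenerate. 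Once (i) and (ii) are in hand one simply quotes Theorem~\ref{tw_par}, whose conclusion --- an initial exponent vector proportional to $\mathbf{v}$ --- is exactly ours, since $S$ is $2$-dimensional.

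Assertion (i) should be short. Because $\tmop{supp} f_S = \Gamma^0 (f_S)$, the only monomials of $f_S$ lying on an edge $T$ of $S$ are its two endpoints, so $f_T$ is a binomial $a z^{\mathbf{a}} + b z^{\mathbf{b}}$ with $\mathbf{a}, \mathbf{b} \in \mathbb{N}^3$. As $S$ is a \emph{compact} face of $\Gamma_+ (f)$ with positive normal, no two distinct points of $S$ are proportional (otherwise $l_S = \langle \mathbf{v}, \mathbf{a} \rangle = \langle \mathbf{v}, \mathbf{b} \rangle = \lambda l_S$ with $\lambda \neq 1$ would force $l_S = 0$), so $\mathbf{a}$ and $\mathbf{b}$ are not proportional; a one-line computation with $z_i \frac{\partial f_T}{\partial z_i}$ --- at a point of $(\mathbb{C}^{\ast})^3$ both monomials are non-zero --- then shows that a binomial with non-proportional exponent vectors is $\mathcal{K}\!\!$-non-degenerate. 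No genericity is used here.

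Assertion (ii) is the crux. Since $S$ has positive normal, $\tmop{pr}_{y, z}$ restricts to a bijection of $S$ onto a $2$-dimensional polygon $\overline{S}$ that respects faces --- in particular an edge of $S$ prolongs to the axis $0x$ exactly when the corresponding edge of $\overline{S}$ prolongs to the origin of $\mathbb{R}^2_{y, z}$ --- and $\tmop{supp} \overline{f_S}$ equals the vertex set of $\overline{S}$. Fix $0 \neq \mathbf{w} \in \mathbb{Z}^2$. The support of $g_1$ (resp.\ $g_2$) is the vertex set of $\overline{S}$ with the vertices on $\{ y = 0 \}$ (resp.\ $\{ z = 0 \}$) deleted, translated by $(- 1, 0)$ (resp.\ $(0, - 1)$); since no three vertices of a convex polygon are collinear, each of $(g_1)_{\mathbf{w}}$ and $(g_2)_{\mathbf{w}}$ is a monomial or a binomial, so it is enough to treat the case where both are binomials, built respectively from pairs of vertices $\{ \mathbf{p}, \mathbf{q} \}$ and $\{ \mathbf{p}', \mathbf{q}' \}$ of $\overline{S}$, their coefficients being non-zero multiples of the corresponding coefficients $a_\nu$ of $f_S$. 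If $\{ \mathbf{p}, \mathbf{q} \} \neq \{ \mathbf{p}', \mathbf{q}' \}$, then the two torus equations $(g_1)_{\mathbf{w}} = 0$ and $(g_2)_{\mathbf{w}} = 0$ --- each cut out on $(\mathbb{C}^{\ast})^2$ by a single monomial, the two binomials being parallel --- have a common solution only if two \emph{distinct} Laurent monomials in the $a_\nu$ coincide, which fails for generic coefficients. If $\{ \mathbf{p}, \mathbf{q} \} = \{ \mathbf{p}', \mathbf{q}' \}$, the same reduction shows that the coefficients cancel, so a common zero exists \emph{iff} $\mathbf{p} \parallel \mathbf{q}$, i.e.\ iff the prolongation of $[\mathbf{p}, \mathbf{q}]$ meets the origin --- a condition no genericity can destroy, so it must be ruled out geometrically. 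Here $\mathbf{p}$ and $\mathbf{q}$ have strictly positive coordinates (each survives both $\partial / \partial y$ and $\partial / \partial z$): if $[\mathbf{p}, \mathbf{q}]$ is an edge of $\overline{S}$, its prolongation through the origin contradicts the hypothesis that no edge of $S$ prolongs to $0x$ save those with a vertex on $0xy \cup 0xz$; if $[\mathbf{p}, \mathbf{q}]$ is not an edge of $\overline{S}$, then $\mathbf{p}, \mathbf{q}$ can become adjacent in $\mathcal{N} (g_1)$ only because an entire arc of $\partial \overline{S}$ joining them consists of deleted $\{ y = 0 \}$-vertices, and likewise for $\mathcal{N} (g_2)$ and $\{ z = 0 \}$-vertices, and since the positive $y$- and $z$-axes lie on opposite sides of any line through the origin of positive slope these two arcs force $\mathcal{N} (g_1)$ and $\mathcal{N} (g_2)$ to present this edge with \emph{opposite} normals, so no single $\mathbf{w}$ can realise both $(g_1)_{\mathbf{w}}$ and $(g_2)_{\mathbf{w}}$ as the $\{ \mathbf{p}, \mathbf{q} \}$-binomial --- contradiction. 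Thus, for generic $f_S$, no $\mathbf{w}$ witnesses $\mathcal{B}$-degeneracy, and Theorem~\ref{tw_par} applies.

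I expect the real work to be the combinatorics inside (ii): tracking how the edges of $\mathcal{N} (g_1)$ and $\mathcal{N} (g_2)$ relate to those of $\overline{S}$ near the axes $\{ y = 0 \}$ and $\{ z = 0 \}$, and making the generic step effective by exhibiting the explicit polynomial in the $a_\nu$ that must not vanish. The reduction, assertion (i), and the generic subcase of (ii) are routine; the example preceding the theorem --- $\mathcal{B}$-degenerate only through a non-generic coefficient relation, not through its geometry --- shows precisely why genericity has to enter.
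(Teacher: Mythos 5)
Your overall route is the paper's: reduce Theorem~\ref{prop} to Theorem~\ref{tw_par} by verifying (i) $\mathcal{K}\!\!$-non-degeneracy on the edges of $S$ and (ii) $\mathcal{B}$-non-degeneracy of $(g_1,g_2)$, with (ii) being the real content --- this is precisely the paper's Lemma~\ref{lem_pom}. Your treatment of (i) is a nice simplification: the paper invokes genericity of $\mathcal{K}\!\!$-non-degeneracy for this step, whereas you argue directly that a binomial with non-proportional exponent vectors is automatically non-degenerate; this works and needs no genericity. In (ii) your case split (monomials vs.\ binomials, same segment vs.\ distinct segments, edge vs.\ diagonal) matches the paper's, and the handling of the generic sub-case and of the edge sub-case is sound.

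There is, however, a gap in the ``not an edge'' sub-case, and it sits at exactly the configuration the paper singles out. If $[\mathbf{p},\mathbf{q}]$ becomes an edge of both $\mathcal{N}(g_1)$ and $\mathcal{N}(g_2)$ with a \emph{common} inward normal $\mathbf{w}$, then the arc of $\partial\overline{S}$ on the $\mathbf{w}$-negative side of $[\mathbf{p},\mathbf{q}]$ must consist of vertices that are deleted for \emph{both} $g_1$ and $g_2$, i.e.\ vertices lying on $\{y=0\}\cap\{z=0\}=\{(0,0)\}$. So the deleted arc is the single vertex $(0,0)$ (equivalently: some $x^i\in\tmop{supp} f_S$, and $\mathbf{p},\mathbf{q}$ are its two neighbours --- this is exactly what the paper calls the ``diagonal of $S$'' case). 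Your ``opposite normals'' argument, which places the two arcs strictly on the positive $z$-axis and positive $y$-axis respectively, does not apply here: $(0,0)$ is on neither positive axis, and indeed it lies \emph{on} the line $\ell=\overline{\mathbf{p}\mathbf{q}}$ under the hypothesis being refuted, so it is not strictly on either side. The correct finish is a different (and shorter) observation: if $\ell$ passed through the origin, then $(0,0)$, $\mathbf{p}$, $\mathbf{q}$ would be three collinear vertices of the convex polygon $\overline{S}$ --- impossible (this is where $\tmop{supp} f_S=\Gamma^0(f_S)$ is used). Adding this sentence closes the gap; with it, your proof is correct and coincides in substance with the paper's Lemma~\ref{lem_pom} plus Theorem~\ref{tw_par}.
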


The above corollary follows directly from Theorem \ref{tw_par}, genericness of
$\mathcal{K}\!\!$-non-degeneracy (see
{\cite{Kou76}}, {\cite{Oka79}}), and the following observation.

\begin{lemma}
  \label{lem_pom}Under the above assumptions, the pair $(\frac{\partial
  \overline{f_S}}{\partial y}, \frac{\partial \overline{f_S}}{\partial z})$ is
  $\mathcal{B}$-non-degenerate.
\end{lemma}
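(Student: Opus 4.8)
The plan is to show that for every nonzero $\mathbf{w} = (w_y, w_z) \in \mathbb{Z}^2$ the system $\{(\tfrac{\partial \overline{f_S}}{\partial y})_{\mathbf{w}} = 0,\ (\tfrac{\partial \overline{f_S}}{\partial z})_{\mathbf{w}} = 0\}$ has no solution in $(\mathbb{C}^{\ast})^2$. First I would record the one genuinely geometric input: writing $\overline{S} = \tmop{pr}_{y,z}(S)$, the hypotheses say that $\overline{f_S}$ has generic coefficients supported exactly on the vertices $\Gamma^0(\overline{f_S})$, and that no edge of $\overline{S}$ has a prolongation meeting the origin $(0,0)$ except edges already incident to one of the coordinate axes $0y$ or $0z$ (these come from the edges of $S$ incident to $0xy$ or $0xz$). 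I would then separate the analysis according to whether the ray defined by $-\mathbf{w}$ ``points toward'' the origin, i.e.\ whether $S_{-\mathbf{w}}$ (equivalently the $\mathbf{w}$-initial face of $\overline{f_S}$) is a vertex, an edge, or all of $\overline{S}$.

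The key case is when $\mathbf{w}$ is an inner normal to a bounded edge $\tilde T$ of $\mathcal{N}(\overline{f_S}) = \overline{S}$, so that $(\overline{f_S})_{\mathbf{w}} = (\overline{f_S})_{\tilde T}$ is a genuine binomial (or longer polynomial) supported on a segment. Here I would argue: if $\tilde T$ does \emph{not} contain a vertex on either coordinate axis, then the supporting line of $\tilde T$ misses $(0,0)$, hence $(\overline{f_S})_{\tilde T}$, after factoring out its monomial content, is a one-variable polynomial in a suitable monomial $y^a z^b$ of degree $\geq 1$ with \emph{nonzero constant term and nonzero leading term}, whose partial derivatives $(\tfrac{\partial \overline{f_S}}{\partial y})_{\mathbf{w}}, (\tfrac{\partial \overline{f_S}}{\partial z})_{\mathbf{w}}$ are again (up to monomials) polynomials in $y^a z^b$; for generic coefficients this one-variable polynomial has no multiple root in $\mathbb{C}^{\ast}$, so the two initial forms cannot vanish simultaneously on the torus. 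If $\tilde T$ \emph{does} have a vertex on, say, $0y$, then its supporting line passes through a point $(c,0)$ with $c\geq 1$; the possibility $c=1$ is exactly what the clause ``\emph{except those having a vertex in $0xy$}'' is designed to forbid at the level of the original $3$-dimensional picture — I would translate the exceptional-face analysis to see that such an edge of $S$ would force $f_S$ to be degenerate on that edge or would make $m(S)$ irrelevant, and in any case the genericity of the coefficients kills simultaneous vanishing on the torus. The remaining cases are easy: if $(\overline{f_S})_{\mathbf{w}}$ is a single monomial (vertex case), its $y$- and $z$-derivatives are monomials and visibly do not vanish on $(\mathbb{C}^{\ast})^2$; and if $(\overline{f_S})_{\mathbf{w}} = \overline{f_S}$ (the whole polygon, which happens only for $\mathbf{w}$ in the interior of the normal fan cone over $\overline{S}$, i.e.\ essentially $\mathbf{w} = 0$ up to the already-excluded case) this is the $\mathbf{w}=0$ instance, handled by $\mathcal{K}$-non-degeneracy of the generic $f_S$ together with $\tmop{MV} > 0$.

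The step I expect to be the main obstacle is making the ``generic coefficients'' argument uniform over the infinitely many weight vectors $\mathbf{w}$: a priori one needs a single choice of coefficients that works for every edge direction simultaneously. I would handle this by noting that there are only finitely many edges of the fixed polygon $\overline{S}$ (hence finitely many relevant initial forms up to scaling), so ``generic'' can be taken to mean: outside a finite union of proper Zariski-closed subsets of the coefficient space, one for each edge, each expressing the existence of a common torus zero of the corresponding pair of initial derivatives (a resultant-type condition). The only subtlety is checking each such condition is \emph{proper}, i.e.\ that for at least one choice of coefficients the pair of edge-initial-derivatives has no common zero on $(\mathbb{C}^{\ast})^2$; this is precisely the one-variable-no-multiple-root computation sketched above, using that the supporting line of the edge avoids $(0,0)$ — which is exactly where the hypothesis on prolongations of edges of $S$ (with the stated exception) enters. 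Finally, the $\mathbf{w}=0$ condition and $\tmop{MV}>0$ survive genericity by Kouchnirenko's and Oka's theorems cited in the text, so intersecting with one more proper closed set finishes the argument.
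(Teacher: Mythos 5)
Your proposal follows the same broad strategy as the paper (case analysis on $\mathbf{w}$-initial faces combined with genericity of coefficients), but it misses the central structural observation that makes the paper's argument go through cleanly, and as written it has a gap.

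The key fact the paper exploits is this: since $\tmop{supp} f_S = \Gamma^0(f_S)$, the initial forms $(\tfrac{\partial \overline{f_S}}{\partial y})_{\mathbf{w}}$ and $(\tfrac{\partial \overline{f_S}}{\partial z})_{\mathbf{w}}$ are \emph{always} monomials or binomials. The paper then splits on whether they are both binomials and, if so, whether the two binomials are derivatives of the same edge or diagonal $T$ of $S$ or of two different segments. Your proposal instead analyzes initial faces of $\overline{S}$ itself, which is not the right object — the Newton polytopes of $\tfrac{\partial \overline{f_S}}{\partial y}$ and $\tfrac{\partial \overline{f_S}}{\partial z}$ differ from $\overline{S}$ (and from each other), and a single $\mathbf{w}$ can pick out initial binomials coming from a \emph{diagonal} of $S$ (this happens precisely when some $x^i\in\tmop{supp} f_S$, so that the vertex $(0,0)\in\overline{S}$ disappears from both derivative polytopes). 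Your casework on edges of $\overline{S}$ does not cover this diagonal situation, which the paper handles explicitly.

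The more serious confusion is in your treatment of the ``exception'' clause. You worry about edges of $\overline{S}$ with a vertex on a coordinate axis and try to ``translate the exceptional-face analysis to see that such an edge of $S$ would force $f_S$ to be degenerate,'' which is not what happens. The correct observation is much simpler: if the segment $T$ has a vertex in the plane $0xy$ (say), then $\tfrac{\partial f_T}{\partial z}$ vanishes at that vertex, so $\tfrac{\partial f_T}{\partial z}$ is a \emph{monomial}, not a binomial, and the torus-nonvanishing is immediate. Therefore, in the only hard case (both initial derivatives binomials from the same $T$), $T$ is automatically disjoint from $0xy$ and $0xz$, the exception clause never applies to $T$, and the hypothesis on prolongations gives directly that the affine hull of $T$ misses $0x$. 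One then writes $\overline{f_T}$ in one of the two normal forms and checks by hand that the system has no torus zero unless the prolongation meets $0x$. Your version of this final computation is essentially correct, but the route to the normal form (and the elimination of the ``touches-an-axis'' case) is where your argument is incomplete. Your remark on making ``generic'' uniform over finitely many directions is fine and matches what is implicit in the paper.
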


\begin{proof*} Take any $\mathbf{w} = (w_1, w_2) \in \mathbb{Z}^2
\setminus \{0\}$ and consider the system of equations
\begin{equation}
  \left( \frac{\partial \overline{f_S}}{\partial \nospace y}
  \right)_{\mathbf{w}} = \left( \frac{\partial \overline{f_S}}{\partial
  \nospace z} \right)_{\mathbf{w}} = 0. \label{13}
\end{equation}
We must show that this system has got no solutions in $(\mathbb{C}^{\ast})^2$.
Since $\tmop{supp}f_S$ is equal to the set of vertices of $S$, the polynomials
$(\frac{\partial \nospace \overline{f_S}}{\partial y})_{\mathbf{w}}$ and
$(\frac{\partial \nospace \overline{f_S}}{\partial z})_{\mathbf{w}}$ are
monomials or binomials only. Clearly, it suffices to consider the case both of
them are binomials. Then their supports define parallel segments. If these
binomials are not partial derivatives of one and the same edge (or diagonal)
of $\overline{f_S}$, then, because of genericness of coefficients of $f_S$ and
the parallelity of the segments, system {\eqref{13}} has got no solutions in
$(\mathbb{C}^{\ast})^2$.\label{ewentualnie tutaj bernstein} In the opposite
case, we have $(\frac{\partial \overline{f_S}}{\partial y})_{\mathbf{w}} =
\frac{\partial \overline{f_T}}{\partial y}$, $(\frac{\partial
\overline{f_S}}{\partial z})_{\mathbf{w}} = \frac{\partial
\overline{f_T}}{\partial z}$, where $T$ is an edge or a diagonal of $S$ (the
latter may happen if some $x^i \in \tmop{supp} f_S$). The polynomial $f_T$ is
also a binomial.
{As $\frac{\partial f_T}{\partial y}$ and $\frac{\partial f_T}{\partial z}$
are binomials, the segment $T$ is disjoint from both coordinate planes 
$0 \nospace x \nospace y$, $0 \nospace x \nospace z$.}
If $T \in \Gamma^1 (f_S)$, then, directly by the assumption, we get that
the line containing $T$ does not intersect the axis $0 \nospace x$. If this is
not the case, viz., $T$ is a diagonal of $S$, the only possibility is that $T$
connects the two vertices of $\mathcal{N} (f_S)$ joined by edges with $x^i$
(as $\tmop{supp} f_S = \Gamma^0 (f_S)$). Thus, also in this situation, the
line containing $T$ does not intersect the axis $0 \nospace x$.

Clearly, $\overline{f_T}$ takes one of the following two forms: 1) $y^l z^m 
(\alpha y^a + \beta z^b)$, with $l, m, a, b > 0$, or 2) $y^l z^m  (\alpha +
\beta y^a z^b)$, with $l, m > 0, a + b > 0$, where the coefficients $\alpha,
\beta$ are generic. An easy analysis leads to the conclusion that the system
$\{ \frac{\partial \overline{f_T}}{\partial y} = 0, \frac{\partial
\overline{f_T}}{\partial z} = 0\}$ has got a solution in
$(\mathbb{C}^{\ast})^2$ if and only if $\overline{f_T}$ is of the second form,
where, moreover, $a, b > 0$ and $\frac{k}{l} = \frac{a}{b}$. This last
condition, however, means the prolongation of $T$ intersects the axis $0
\nospace x$; impossible.
\hspace*{\fill}\proofbox
\end{proof*}

\section{The geometry of proximity faces}\label{geometry}

Let $g:(\mathbb{C}^3, 0) \rightarrow (\mathbb{C},0)$ 
be an isolated singularity and $\omega \in\{x,y,z\}.$
If $\Gamma^2(g)\setminus E_{\omega}(g) \neq \varnothing$,
the maximal $m (S)_\omega$
among $S\in\Gamma^2 (g)\setminus E_\omega (g)$
is always achieved on an $0 \omega$-non-exceptional face
of a special type, a face -- in a sense -- ``closest'' to one of the
coordinate axes (see Lemma \ref{le_geo} below). Moreover, then, there are certain restrictions on the
geometry of such a face. This leads to the following definition.

\begin{definition}
  \label{de_prox} Let $\omega \in \{x, y, z\}$. We say that
$S \in \Gamma^2 (g)$ is {\tmem{proximate for the axis $\nospace 0 \nospace
\omega$}} if $S$ is non-exceptional with respect to the axis $0 \nospace
\omega$, has a vertex lying at distance $\leqslant 1$ from this axis, and
touches both coordinate planes containing this axis. A {\tmem{proximity
face}} is one which is proximate for one of the axes.
\end{definition}

In particular, according to the above definition, a face that is $0 \omega$-non-exceptional
and has a vertex on the axis $0 \omega$ is $0 \omega$-proximate.
The existence of a proximity face for some axis is characterized by the two following propositions.
{They are simple consequences of Lemma 3.1 and Theorem 3.8 in \cite{Ole13}.}

\begin{proposition}\label{exist_prox}
Let $\omega \in\{x,y,z\}.$ There exists a proximity face for\/ $0\omega$ if, and only if,\/ $\Gamma^2(g)\setminus E_{\omega}(g)\neq \varnothing.$ Moreover, if\/ $\Gamma^2(g)\setminus E(g)\neq \varnothing$, then all the proximity faces are
non-exceptional.
\end{proposition}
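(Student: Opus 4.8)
The plan is to derive both assertions from the results of \cite{Ole13}, with the geometry of the Newton polyhedron doing the real work. Fix $\omega \in \{x,y,z\}$ and, to fix ideas, take $\omega = x$; the other cases are symmetric. The key dichotomy is: a $2$-dimensional face $S$ either is exceptional with respect to the axis $0x$ — i.e.\ it is the pyramid described after Figure \ref{fig_excep}, with apex at distance $1$ from $0x$ in the direction of another axis and base in the opposite coordinate plane — or it is not, in which case the question becomes whether among the $0x$-non-exceptional faces there is one that actually reaches close to the axis $0x$ and touches both planes $0xy$ and $0xz$ (the two planes containing $0x$). First I would observe that, since $f$ is an isolated singularity, Proposition \ref{BrOl} applies, so $f$ is nearly convenient and $\tmop{supp} f$ meets each of the three coordinate planes; in particular $\Gamma_+(f)$ comes ``close'' to each axis. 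The near-convenience along $0x$ means there is a monomial $x^m$ or $x^m y$ or $x^m z$ in $\tmop{supp} f$, so the face structure of $\Gamma(f)$ near the axis $0x$ is constrained exactly as in \cite[Lemma 3.1]{Ole13}.

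For the forward implication, suppose $\Gamma^2(g) \setminus E_x(g) \neq \varnothing$, i.e.\ there is at least one $2$-face that is not $0x$-exceptional. I would invoke \cite[Theorem 3.8]{Ole13} (together with Lemma 3.1 there) to extract, among all such faces, one that is ``closest'' to the axis $0x$: concretely, following the vertices of $\Gamma(f)$ that are nearest to $0x$, one locates a $0x$-non-exceptional $2$-face having a vertex at distance $\leqslant 1$ from $0x$ and touching both $0xy$ and $0xz$. This is exactly the definition of a proximate face for $0x$. The reverse implication is immediate: if a proximate face for $0x$ exists, then by Definition \ref{de_prox} it is in particular $0x$-non-exceptional, so $\Gamma^2(g) \setminus E_x(g) \neq \varnothing$.

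For the ``moreover'' clause, assume $\Gamma^2(g)\setminus E(g) \neq \varnothing$, i.e.\ there is a $2$-face that is exceptional with respect to \emph{no} axis. I claim every proximity face is then non-exceptional. Suppose $S$ is proximate for $0\omega$, say $\omega = x$; then $S$ is $0x$-non-exceptional by definition, and I must rule out that $S$ is exceptional with respect to $0y$ or $0z$. Here I would use the strong geometric constraint: a face exceptional with respect to, say, $0z$ is a pyramid whose apex lies at distance $1$ from $0z$ in the direction of some other axis and whose base lies in a coordinate plane not containing $0z$ — but a $0x$-proximate face must touch both $0xy$ and $0xz$ and have a vertex within distance $1$ of $0x$, and one checks these two pyramidal shapes are incompatible unless the singularity has no non-exceptional $2$-face at all, contradicting $\Gamma^2(g)\setminus E(g)\neq\varnothing$. (This incompatibility is precisely the content of the classification of face-types in \cite{Ole13}, and is again a consequence of near-convenience together with $\tmop{supp} f$ meeting all three coordinate planes.)

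The main obstacle is the forward implication: producing the proximate face requires more than the mere nonemptiness of $\Gamma^2(g)\setminus E_x(g)$ — one must trace the boundary of $\Gamma_+(f)$ near $0x$ and argue that a non-exceptional $2$-face of the required special shape necessarily appears there. I expect this to reduce cleanly to \cite[Lemma 3.1 and Theorem 3.8]{Ole13}, which already organize the faces of a nearly convenient surface singularity near a coordinate axis into exactly the exceptional/proximate types; so the proof should amount to quoting those statements and unwinding the definitions, rather than any new combinatorial computation. The ``moreover'' clause is then a short geometric incompatibility check of the kind sketched above.
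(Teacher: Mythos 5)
Your approach matches the paper's, which gives no argument at all and simply asserts that Proposition~\ref{exist_prox} is a ``simple consequence of Lemma 3.1 and Theorem 3.8 in \cite{Ole13}''; you invoke precisely those sources for the main equivalence, and the reverse implication is indeed immediate from Definition~\ref{de_prox}. So far, so good.

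The ``moreover'' argument, however, has a genuine gap. You write that ``these two pyramidal shapes are incompatible unless the singularity has no non-exceptional $2$-face at all,'' framing the claim as a local geometric clash between a $0x$-proximate face and a $0z$-exceptional one. But the paper's own Example~\ref{rem_prox} exhibits $g = xz + yz + y^3$, whose single $2$-face $S = \mathcal{N}(g)$ is simultaneously $0x$-proximate \emph{and} $0z$-exceptional --- so there is no local incompatibility of shapes. The real content of the ``moreover'' clause is global: one must show that \emph{if} some proximity face $S$ is exceptional with respect to another axis, \emph{then} the rest of $\Gamma(g)$ is forced into the degenerate configuration of Proposition~\ref{characterization} (an edge of the form $\mathcal{N}(xy + z^\alpha)$ together with $E(g)=E_\omega(g)$), which is precisely $\Gamma^2(g)\setminus E(g)=\varnothing$. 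Your ``one checks\ldots'' hides this step entirely. A correct proof should analyze what the existence of such an $S$ entails for the neighbouring faces of $\Gamma_+(g)$ in the coordinate planes through $0x$ and reduce to Proposition~\ref{characterization} (or the corresponding statements in \cite{Ole13}), rather than appeal to a shape incompatibility that, by Example~\ref{rem_prox}, does not actually hold.
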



\begin{proposition}\label{characterization}
{Let $\omega \in \{ x, y, z \}$. The following conditions are equivalent:
\begin{enumerate}
  \item $\Gamma^2 (g) \setminus E_{\omega} (g) = \varnothing$.
  
  \item $E (g) = E_{\omega} (g)$ and, up to permutation of the variables $(x,
  y, z)$, there exists an edge $\mathcal{N} (x {} y + z^{\alpha}) \in
  \Gamma^1 (g)$, for some $\alpha \geqslant 2$.
\end{enumerate}
}
\end{proposition}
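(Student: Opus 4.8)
\emph{Plan, and the easy direction.} The plan is to establish the two implications separately; both reduce, after elementary set‑theoretic bookkeeping, to the structural description of the Newton boundary of an isolated surface singularity in \cite[Lemma~3.1 and Theorem~3.8]{Ole13}, supplemented by \cite[Theorem~1.8]{Ole13} (recalled as the first of the Comments after Theorem~\ref{tw_main}). For $(1)\Rightarrow(2)$: the inclusions $E_{\omega}(g)\subseteq E(g)\subseteq\Gamma^2(g)$ hold in general, so the hypothesis $\Gamma^2(g)\setminus E_{\omega}(g)=\varnothing$ forces equality throughout, $\Gamma^2(g)=E(g)=E_{\omega}(g)$. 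In particular $E(g)=E_{\omega}(g)$, and $\Gamma^2(g)\setminus E(g)=\varnothing$, so \cite[Theorem~1.8]{Ole13} yields, after a permutation of $(x,y,z)$, an edge $\mathcal{N}(xy+z^{\alpha})\in\Gamma^1(g)$ with $\alpha\geqslant 2$; this is precisely~(2).

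\emph{The direction $(2)\Rightarrow(1)$.} Relabel the variables so that the edge in~(2) is $\mathcal{N}(ab+c^{\alpha})$, $\{a,b,c\}=\{x,y,z\}$, $\alpha\geqslant 2$; in $(a,b,c)$‑coordinates $ab=(1,1,0)$ and $c^{\alpha}=(0,0,\alpha)$. The whole matter rests on the inclusion
\[ \Gamma^2(g)\ \subseteq\ E_{a}(g)\cup E_{b}(g). \]
Granting it, $E_{a}(g)\cup E_{b}(g)\subseteq E(g)=E_{\omega}(g)$ (the first hypothesis of~(2)) yields $\Gamma^2(g)\subseteq E_{\omega}(g)$, i.e.\ $\Gamma^2(g)\setminus E_{\omega}(g)=\varnothing$, which is~(1). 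The hypothesis $E(g)=E_{\omega}(g)$ does real work here and is not implied by the existence of the edge alone: for $g=xy+x^{3}+y^{3}+z^{\alpha}$ one has $\mathcal{N}(xy+z^{\alpha})\in\Gamma^1(g)$ and $\Gamma^2(g)=E(g)$, yet the two $2$‑faces of $\Gamma(g)$ are exceptional with respect to \emph{different} axes, so $E(g)\neq E_{\omega}(g)$ for every choice of $\omega$ and~(1) fails.

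\emph{Sketch of the inclusion.} Assume $\Gamma^2(g)\neq\varnothing$ (else there is nothing to prove). Let $S\in\Gamma^2(g)$ have $ab$ as a vertex, and let $\mathbf{v}_S=(p,q,r)\in\mathbb{Q}_{>0}^{3}$ be its normal; then $l_S=\langle\mathbf{v}_S,ab\rangle=p+q$, so every vertex $(i,j,k)\neq ab$ of $S$ obeys $p(i-1)+q(j-1)+rk=0$, and positivity of $p,q,r$ (with $k\geqslant 0$) forces $i=0$ or $j=0$: apart from $ab$, no vertex of $S$ has both its $a$‑ and its $b$‑coordinate positive. A short convexity argument — project $S$ by forgetting the $a$‑coordinate, using that $\mathcal{N}(ab+c^{\alpha})$ being a face of $\Gamma_{+}(g)$ forbids any power $c^{k}$ with $k<\alpha$ from $\tmop{supp}g$ — then excludes the ``mixed'' case in which some vertices of $S$ sit in $\{a=0\}$ and others in $\{b=0\}$. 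Hence all vertices of $S$ other than $ab$ lie in a single coordinate plane $\{a=0\}$ or $\{b=0\}$, whereupon $\partial f_S/\partial b$ (resp.\ $\partial f_S/\partial a$) is a pure power of $a$ (resp.\ of $b$), so $S\in E_{a}(g)$ (resp.\ $S\in E_{b}(g)$). The same projection argument transmits this ``one‑coordinate‑plane'' property along the shared edges of consecutive $2$‑faces in the fan incident to $ab$, starting from the (at most two) faces meeting along $\mathcal{N}(ab+c^{\alpha})$; and \cite[Lemma~3.1 and Theorem~3.8]{Ole13} furnish the structural fact that — $g$ being near‑convenient with $ab$ a vertex of $\Gamma_{+}(g)$ (Proposition~\ref{BrOl}) — every $2$‑face of $\Gamma(g)$ contains $ab$, so the analysis covers all of $\Gamma^2(g)$.

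\emph{Main obstacle.} The step I expect to be hardest is the last one — that the $2$‑faces through $ab$ exhaust $\Gamma^2(g)$. A $2$‑face disjoint from $ab$ is \emph{not} ruled out by the supporting‑hyperplane inequality (the vertex $ab$ is allowed to lie strictly above the plane of such a face); instead one must use that $\tmop{supp}g$ contains no degree‑$1$ monomial (since $\nabla g(0)=0$), together with near‑convenience, which jointly leave no room for such a face once the edge $\mathcal{N}(ab+c^{\alpha})$ is present — this being the content of \cite[Theorem~3.8]{Ole13} that I would invoke rather than reprove. Finally, the degenerate case $\Gamma^2(g)=\varnothing$ (e.g.\ $g=xy+z^{2}$) is routine: both~(1) and the equality $E(g)=E_{\omega}(g)$ are then trivially true, and~(2) reduces to the bare existence of the edge, which \cite[Theorem~1.8]{Ole13} again provides.
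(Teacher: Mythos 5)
Your route coincides with the paper's, which disposes of this proposition simply by citing Lemma~3.1 and Theorem~3.8 of Ole13; your unpacking of both implications into set‑theoretic bookkeeping ($E_\omega\subseteq E\subseteq\Gamma^2$, equality throughout under~(1)) plus the structural facts from Ole13 is sound, and the example $g=xy+x^3+y^3+z^\alpha$ showing that the hypothesis $E(g)=E_\omega(g)$ does real work in~(2) is a useful check. One small slip in the sketch of $(2)\Rightarrow(1)$: when all vertices of $S$ other than $ab$ lie in $\{a=0\}$, it is $\partial f_S/\partial a$ that collapses to a constant multiple of $b$ (placing $S\in E_b(g)$), not $\partial f_S/\partial b$ to a power of $a$ — those other vertices may well have positive $b$-coordinate — so your two cases are interchanged; since only the union $E_a(g)\cup E_b(g)$ enters the downstream argument, the conclusion is unaffected.
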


It is easy to observe that a face can be exceptional with respect to one of the axes only, i.e.
 $E_x(g)\cap E_y(g)=\varnothing$, $E_x(g)\cap E_z(g)=\varnothing$, $E_y(g)\cap E_z(g)=\varnothing$.
 Hence, assuming {$\Gamma^2 (g) \neq \varnothing$}, by Proposition \ref{exist_prox} there exist proximity faces for $g$ for at least two coordinate axes. But not necessarily for all three axes, as the following example shows.

{\begin{example}\label{rem_prox}
In a typical situation, a proximity face $S$ is
non-exceptional; in general, however, this is not the case, e.g.~$S \assign
\mathcal{N} (g)$, where $g \assign xz + yz + y^3$, is $0 \nospace x$-proximate
but $\frac{\partial g}{\partial x} = z$ so $S$ is exceptional with respect to
the axis $0 \nospace z$. This also means that the isolated singularity $g$ does not 
have any proximity face for the axis $0 \nospace z$. Observe that, by Proposition \ref{exist_prox}, this phenomenon can only happen
if $\Gamma^2 (g) \setminus E (g)=\varnothing$.
\end{example}}

\medskip

 Let 
$S\in\Gamma^2 (g)$ be proximate for, say, the axis $0 \nospace x$. One can
observe that two situations may occur (cf.\ \cite[Lemma 3.1 and Property 3.3]{Ole13}):
\begin{enumeratealpha}
  \item \label{it_dog}$S$ is
  {\tmem{convenient with respect to the axis $0 \nospace x$}} (i.e.\ $S$
  touches $0 \nospace x$) and then this proximity face is not necessarily
  unique for $0 \nospace x$ -- see Figure \ref{fig_proximity} (a). In this
  case, all the faces proximate for the axis $0 \nospace x$ share one common
  vertex, $x^{m (S)_x}$.\label{To wynika z braku wyj�tkowych}
  
  \item \label{it_niedog}$S$ is {\tmem{non-convenient with respect to the axis
  $0 \nospace x$}} (i.e.\ $S$ is disjoint from $0 \nospace x$); then it is
  unique and has an edge joining some monomials of the form $x^k z$, $k \geqslant
  1$, and $x^m y^n$, $m \geqslant 0, n \geqslant 1$ (or similar monomials with the
  variables $y$ and $z$ permuted). Moreover, in $\tmop{supp} g$ restricted to
  the plane $0 \nospace x \nospace y$, the point $x^m y^n$ has the smallest
  possible $y$-coordinate among all $0 \nospace x$-non-exceptional faces. See Figure \ref{fig_proximity} (b).
\end{enumeratealpha}
\begin{figure}[h!]
  \resizebox{343pt}{101pt}{\includegraphics{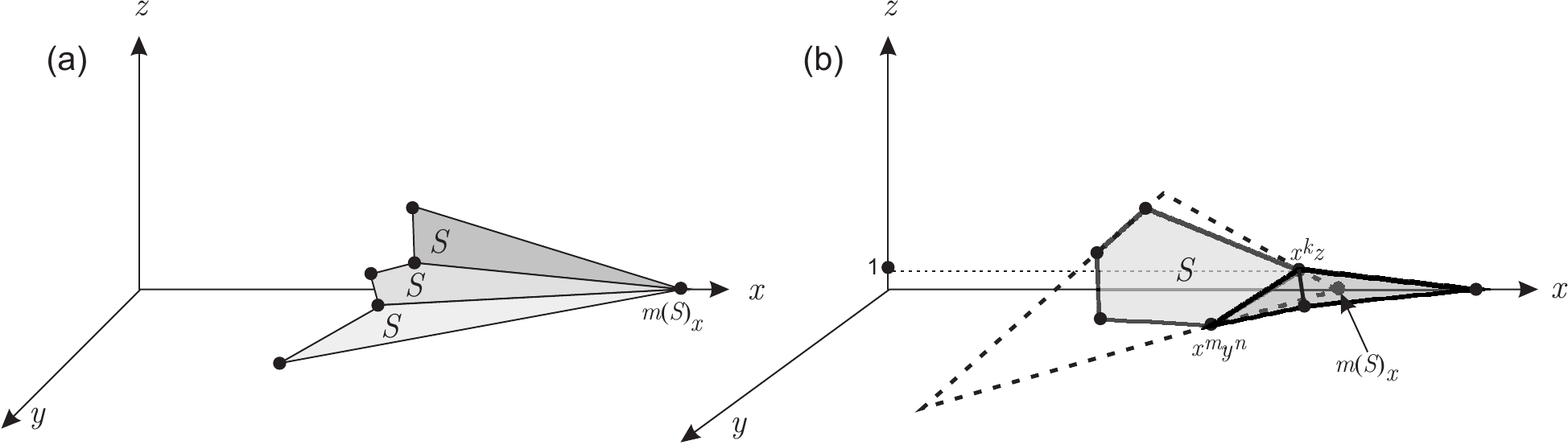}}
  \caption{\label{fig_proximity}Proximity faces $S$ for the axis $0 
\nospace x$.}
\end{figure}

\smallskip
Note that, in both above situations, for a vector $\mathbf{v}_S = (v_x, v_y,
v_z) \in \mathbb{N}_+^3$ normal to $S$, we have
$m (S) \geqslant m (S)_x = \frac{l_S}{v_x}$, where $l_S$ is the
$\mathbf{v}_S$-degree of $g_S$ (also $\mathbf{v}_S$-order of $g$).

\medskip
We omit elementary proofs of the following three facts on proximity faces (stated for the axis $0 x$ for simplicity;
analogous statements hold for the other axes):

\begin{lemma}\label{unique_convenient} Let $S$ be a proximate face for the axis $0x$, convenient with respect to this axis.  If $S$ has a diagonal joining some monomials of the form $x^k z$, $k \geqslant
  1$, and $x^m y^n$, $m \geqslant 0, n \geqslant 1$, then $S$ is the unique proximity face for the axis $0x$.
\end{lemma}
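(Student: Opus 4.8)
The plan is to project $\Gamma(g)$ onto the coordinate plane $0yz$ and to show that, on account of the assumed diagonal, the image $\overline{S}:=\tmop{pr}_{y,z}(S)$ completely fills the quadrant angle at the image of the axis vertex $x^{m(S)_x}$, leaving no room for a second proximity face. To set the stage: since $S$ is convenient with respect to $0x$, by situation~\ref{it_dog} above it contains the vertex $x^{N}$ on the axis $0x$, where $N:=m(S)_x$, and \emph{all} faces proximate for $0x$ share this vertex. Write $A:=x^{k}z=(k,0,1)$ and $B:=x^{m}y^{n}=(m,n,0)$ for the endpoints of the diagonal; they lie on the planes $0xz$ and $0xy$ respectively and are vertices of $S$. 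As every compact face of $\Gamma_{+}(g)$ has a normal vector with positive coordinates, the restriction of $\tmop{pr}_{y,z}$ to $\Gamma(g)$ is injective and maps each $2$-dimensional face onto a $2$-dimensional polygon, carrying vertices to vertices. In particular $\overline{S}$ is a polygon whose vertices include $(0,0)$ (the image of $x^{N}$), the point $(0,1)$ on the $z$-axis (the image of $A$) and the point $(n,0)$ on the $y$-axis (the image of $B$).

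The geometric heart of the argument is that $\overline{S}$ fills the whole quadrant angle at $(0,0)$. The segment $[(0,0),(0,1)]$ lies in $\{y=0\}$, hence on $\partial\overline{S}$, and it contains no vertex of $\overline{S}$ apart from its endpoints (a point strictly between two points of a convex set is not extreme); therefore it is an edge of $\overline{S}$. In the same way $[(0,0),(n,0)]$ is an edge of $\overline{S}$. Since a vertex of a convex polygon is incident to exactly two edges, these two coordinate segments are \emph{all} the edges of $\overline{S}$ at $(0,0)$, so the tangent cone of $\overline{S}$ at $(0,0)$ is the full quadrant $\{y\geqslant 0,\ z\geqslant 0\}$.

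Then uniqueness follows by contradiction: assume $S'\in\Gamma^{2}(g)$ is a proximity face for $0x$ with $S'\neq S$. By situation~\ref{it_dog} it too contains $x^{N}$, and $x^{N}$ is a vertex of $S'$: a face with positive normal vector meets $0x$ in a single point, and the $y$- and $z$-coordinates of the endpoints of any segment inside the face through that point would have to vanish, so that point cannot be interior to such a segment. Hence $(0,0)$ is a vertex of the $2$-dimensional polygon $\overline{S'}:=\tmop{pr}_{y,z}(S')\subset\{y\geqslant 0,\ z\geqslant 0\}$, so a small neighbourhood of $(0,0)$ in $\overline{S'}$ is a genuine two-dimensional set contained in the quadrant, hence in $\overline{S}$; thus $\overline{S}\cap\overline{S'}$ is two-dimensional. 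But $\tmop{pr}_{y,z}$ is injective on $S\cup S'\subset\Gamma(g)$, whence $\overline{S}\cap\overline{S'}=\tmop{pr}_{y,z}(S\cap S')$, while $S\cap S'$, being a common face of the polyhedron $\Gamma_{+}(g)$, has dimension at most $1$ — a contradiction. Therefore $S$ is the unique proximity face for $0x$.

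The only genuinely delicate point is the identification of the two coordinate segments as precisely the edges of $\overline{S}$ at $(0,0)$: this is where the hypothesis on the diagonal enters — it ensures that both $(0,1)$ and $(n,0)$ are genuine vertices of $\overline{S}$ — and it is what forces $\overline{S}$ to swallow the entire corner. The properties of the projection $\tmop{pr}_{y,z}$ and the dimension count for $S\cap S'$ are routine facts about Newton polyhedra.
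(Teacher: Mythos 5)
Your proof is correct. The paper states this lemma without proof---it is one of the three facts whose ``elementary proofs'' it explicitly omits---so there is nothing in the source to compare against. Your argument is sound and complete: the assumed diagonal guarantees that $(0,1)$ and $(n,0)$ are genuine vertices of $\overline{S}=\tmop{pr}_{y,z}(S)$, which forces the two coordinate segments emanating from the origin to be the two edges of $\overline{S}$ at $(0,0)$, so $\overline{S}$ fills the whole corner of the quadrant; any second $0x$-proximity face $S'$ would share the vertex $x^N$ (this is situation~(a) of the paper's dichotomy, cited from Oleksik), have $x^N$ as an extreme point, and hence project to a $2$-dimensional polygon with a corner at $(0,0)$, giving a $2$-dimensional overlap with $\overline{S}$. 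Since $\tmop{pr}_{y,z}$ is injective on $S\cup S'$ (both have supporting normal vectors in $\mathbb{Q}^3_{>0}$, so two points with the same $(y,z)$ would violate the supporting inequality of one of the two faces), this overlap equals $\tmop{pr}_{y,z}(S\cap S')$, contradicting $\dim(S\cap S')\leqslant 1$. The auxiliary facts you invoke---that the restriction of $\tmop{pr}_{y,z}$ to a $2$-face is an affine bijection carrying vertices to vertices, that a vertex of a $2$-dimensional convex polygon is incident to exactly two edges, and that the intersection of two distinct faces of a polyhedron is a face of lower dimension---are all standard and correctly applied.
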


\begin{lemma}\label{le_geo}
Let $S$ be a proximate face for the axis $0\nospace x$.
Then the supporting plane of $S$ has the highest coordinate of intersection
with the axis $0\nospace x$ among all the $0\nospace x$-non-exceptional faces.
\end{lemma}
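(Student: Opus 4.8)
The plan is to exploit the characterization of proximity faces for $0x$ provided by cases \ref{it_dog} and \ref{it_niedog} above, and to compare $m(S)_x$ with $m(S')_x$ for an arbitrary $0x$-non-exceptional face $S'$. Concretely, I would first fix a normal vector $\mathbf{v}_S = (v_x, v_y, v_z)\in\mathbb{N}_+^3$ of $S$ with supporting value $l_S$, so that the supporting plane $\Pi_S$ meets $0x$ at distance $m(S)_x = l_S/v_x$; for a competing face $S'$ with normal $\mathbf{v}_{S'}$ and supporting value $l_{S'}$, the quantity to bound from above is $m(S')_x = l_{S'}/v_{S',x}$. Since $\Pi_{S'}$ is a supporting hyperplane of $\Gamma_+(f)$, the number $m(S')_x$ is just the $x$-coordinate of the point where the boundary of $\Gamma_+(f)$, viewed ``from the $0x$ direction'', is met — more precisely, $m(S')_x = \sup\{t : t e_x \in \Gamma_+(f)\text{'s supporting half-space for }\mathbf{v}_{S'}\}$. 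The key geometric observation is that, because $S$ is proximate for $0x$, either $S$ actually contains a vertex $x^{m(S)_x}$ on the axis (case \ref{it_dog}), or $S$ has a vertex at distance $\le 1$ from $0x$ with the minimal-possible $y$-coordinate over all $0x$-non-exceptional faces (case \ref{it_niedog}).

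In case \ref{it_dog} I would argue as follows: the vertex $x^{m(S)_x}$ lies in $\Gamma_+(f)$, hence in $\Gamma(f)$, and it is the point of $\Gamma_+(f)\cap 0x$ closest to $0$. Any supporting hyperplane $\Pi_{S'}$ of $\Gamma_+(f)$ satisfies $\langle \mathbf{v}_{S'}, x^{m(S)_x}\rangle \ge l_{S'}$, i.e.\ $m(S)_x\cdot v_{S',x} \ge l_{S'}$, which rearranges to $m(S)_x \ge l_{S'}/v_{S',x} = m(S')_x$. That is exactly the claim, and it holds for \emph{every} face $S'$, not only the non-exceptional ones. In case \ref{it_niedog}, $S$ does not touch $0x$, so I instead use that $S$ has an edge joining $x^k z$ ($k\ge 1$) to $x^m y^n$ ($m\ge 0$, $n\ge 1$), and that $x^m y^n$ realizes the minimal $y$-coordinate in $\tmop{supp} f|_{0xy}$ among $0x$-non-exceptional faces. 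I would prolong the supporting plane $\Pi_{S'}$ of a competing $0x$-non-exceptional face $S'$ to the axis $0x$; tracking the edge of $S'$ that ``faces'' the axis $0x$, its endpoint in the plane $0xy$ has $y$-coordinate $\ge n$ by minimality, and the geometry of the prolongation (a similar-triangles / convexity computation in the half-plane $0xy$, using that the relevant edge descends toward $0x$) forces the $x$-intercept of $\Pi_{S'}$ to be $\le$ the $x$-intercept of $\Pi_S$. Here the hypothesis that $S'$ is \emph{non-exceptional} with respect to $0x$ is what guarantees $S'$ genuinely has such a downward-facing edge reaching toward the plane $0xy$ rather than degenerating into the pyramid configuration of Figure \ref{fig_excep}.

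The main obstacle I anticipate is the case analysis hidden in case \ref{it_niedog}: one must show that for \emph{any} $0x$-non-exceptional $S'$ the prolongation of $\Pi_{S'}$ to $0x$ cannot exceed $m(S)_x$, and the cleanest route is probably to reduce to a $2$-dimensional statement by projecting to the plane $0xy$ (the coordinate $z$ playing a passive role, since both $S$ and $S'$ have normal vectors with positive $x$- and $y$-components and the binding vertices lie in or near $0xy$), and then invoke the two-dimensional picture — essentially Lenarcik's analysis of non-exceptional segments — combined with the stated minimality of the $y$-coordinate of $x^m y^n$. Since the excerpt explicitly labels this among the ``elementary'' facts whose proofs are omitted, I would keep the write-up short: set up the two cases, dispatch case \ref{it_dog} by the one-line inequality above, and for case \ref{it_niedog} reduce to the planar configuration and cite the structural facts \ref{it_dog}–\ref{it_niedog} together with \cite[Lemma 3.1 and Property 3.3]{Ole13}.
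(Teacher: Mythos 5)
The paper omits its own proof of Lemma~\ref{le_geo}, so there is nothing to compare against; the question is whether your argument actually closes the gap. Your treatment of case \ref{it_dog} is correct, clean, and complete: once $S$ contains the lattice point $x^{m(S)_x}$ on the axis, the one--line inequality $v_{S',x}\, m(S)_x = \langle \mathbf{v}_{S'}, x^{m(S)_x}\rangle \geqslant l_{S'}$ gives $m(S)_x \geqslant m(S')_x$ for \emph{every} face $S'$, and nothing more is needed.

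Case \ref{it_niedog}, however, is where all the content of the lemma lives, and what you offer there is a plan with a genuine gap rather than a proof. Concretely: you reduce to the plane $0xy$ and invoke the minimality of the $y$-coordinate $n$ of the vertex $x^m y^n$, but the $x$-intercept $m(S')_x = l_{S'}/v_{S',x}$ of a competing face $S'$ is \emph{not} controlled by the $y$-coordinate of a single vertex alone. Writing $\mathbf{v}_{S'}=(1,\alpha',\beta')$ and using the two vertices $x^m y^n$, $x^k z$ of the edge of $S$, one only gets $m(S')_x \leqslant \min(m+\alpha' n,\; k+\beta')$, while $m(S)_x = m+\alpha n = k+\beta$ for $\mathbf{v}_S=(1,\alpha,\beta)$; to conclude you must rule out $\alpha'>\alpha$ and $\beta'>\beta$ simultaneously, and this requires a genuine $3$-dimensional argument about the normal fan of $\Gamma_+(f)$ (or an additional constraint coming from further vertices of $\Gamma_+(f)$). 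A projection to $0xy$ alone loses exactly the information on the $0xz$ side that is needed here; note your sketch never uses the vertex $x^k z$ nor the symmetric minimality on the $0xz$ side. There is also the unaddressed possibility that $S'$ meets neither $0xy$ nor $0xz$, so ``the endpoint of $S'$ in $0xy$'' need not exist, and the ``similar-triangles / convexity computation'' you allude to is never exhibited. Finally, you end by proposing to cite the structural facts and \cite{Ole13} rather than argue, which is an admission that case \ref{it_niedog} has not been proved. To make this case rigorous one should exploit \emph{both} edges of $S$ in $0xy$ and in $0xz$ simultaneously, together with the non-exceptionality of $S'$, rather than a single planar projection.
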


\begin{lemma}\label{proper_edge}
Let $S$ be a proximate face for the axis $0x$. 
Then no edges of $S$ have prolongations intersecting the axis $0 \nospace x$,
{except, perhaps, those having at least one vertex in one of the planes $0 \nospace x \nospace y$, $0 \nospace x
\nospace z$.}

\end{lemma}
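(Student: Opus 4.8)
The plan is to reduce this spatial statement to an elementary planar one by projecting $S$ along the $0x$-direction onto the coordinate plane $0yz$, exactly as in the proof of Lemma~\ref{le_degeneracy}. Since $S\in\Gamma^2(g)$ has a normal vector $\mathbf{v}_S\in\mathbb{Q}_{>0}^3$, the kernel direction $(1,0,0)$ of $\tmop{pr}_{y,z}$ is transverse to the supporting plane $\Pi_S$, so $\tmop{pr}_{y,z}|_{\Pi_S}$ is an affine isomorphism of $\Pi_S$ onto $0yz$. It sends $S$ to a $2$-dimensional convex polygon $\overline{S}\assign\tmop{pr}_{y,z}(S)\subset\mathbb{R}_+^2$, sends the edges of $S$ bijectively to the edges of $\overline{S}$, sends every line in $\Pi_S$ to a line in $0yz$, and sends the unique point $\Pi_S\cap 0x$ to the origin $O\assign(0,0)$. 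Consequently, the prolongation of an edge $T$ of $S$ meets $0x$ if and only if the prolongation of the corresponding edge $\overline{T}$ of $\overline{S}$ passes through $O$, and $T$ has a vertex in $0xy\cup 0xz$ if and only if $\overline{T}$ has a vertex on one of the coordinate axes of $0yz$. Finally, a proximate face touches both planes $0xy$ and $0xz$, so $\overline{S}$ touches both coordinate axes.

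It therefore suffices to prove the planar fact: \emph{if $\overline{S}\subset\mathbb{R}_+^2$ is a $2$-dimensional convex polygon touching both coordinate axes, then every edge $\overline{T}$ of $\overline{S}$ whose prolongation passes through $O$ has a vertex on a coordinate axis.} The key observation is that the line $\ell$ through $\overline{T}$ is a supporting line of $\overline{S}$, so $\overline{S}$ lies in one of the two closed half-planes bounded by $\ell$, and $\ell\cap\overline{S}=\overline{T}$. I would then distinguish two cases. If $O\in\overline{S}$, then $O$ is an extreme point of $\overline{S}$ (since $\overline{S}\subset\mathbb{R}_+^2$ and $O$ is the apex of that quadrant), hence a vertex of the polygon; as $O\in\ell\cap\overline{S}=\overline{T}$ and $O$ is a vertex, $O$ must be an endpoint of $\overline{T}$, and $O$ lies on both axes, so we are done. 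If $O\notin\overline{S}$, write $\ell=\mathbb{R}d$; after possibly replacing $d$ by $-d$ the endpoints of $\overline{T}$ are $ad$ and $bd$ with $0<a<b$, whence $d\in\mathbb{R}_+^2$. If $d$ has a zero coordinate, then $\ell$ is a coordinate axis and both endpoints of $\overline{T}$ lie on it. If both coordinates of $d$ are positive, then $\ell$ runs into the open first quadrant, and each of the two closed half-planes it bounds meets at least one of the positive coordinate semi-axes only in $O$; so the assumption that $\overline{S}$ touches both axes would force $O\in\overline{S}$, a contradiction.

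I do not expect a real obstacle here --- the lemma is of the ``elementary'' kind the authors allude to. The only points needing a little care are the affine-isomorphism reduction, which is completely parallel to the one already carried out before Lemma~\ref{le_degeneracy}, and the observation that a line through the origin into the open first quadrant separates the two positive semi-axes. Incidentally, the argument uses only that $S$ touches the two coordinate planes through $0x$, and it actually gives the slightly stronger conclusion that each such edge $\overline{T}$ either has $O$ as a vertex or lies entirely on a coordinate axis.
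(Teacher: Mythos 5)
Your proof is correct. The paper explicitly omits proofs of Lemmas~\ref{unique_convenient}--\ref{proper_edge}, describing them as elementary, so there is no written proof to compare against; but your reduction via $\tmop{pr}_{y,z}$ to a two-case planar argument is exactly in the spirit of the device the authors already introduce before the proof of Lemma~\ref{le_degeneracy}, and the case split (if $O\in\overline{S}$, then $O$ is a vertex of $\overline{T}$; if $O\notin\overline{S}$, then the line of $\overline{T}$ must be a coordinate axis, since a line through $O$ into the open first quadrant would separate $\overline{S}$ from one of the two axes it is required to touch) is complete. As you note, only the ``touches both coordinate planes containing $0\nospace x$'' clause of Definition~\ref{de_prox} is actually used, which is consistent with the conclusion being a purely geometric statement about $S$ and the axis.
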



\section{Existence of parametrizations for a proximity face}

Here, we indicate a way to extend the main result of section \ref{Wide-kreska} also to the case of ``zero mixed volume'',
but only for proximity faces (as their geometry is ``good'', in the sense of Theorem \ref{prop} above). 
Although one cannot claim now that there always exists a parametrization of the type postulated in Theorem \ref{tw_par}, it turns out that, nevertheless, there exists a parametrization of a branch of the polar curve $\{ \frac{\partial
  f}{\partial y} = 0, \frac{\partial f}{\partial z} = 0\}$ whose initial exponent vector
 supports a certain, rather special, subface of the chosen $0 x$-proximate face. Precisely, we have the following

\begin{theorem}\label{key}Let $f : (\mathbb{C}^3, 0) \rightarrow (\mathbb{C}, 0)$ be an isolated singularity and $S$ be its proximity face for $0x.$
  Assume, moreover, that $\tmop{supp} f=\Gamma^0 (f)$ and $f$ has generic coefficients. Then there exists a parametrization $\Phi =
  (\varphi_1, \varphi_2, \varphi_3) \in \mathbb{C} \{t\}^3$ with $\Phi(0)=0$ and $\varphi_1 \neq 0$, such that 
\begin{equation}\label{zero}\frac{\partial f}{\partial y}\circ\Phi=\frac{\partial f}{\partial z}\circ\Phi = 0
\end{equation}
and its initial exponent vector is supporting either along $S$
or along an edge of $S$ lying in one of the planes $0x \nospace y$, $0x \nospace z$, or along
the vertex of $S$ on the axis $0x$.
\end{theorem}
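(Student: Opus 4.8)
The strategy is to reduce Theorem~\ref{key} to the already-established Theorem~\ref{prop} by a careful case analysis on the geometry of the proximity face $S$, as catalogued in Section~\ref{geometry}. Recall that $S$ being $0x$-proximate means $S$ is $0x$-non-exceptional, touches both planes $0xy$ and $0xz$, and has a vertex at distance $\leqslant 1$ from $0x$. We split according to whether $S$ is convenient with respect to $0x$ (case~\ref{it_dog}) or not (case~\ref{it_niedog}). In the non-convenient case, $S$ is unique and, by Lemma~\ref{proper_edge}, no edge of $S$ has a prolongation meeting $0x$ except possibly edges with a vertex in $0xy\cup 0xz$; together with the standing hypotheses $\tmop{supp} f=\Gamma^0(f)$ and genericity of coefficients, and provided $\tmop{MV}(\tfrac{\partial\overline{f_S}}{\partial y},\tfrac{\partial\overline{f_S}}{\partial z})>0$, Theorem~\ref{prop} directly yields a parametrization of a branch of $\{\tfrac{\partial f}{\partial y}=\tfrac{\partial f}{\partial z}=0\}$ with initial exponent vector perpendicular to $S$, which is the first alternative in the conclusion. (Note $\varphi_1\neq 0$ is automatic since a normal vector of a $2$-face has all coordinates positive, forcing $\tmop{ord}\varphi_1<\infty$.)

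\textbf{Handling the convenient case and zero mixed volume.} The genuinely new content is what to do when either $S$ is convenient with respect to $0x$, or when the mixed volume $\tmop{MV}(\tfrac{\partial\overline{f_S}}{\partial y},\tfrac{\partial\overline{f_S}}{\partial z})$ vanishes. When the mixed volume is positive, the non-convenient case is done above and the convenient case is handled by again invoking Theorem~\ref{prop}: by Lemma~\ref{proper_edge} the geometric hypothesis on prolongations of edges is satisfied, so we still get a parametrization perpendicular to all of $S$. So the crux is the vanishing mixed volume. By Lemma~\ref{lem}, $\tmop{MV}(\tfrac{\partial\overline{f_S}}{\partial y},\tfrac{\partial\overline{f_S}}{\partial z})=0$ forces $\mathcal{N}(\tfrac{\partial\overline{f_S}}{\partial y})$ and $\mathcal{N}(\tfrac{\partial\overline{f_S}}{\partial z})$ to be parallel segments (or one of them a point). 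Because $\tmop{supp} f_S=\Gamma^0(f_S)$, $\overline{f_S}$ has very few monomials; I would enumerate the possible shapes of $S$ (a triangle, or a quadrilateral with its diagonal forced by the vertex-set condition) and the location of the vertex near $0x$. In each such configuration one identifies a distinguished subface $T$ of $S$ — an edge of $S$ contained in $0xy$ or $0xz$, or the vertex of $S$ lying on $0x$ (when $S$ is convenient with a vertex $x^{m(S)_x}$ on the axis) — such that the reduced polynomials $\tfrac{\partial\overline{f_T}}{\partial y}$, $\tfrac{\partial\overline{f_T}}{\partial z}$ do have a common zero in $(\mathbb{C}^\ast)^2$, equivalently such that one can apply Theorem~\ref{tw_par} or Lemma~\ref{lem_pom} to a face along which the local mixed volume is positive. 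Then Theorem~\ref{prop} (or directly Theorem~\ref{tw_par}), applied with the supporting vector $\mathbf{v}$ of $T$ rather than that of $S$, produces the desired parametrization whose initial exponent vector supports $T$.

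\textbf{The main obstacle.} The delicate point is verifying that in \emph{every} configuration with $\tmop{MV}=0$ a usable subface $T$ actually exists and that the relevant initial system $\{(\tfrac{\partial\overline{f_S}}{\partial y})_{\mathbf{w}}=(\tfrac{\partial\overline{f_S}}{\partial z})_{\mathbf{w}}=0\}$ for $\mathbf{w}$ the projection of $\mathbf{v}$ has a solution in $(\mathbb{C}^\ast)^2$ — this is precisely where the example preceding Theorem~\ref{prop} shows one must be careful, since Kushnirenko non-degeneracy alone does not guarantee it. I expect to lean on two facts: first, the explicit description of non-convenient and convenient proximity faces in parts~\ref{it_dog},~\ref{it_niedog} of Section~\ref{geometry} (the edge joining $x^k z$ to $x^m y^n$, and the shared vertex $x^{m(S)_x}$), which pins down the binomials $\overline{f_T}$ to the two normal forms $y^l z^m(\alpha y^a+\beta z^b)$ and $y^l z^m(\alpha+\beta y^a z^b)$ analysed in the proof of Lemma~\ref{lem_pom}; and second, the genericity of the coefficients of $f$, which rules out accidental coincidences among coefficients. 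The bookkeeping — keeping straight which planes $T$ meets, whether $T$ is an edge or a diagonal of $S$, and checking the prolongation of $T$ does not meet $0x$ so that the $(\mathbb{C}^\ast)^2$-solution persists — is the part that requires genuine care rather than routine computation. Once the subface $T$ and its supporting vector are produced, the existence of $\Phi$ follows verbatim from the Maurer-theorem argument used in the proof of Theorem~\ref{tw_par}, and the three stated alternatives for the initial exponent vector correspond exactly to $T=S$, $T$ an edge in $0xy$ or $0xz$, and $T$ the vertex of $S$ on $0x$.
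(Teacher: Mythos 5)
Your reduction of the positive mixed-volume cases to Theorem~\ref{prop} is sound: Lemma~\ref{proper_edge} indeed supplies the edge-prolongation hypothesis, and with $\tmop{supp} f=\Gamma^0(f)$ and generic coefficients, Theorem~\ref{prop} (via Lemma~\ref{lem_pom}) yields a branch whose initial exponent vector is perpendicular to $S$, covering the first alternative. For the $0x$-non-convenient case this is exactly the paper's step~(I.a); for the $0x$-convenient case with $\mathrm{MV}>0$ you are, if anything, taking a slightly more direct route than the paper (the paper instead always passes through its vertex-removal device in the convenient case).

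The genuine gap is the zero mixed-volume case, and this is where your sketch does not actually constitute a proof and where the mechanism you propose would fail. You suggest locating a lower-dimensional subface $T$ of $S$ (an edge in $0xy$ or $0xz$, or the vertex $x^k$) and ``applying Theorem~\ref{prop} (or Theorem~\ref{tw_par}), with the supporting vector of $T$ rather than that of $S$.'' But those theorems are formulated for normal vectors of $2$-dimensional faces and rely on both derivatives $\frac{\partial f_S}{\partial y}$, $\frac{\partial f_S}{\partial z}$ being nonzero and on the Bernstein theorem for a system of two equations in two unknowns. If $T$ is an edge contained in $0xz$, then $f_T$ is a binomial in $x,z$ and $\frac{\partial f_T}{\partial y}\equiv 0$, so the corresponding system is not of Bernstein type at all; you cannot ``apply the theorem with $\mathbf{v}(T)$.'' The paper's actual fix in step~(I.b) is qualitatively different: after pinning $S$ down as a triangle $\mathcal{N}(x^m y^n + x^k z + x^p z^q)$, it perturbs the weight vector by \emph{increasing} the $y$-weight just until the initial $\tilde{\mathbf w}$-part of $\frac{\partial f}{\partial y}$ ceases to be a monomial (while $(\frac{\partial f}{\partial z})_{\tilde{\mathbf w}}$ is unchanged), so that Lemma~\ref{ret} plus the Maurer theorem can be invoked directly for $\tilde{\mathbf w}$ — which supports the edge in $0xz$, giving the stated alternative — and it falls back to an explicit two-variable Newton–Puiseux parametrization when $\frac{\partial f}{\partial y}(x,0,z)\equiv 0$. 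You also leave unaddressed the convenient case with $\mathrm{MV}=0$; the paper handles all of Part~II by studying $f^\ast=f-ex^k$ (distinguishing whether $f^\ast$ is an isolated singularity, then recursing to Part~I for $f^\ast$, and finally comparing the position of $x^k$ with the resulting supporting plane to decide which of the three alternatives occurs). These two ideas — the weight-vector perturbation and the vertex-removal recursion — are the real content of the proof, and neither is present in your proposal.
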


\begin{remark*}Naturally, if $\varphi_i=0$, then the $i$-th coordinate of the initial exponent vector $\mathbf{v}$ of $\Phi$ is understood to be equal to $+\infty$. For convenience, we also define $l_\mathbf{v}$, $S_\mathbf{v}$, $\Pi_\mathbf{v}$
for such vectors, in a natural way, using the usual conventions for calculations involving infinity.
{
Equivalently, one may replace the infinite coordinates of $\mathbf{v}$ by a big enough $N \in \mathbb{N}$ to get the vector 
$\mathbf{\tilde{v}} \in \mathbb{Q}^3$; then, $l_\mathbf{v}=l_\mathbf{\tilde{v}}$, $S_\mathbf{v}=S_\mathbf{\tilde{v}}$, $\Pi_
\mathbf{v}=\Pi_\mathbf{\tilde{v}}$.}

\end{remark*}

\begin{proof} We split the proof into two main parts, cases to be treated separately.

\paragraph{\tmstrong{Part I.}}\label{pI}{\tmstrong{$S$ is $0x$-non-convenient.}} We consider two subcases
\paragraph{\tmtextbf{a)}}\label{re_I}{\tmstrong{$\tmop{MV} (\frac{\partial\overline{f_S}}{\partial y}, \frac{\partial
\overline{f_S}}{\partial z}) > 0$}}. \nopagebreak \hspace{1em} Here, by Lemma \ref{proper_edge}, we may apply Theorem \ref{prop} to find a parametrization $0 \neq \Phi \in \mathbb{C} \{t\}^3$ satisfying (\ref{zero}), whose initial exponent vector is perpendicular to $S$ so it supports $\Gamma_+(f)$ along $S$. 

\paragraph{\tmtextbf{b)}}{\tmstrong{$\tmop{MV} (\frac{\partial\overline{f_S}}{\partial y}, \frac{\partial \overline{f_S}}{\partial z}) = 0$}}.{\nopagebreak} \hspace{1em} In this situation, by Lemma \ref{lem}, $\mathcal{N} (\frac{\partial
\overline{f_S}}{\partial y})$ and $\mathcal{N} (\frac{\partial
\overline{f_S}}{\partial z})$ are either parallel segments or at least one of
these polygons reduces to a point. This implies certain restrictions on the
geometry of $S$. First, since a derivative can zero at most two vertices of
$S$, such $S$ cannot have more than four vertices. But ``four vertices''
implies that in the derivatives $\frac{\partial f_S}{\partial y}$,
$\frac{\partial f_S}{\partial z}$ there are segments at the least; and if this
happens, these segments either lie in different coordinate planes (if $S$ is
non-convenient for $0 \nospace x$) or they are derived from a common vertex of
$S$ (if $S$ is convenient for $0 \nospace x$); in either case, neither they
nor their projections can be parallel. This means $S$ has to be a
triangle.

\par Since $S$ is a non-convenient proximity face, the
triangle $S$ has an edge joining the monomials $x^k{}z$, where $k \geqslant 1$, and
$x^m y^n$, where $m \geqslant 0$, $n \geqslant 1$ (up to permutation of the variables $y$ and $z$
-- recall Figure \ref{fig_proximity} (b)).

We claim that the last vertex of $S$ lies in the plane $0 \nospace x
\nospace z$. Indeed, it cannot lie in $0 \nospace x \nospace y$ for $S$ is
{$0x$-non-exceptional}; and if it were situated outside of both planes $0 \nospace x
\nospace y$ and $0 \nospace x \nospace z$, then we would have $\tmop{MV}
(\frac{\partial \overline{f_S}}{\partial y}, \frac{\partial
\overline{f_S}}{\partial z}) > 0$. Thus, $S =\mathcal{N} (x^m y^n + x^k z +
x^p z^q)$. As $S$ is {$0x$-non-exceptional}, $n \geqslant 2$. Moreover, $q \geqslant 1$ since
$S$ is non-convenient with respect to $0 \nospace x$. Hence, we get $p < k$
and $q \geqslant 2$.

We may, thus, write $\frac{\partial \nospace f_S}{\partial \nospace y} = ax^m
y^{n - 1}$ and $\frac{\partial f_S}{\partial z} = bx^k + cx^p z^{q - 1}$,
where $abc \neq 0$, and
\[ \frac{\partial f}{\partial z} (x, 0, z) = bx^k + cx^p z^{q - 1} + \cdots .
\]
We will show that there exists the sought $\Phi$ with the property that its initial exponent vector
supports $\Gamma_+(f)$
along the edge of $S$ lying in plane $0x \nospace z$.
To this end, let $\mathbf{w} = (w_x, w_y, w_z) \in \mathbb{N}_+^3$ be a supporting vector
of $\Gamma_+(f)$ along the edge of $S$ joining $x^k z$ and $x^p z^q$.
To find such a vector $\mathbf{w}$, it is enough to start from a vector perpendicular to $S$ and then
increase its second coordinate (i.e.\ the one corresponding to $y$). If this increase is small enough,
then we may additionally assume that
$(\frac{\partial f}{\partial
y})_{\mathbf{w}} = a x^m y^{n-1}$.
 Thus,
 \[\left (\frac{\partial f}{\partial
y} \right )_{\mathbf{w}} = a x^m y^{n-1}, \quad \left ( \frac{\partial f}{\partial
z}\right )_{\mathbf{w}} = bx^k + cx^p z^{q - 1}.\]

Letting $w_y \rightarrow \infty$
(and keeping $w_x$ and $w_z$ fixed), we may arrive at two cases.			 

\medskip First, if $\frac{\partial f}{\partial y} (x, 0, z) \nequiv 0$, then, since $n
\geqslant 2$, there exists a weight vector $\tilde{\mathbf{w}} = (w_x,
\tilde{w}_y, w_z)$ with $\tilde{w}_y > w_y$, for which $(\frac{\partial
f}{\partial y})_{\tilde{\mathbf{w}}} = ax^m y^{n - 1} + dx^r y^s z^t +
\left( \text{{\tmem{other possible terms, of }}} y \text{{\tmem{-degree less
than }}} n - 1 \right)$, where $d \neq 0$ and $s < n - 1$. This restriction on
the supported terms follows from the fact that any monomial from $\tmop{supp}
(\frac{\partial f}{\partial y})$ with $y$-degree $\geqslant n - 1$ is of weighted
$\tilde{\mathbf{w}}$-degree higher than the
$\tilde{\mathbf{w}}$-degree of $x^m y^{n - 1}$, just as was the case with
the $\mathbf{w}$-degree. On the other hand, we still have $(\frac{\partial
f}{\partial z})_{\tilde{\mathbf{w}}} = bx^k + cx^p z^{q - 1}$.
Consequently, as $s \neq n - 1$, if $\mathcal{N} ((\frac{\partial f}{\partial
y})_{\tilde{\mathbf{w}}})$ happens to be just the segment $\mathcal{N}
(x^m y^{n - 1} + x^r y^s z^t)$, then the projected segments $\mathcal{N}
\left( \overline{(\frac{\partial f}{\partial y})_{\tilde{\mathbf{w}}}}
\right)$ and $\mathcal{N} \left( \overline{(\frac{\partial f}{\partial
z})_{\tilde{\mathbf{w}}}} \right)$ are not parallel and, hence, have
positive mixed volume. Clearly, the same holds if $\mathcal{N}
((\frac{\partial f}{\partial y})_{\tilde{\mathbf{w}}})$ is a polygon.
Moreover, the pair of polynomials $\left( \overline{(\frac{\partial
f}{\partial y})_{\tilde{\mathbf{w}}}}, \overline{(\frac{\partial
f}{\partial z})_{\tilde{\mathbf{w}}}} \right)$ is
$\mathcal{B}$-non-degenerate because their coefficients originate from
{different vertices} of $\Gamma (f)$ and, as such, may be
considered generic.

By Bernstein theorem, the system $\left\{ \overline{(\frac{\partial
f}{\partial y})_{\tilde{\mathbf{w}}}} = \overline{(\frac{\partial
f}{\partial z})_{\tilde{\mathbf{w}}}} = 0 \right\}$ possesses $\tmop{MV}
\left( \overline{(\frac{\partial f}{\partial y})_{\tilde{\mathbf{w}}}},
\overline{(\frac{\partial f}{\partial z})_{\tilde{\mathbf{w}}}} \right) >
0$ solutions in $(\mathbb{C}^{\ast})^2$. Hence, $\left\{ (\frac{\partial
f}{\partial y})_{\tilde{\mathbf{w}}} = (\frac{\partial f}{\partial
z})_{\tilde{\mathbf{w}}} = 0 \right\}$ possesses solutions in
$(\mathbb{C}^{\ast})^3$. This means that in the ideal $((\frac{\partial
f}{\partial y})_{\tilde{\mathbf{w}}}, (\frac{\partial f}{\partial
z})_{\tilde{\mathbf{w}}}) \mathcal{O}_3$ there are no monomials. What is
more, using -- again -- the relative genericness of coefficients of the
polynomials $(\frac{\partial f}{\partial y})_{\tilde{\mathbf{w}}}$,
$(\frac{\partial f}{\partial z})_{\tilde{\mathbf{w}}}$, we infer that
these polynomials do not have any common factor except, possibly, a monomial.
Consequently, Lemma \ref{ret} implies there is no element $H \in
(\frac{\partial f}{\partial y}, \frac{\partial f}{\partial z}) \mathcal{O}_3$
such that $H_{\tilde{\mathbf{w}}}$ is a monomial. Thus, we may
apply Maurer theorem (Theorem \ref{mau}) to find a parametrization $\Phi (t) =
(\varphi_1 (t), \varphi_2 (t), \varphi_3 (t))$ of the curve $\{ \frac{\partial
f^{^{^{}}}}{\partial y} = \frac{\partial f}{\partial z} = 0\}$ whose initial exponent vector
is a multiple of $\tilde{\mathbf{w}}$ (hence, $\varphi_1 \neq 0$).
But, by its construction, $\tilde{\mathbf{w}}$ supports $\Gamma_+(f)$
along the edge of $S$ lying in plane $0xz$.

\medskip Second, if $\frac{\partial f}{\partial y} (x, 0, z) \equiv 0$, then let $\Psi
(t) = (\psi_1 (t), \psi_2 (t))$ be a parametrization of a branch of the curve
$\{ \frac{\partial \nospace f}{\partial \nospace z} (x, 0, z) = 0\}$
corresponding to the edge $T \assign \mathcal{N} (x^k + x^p z^{q - 1})$ of its
Newton polyhedron $\Gamma_+ (\frac{\partial \nospace f}{\partial \nospace z} (x, 0,
z))$ in $\mathbb{C}^2$. Put $\Phi (t) = (\psi_1 (t), 0, \psi_2 (t))$. Then $\psi_1(t) \neq 0$ and 
and the initial exponent vector of $\Phi$ supports $\Gamma_+(f)$
along the edge of $S$ lying in plane $0xz$.
This ends the proof of subcase \tmstrong{b)}. 
\medskip

\paragraph{\tmstrong{Part II.}}\label{P2}{\tmstrong{$S$ is $0x$-convenient.}} In this case, we search for $\Phi$
whose initial exponent vector supports $\Gamma_+(f)$						   
along a subface of $S$ having $x^k$ as a vertex.
We will analyze the behaviour of the series $f^{\ast} \assign f - e x^k$, $e \neq 0$, that arises from $f$ by removing the monomial $x^k$ from its support.

First, let $f^{\ast}$ be non-isolated. If $f^{\ast}$ is not nearly convenient with respect to $0x$, then we put $\Phi = (t,0,0)$ and its initial exponent vector $(1,+\infty,+\infty)$ supports $\Gamma_+(f)$ along the vertex $x^k$.
 Otherwise, there exists a vertex realizing the nearly convenience, say $x^lz$, where $l \geqslant 1$.
But then, from Proposition \ref{BrOl}, we infer that necessarily $\Gamma_+(f^{\ast})\cap 0xy = \varnothing$.
We shall indicate the suitable $\Phi$ in the form $\Phi=(\varphi_1,\varphi_2,0)$.
Clearly, $\varphi_3=0$ already gives $\frac{\partial f}{\partial y}\circ\Phi = 0$.
 Now, we shall choose $(\varphi_1,\varphi_2)$ for which $\frac{\partial f}{\partial z}\circ\Phi = 0$.
 From the above analysis, it follows that we may write $f(x,y,z)=zg(x,y) + z^2h(x,y,z) + e x^k$,
 where $g(0,0)=0$.
 Thus, $\frac{\partial f}{\partial z}\circ\Phi = g(\varphi_1,\varphi_2)$. But $g(x,y) =\theta_1 x^l +\theta_2 y^m +...$,
 where $\theta_1, \theta_2 \neq 0$, $m \geqslant 1$, because $f$ is nearly convenient with respect to the axis $0y$.
 By the Newton-Puiseux theorem, we may find $(\varphi_1,\varphi_2)$ with $\varphi_1 \neq 0$, 
 $\varphi_1(0)=\varphi_2(0)=0$ and $g(\varphi_1,\varphi_2)=0$.
 Then $\frac{\partial f}{\partial z}\circ\Phi = 0$ and
 the initial exponent vector $(\rm{ord}\, \varphi_1,\rm{ord}\, \varphi_2,+\infty)$
 of $\Phi$ supports $\Gamma_+(f)$ along the vertex $x^k$.

Now, let $f^{\ast}$ be an isolated singularity.
Suppose that $\Gamma^2 (f^{\ast})\setminus E_x(f^{\ast}) = \varnothing$.
Then, using Proposition \ref{characterization}, we infer that $f^{\ast}$ has an edge
$\mathcal{N}(x^{}y + z^{\alpha})$ (up to permutation of $(y,z)$ only),
for some $\alpha \geqslant 2$, and $E_x(f^{\ast})=E(f^{\ast})$. Hence, this edge is also an edge of a non-compact, parallel to $0y$, face of $f^{\ast}$. 
Therefore, it is an edge of $f$, too, and $\varnothing \neq E_x(f)=E(f)$. Then, again by Proposition \ref{characterization}, we get there is no $0x$-non-exceptional face of $f$, a contradiction.
Hence, $\Gamma^2 (f^{\ast})\setminus E_x(f^{\ast}) \neq \varnothing$. By Proposition \ref{exist_prox}, there exists an $0x$-proximate face $T$ of $f^{\ast}$, non-convenient with respect to this axis.
Clearly, we are in the position to apply part \tmstrong{I} of the proof to $f^{\ast}$ and $T$. This way, we find a
parametrization satisfying \eqref{zero} such that its initial exponent vector $\mathbf{v}$
{supports $\Gamma_+(f^{\ast})$ along $N$, where either $N=T$ or $N$ is an edge of $T$ lying in one of the planes $0xy$, $0xz$.}
 Take the supporting plane $\Pi=\Pi_\mathbf{v}$ of $\Gamma_+ (f^{\ast})$ {along $N$}. If $x^k$ lies above $\Pi$, then $x^k$ also lies above the
 plane spanned by $T$ 
 (because the affine hulls of $T$ and $N$ both intersect $0 x$ in the same single point);
 thus, $T$ happens to be an $0 x$-proximate face of $f$.
 But then, by Lemma \ref{unique_convenient}, $T$ is the unique $0 x$-proximate face of $f$ so $S=T$. 
 As $S$ is $0x$-convenient while $T$ is not, this is a contradiction. 
 
 Hence, $x^k$ lies on $\Pi$ or under it. If $x^k$ lies on $\Pi$ and $N=T$, then, by Lemma \ref{unique_convenient}, we get that $T$ and $x^k$ make the unique {$0 x$-proximate} face of $f$. 
 Therefore, $\tmop{conv} (T \cup x^k) = S$ and, by part \tmstrong{I} of the proof, the vector $\mathbf{v}$ supports $\Gamma_+(f)$ along $S$ in this case.
 If $x^k$ lies on $\Pi$ and $N$ were an edge of $T$ lying in one of the planes $0xy$ or $0xz$, then $N$ and $x^k$ would make an edge of $f$
 containing three points of $\tmop{supp} f$; contradiction with the assumption $\tmop{supp} f = \Gamma^0(f)$.
Finally, if $x^k$ lies under $\Pi$, then $\mathbf{v}$ supports $\Gamma_+(f)$ along the vertex $x^k$.
\end{proof}

\section{Proof of the main theorem}

Let, as in Theorem \ref{tw_main}, $f : (\mathbb{C}^3, 0) \rightarrow (\mathbb{C}, 0)$ be an isolated,
$\mathcal{K}\!\!$-non-degenerate singularity, $\Gamma (f)$ -- its Newton boundary,
and $\Gamma^2 (f) \setminus E (f) \neq \varnothing$. Recall that the set of
$\mathcal{K}\!\!$-non-degenerate holomorphic functions with a given finite support
is non-empty and Zariski open in the space of coefficients of the support (cf.
{\cite{Kou76}}, {\cite{Oka79}}); in particular, any such function with support
equal to $\Gamma^0 (f)$ is, by Proposition \ref{BrOl}, an isolated singularity
(and has the same Newton boundary, $\Gamma (f)$). This, together with Theorem
\ref{Brzost}, allows us to assume that $\tmop{supp} f = \Gamma^0 (f)$ and the
coefficients of $f$ are generic (for the monomials in $\tmop{supp} f$).
In particular, $f$ is a polynomial. 
Thus,
we may and will assume in the sequel that $f :
(\mathbb{C}^3, 0) \rightarrow (\mathbb{C}, 0)$ satisfies the following
conditions: \nopagebreak
\begin{enumerateroman}
  \item \label{ass_1}$f$ is an isolated singularity,
  
  \item $f$ is $\mathcal{K}\!\!$-non-degenerate,
  
  \item \label{ass_3}$\Gamma^2 (f)\setminus E(f) \neq \varnothing$,
  
  \item \label{ass_4}$\tmop{supp} f = \Gamma^0 (f)$,
  
  \item \label{ass_5}the coefficients of $f$ are generic.
\end{enumerateroman}
 

We want to prove the equality
\begin{equation}
  \mathcal{L}_0 (f) = \max_{S \in \Gamma^2 (f) \setminus E (f)} m (S) - 1.
  \label{1}
\end{equation}
Denote by $m (f)$ the maximum on the right-hand side of the above formula. The
inequality $\mathcal{L}_0 (f) \leqslant m (f) - 1$ was established by the
third-named author in {\cite{Ole13}}. We will prove the inverse inequality,
$\mathcal{L}_0 (f) \geqslant m (f) - 1$.

From formula {\eqref{wyk_par}}, to justify this inequality, it suffices to find an analytic path $0 \neq \Phi (t) =
(\varphi_1 (t), \varphi_2 (t), \varphi_3 (t)) : (\mathbb{C}, 0) \rightarrow
(\mathbb{C}^3, 0)$ for which
\begin{equation}
  \frac{\tmop{ord} (\nabla f \circ \Phi)}{\tmop{ord} \Phi} \geqslant m (f) - 1.
  \label{12}
\end{equation}

By restriction \eqref{ass_3}, every proximity face of $f$ is non-exceptional (see Proposition \ref{exist_prox}). Hence, by Lemma \ref{le_geo}, $m (f)=\max\left(m(T)_{\omega}\right)$, where $T$ runs through the set
of the faces proximate for the axes $0 \omega$,  $\omega \in \{ x, y, z \}$.
Fix any proximity face $S$; say, it is
proximate for the axis $0 \nospace x$.

Because of the reductions
{\eqref{ass_4}}-{\eqref{ass_5}} made above, we can use Theorem \ref{key}
to get a suitable $\Phi$ for $S.$ 
Let us call its initial exponent vector $\mathbf{v}$, with coordinates
{$(v_x, v_y, v_z) \in (\mathbb{N}_+ \cup \{+\infty\})^3$.}
We have $\varphi_1 \neq 0$ so $v_x=\tmop{ord} \varphi_1 \neq +\infty$.
What is more, $\mathbf{v}$ supports $\Gamma_+(f)$ along a subface of $S$
whose affine hull intersects the axis $0 x$. Hence,
 $m (S)_x = \frac{l}{v_x}$, where $l=l_\mathbf{v}$ is the
$\mathbf{v}$-order of $f$.
As $f$ is $\mathcal{K}\!\!$-non-degenerate, we get $\tmop{ord} ( \frac{\partial f}{\partial x} \circ
\Phi ) = \tmop{ord} (f \circ \Phi) - \tmop{ord} \varphi_1$.
Using these observations, we can write$\label{rachunek}${\nopagebreak}
\begin{align*}\label{rachunek}
  \mathcal{L}_0 (f) \geqslant \frac{\tmop{ord} (\nabla f \circ \Phi)}{\tmop{ord}
  \Phi} = \frac{\tmop{ord} \left( \frac{\partial f}{\partial x} \circ \Phi
  \right)}{\tmop{ord} \Phi}  \geqslant \frac{\tmop{ord} (f \circ \Phi) -
  \tmop{ord} \varphi_1}{\tmop{ord} \varphi_1} &\geqslant \frac{l}{v_x} - 1 = m (S)_x - 1.																			  
\end{align*}
As $S$ was an arbitrarily fixed proximity face, we infer that $\mathcal{L}_0 (f) \geqslant m(f)-1$.
This ends the proof of the theorem. \hspace*{\fill}\proofbox



\section{Concluding remarks.}

\hspace{0.75em}1.\hspace{1em} The main theorem, together with Comment \ref{com_2}, gives an effective method of
calculation of the \L ojasiewicz exponent of $\mathcal{K}\!\!$-non-degenerate surface
singularities. For instance, most singularities from Arnold's list are exactly such ones.
To calculate the \L ojasiewicz exponent of a $\mathcal{K}\!\!$-non-degenerate
surface singularity, it suffices to detect non-exceptional faces in the Newton boundary
$\Gamma(f)$.
If we do not find any, then we get an immediate answer (see Comment \ref{com_2} on page \pageref{com_2}).
If we do find some such faces, we must just compute their intersections (precisely,
the intersections of the planes containing them) with the coordinate axes.
It suffices, however,
to compute
these intersections using just one (arbitrarily chosen) proximaty face for each axis
(it does exist by Proposition \ref{exist_prox}).
Namely

\begin{corollary} \label{wnios1}
Let $f$ be a
  $\mathcal{K}\!\!$-non-degenerate isolated surface singularity possessing non-exceptional
  faces and $S_{x},S_{y},S_{z}$ be any
proximate faces for  axes $0x,0y,0z$, respectively. Then%
\[
\mathcal{L}(f)=\max(m(S_{x})_{x},m(S_{y})_{y},m(S_{z})_{z})-1.
\]

\end{corollary}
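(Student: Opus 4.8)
The plan is to deduce Corollary~\ref{wnios1} directly from Theorem~\ref{tw_main}, Lemma~\ref{le_geo}, and Proposition~\ref{exist_prox}; no new computation is needed. First I would check that the statement is meaningful: since $f$ possesses non-exceptional faces, $\Gamma^2(f)\setminus E(f)\neq\varnothing$, and because $E(f)=E_x(f)\cup E_y(f)\cup E_z(f)$ with the three sets pairwise disjoint (as noted in Section~\ref{geometry}), we have $\Gamma^2(f)\setminus E_\omega(f)\supseteq\Gamma^2(f)\setminus E(f)\neq\varnothing$ for every $\omega\in\{x,y,z\}$. Hence, by Proposition~\ref{exist_prox}, a proximity face $S_\omega$ for $0\omega$ does exist for each of the three axes, and -- again by Proposition~\ref{exist_prox} -- every proximity face is non-exceptional, so $S_\omega\in\Gamma^2(f)\setminus E(f)$. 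By Theorem~\ref{tw_main}, $\mathcal{L}_0(f)=m(f)-1$ where $m(f):=\max_{S\in\Gamma^2(f)\setminus E(f)} m(S)$, so it suffices to prove the equality
\[
m(f)=\max\bigl(m(S_x)_x,\,m(S_y)_y,\,m(S_z)_z\bigr).
\]

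For the inequality ``$\leqslant$'', I would fix an arbitrary $S\in\Gamma^2(f)\setminus E(f)$. Being non-exceptional, $S$ is non-exceptional with respect to each of the axes $0x$, $0y$, $0z$; hence Lemma~\ref{le_geo}, applied to the proximity face $S_\omega$, gives $m(S)_\omega\leqslant m(S_\omega)_\omega$ for every $\omega\in\{x,y,z\}$. Since $m(S)=\max_\omega m(S)_\omega$, this yields $m(S)\leqslant\max\bigl(m(S_x)_x,m(S_y)_y,m(S_z)_z\bigr)$, and taking the maximum over all such $S$ proves the bound; it also shows, incidentally, that the right-hand side is independent of the particular choices of $S_x,S_y,S_z$. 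For the reverse inequality ``$\geqslant$'', I would simply note that each $S_\omega$ belongs to $\Gamma^2(f)\setminus E(f)$, so $m(f)\geqslant m(S_\omega)=\max\bigl(m(S_\omega)_x,m(S_\omega)_y,m(S_\omega)_z\bigr)\geqslant m(S_\omega)_\omega$; taking the maximum over $\omega$ finishes this direction. Combining the two inequalities gives the displayed equality, whence the corollary.

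I do not expect a genuine obstacle. The only points needing a little care are: verifying that a proximity face exists for each of the three axes (which uses the disjointness $E_x(f)\cap E_y(f)=E_x(f)\cap E_z(f)=E_y(f)\cap E_z(f)=\varnothing$ together with $\Gamma^2(f)\setminus E(f)\neq\varnothing$), and, in the ``$\leqslant$'' step, invoking Lemma~\ref{le_geo} with the axis matching the index of the coordinate $m(S)_\omega$ being estimated -- which is legitimate precisely because a non-exceptional $2$-dimensional face is $0\omega$-non-exceptional for every $\omega$ simultaneously.
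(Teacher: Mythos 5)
Your proposal is correct and follows essentially the same route as the paper, which simply says ``Using Lemma~\ref{le_geo} and formula~\eqref{formula2}, we get the assertion''; you have merely supplied the routine details (existence of the three proximity faces via Proposition~\ref{exist_prox}, their non-exceptionality, and the two inequalities). One small simplification: the step $\Gamma^2(f)\setminus E_\omega(f)\supseteq\Gamma^2(f)\setminus E(f)\neq\varnothing$ follows from $E_\omega(f)\subseteq E(f)$ alone, without invoking the pairwise disjointness of the $E_\omega$'s.
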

\begin{proof}
Using Lemma \ref{le_geo} and formula \eqref{formula2}, we get the assertion.
\end{proof}

In particular, if non-exceptional faces of $\Gamma(f)$ touch all the axes, say
in the points $x^{m},y^{n},z^{k}$, then they are obviously proximate and
\[
\mathcal{L}(f)=\max(m,n,k)-1.
\]

\hspace{0.75em}2.\hspace{1em} An application of the main result to the problem of constancy of the {\L}ojasiewicz
exponent in non-degenerate deformations of surface singularities shall be the
subject of a next article.


\bigskip

{\affiliationone{Szymon Brzostowski, Tadeusz Krasi{\'n}ski, and Grzegorz Oleksik\\
Faculty~of~Mathematics and~Computer~Science\\
University of {\L}{\'o}d{\'z}\\
ul. Banacha 22, 90-238 {\L}{\'o}d{\'z}\\
Poland\\
{\email{szymon.brzostowski@wmii.uni.lodz.pl\\
tadeusz.krasinski@wmii.uni.lodz.pl\\
grzegorz.oleksik@wmii.uni.lodz.pl}}}}				 

\end{document}